\newcommand{\re}{{I\!\!R}}
\newcommand{\ren}{\re^N}
\newcommand{\dyle}{\displaystyle}
\newcommand{\dint}{\dyle\int}
\newcommand{\io}{\int\limits_{\O}}
\newcommand{\Div}{\text{div\,}}
\renewcommand{\a }{\alpha }
\renewcommand{\b }{\beta }
\newcommand{\D }{\Delta }
\newcommand{\e }{\varepsilon }
\renewcommand{\l }{\lambda }
\renewcommand{\L }{\Lambda }
\newcommand{\n }{\nabla }
\newcommand{\s }{\sigma }
\renewcommand{\O }{\Omega }
\newcommand{\cqd}{{\unskip\nobreak\hfil\penalty50
        \hskip2em\hbox{}\nobreak\hfil\mbox{\rule{1ex}{1ex} \qquad}
        \parfillskip=0pt \finalhyphendemerits=0\par\medskip}}
\newenvironment{pf}{\noindent{\sc Proof}.\enspace}{\rule{2mm}{2mm}\medskip}
\newtheorem{Theorem}{Theorem}[section]
\newtheorem{Definition}[Theorem]{Definition}
\newtheorem{Lemma}[Theorem]{Lemma}
\newtheorem{remarks}[Theorem]{Remarks}
\newtheorem{remark}[Theorem]{Remark}
\begin{document}
\title[The Fractional p-laplacian equations]{On  the Fractional p-laplacian equations with weight and general datum.}
\thanks{Work partially supported by Project MTM2013--40846-P, MINECO, Spain}
\author[B. Abdellaoui, A. Attar  \& R. Bentifour]{B. Abdellaoui, A. Attar  \& R. Bentifour}

\address{\hbox{\parbox{5.7in}{\medskip\noindent {Laboratoire d'Analyse Nonlin\'eaire et Math\'ematiques
Appliqu\'ees. \hfill \break\indent D\'epartement de
Math\'ematiques, Universit\'e Abou Bakr Belka\"{\i}d, Tlemcen,
\hfill\break\indent Tlemcen 13000, Algeria.\\[3pt]
        \em{E-mail addresses: }{\tt boumediene.abdellaoui@inv.uam.es, \tt ahm.attar@yahoo.fr, \tt rachidbentifour@gmail.com}.}}}}

\thanks{2010 {\it Mathematics Subject Classification: 49J35, 35A15, 35S15.}   \\
   \indent {\it Keywords: Weighted fractional Sobolev spaces, Nonlocal problems, entropy solution, weak Harnack inequality.}  }

\begin{abstract}
The aim of this paper is to study the following problem
$$
(P)
\left\{
\begin{array}{rcll}
(-\Delta)^s_{p, \beta} u &= & f(x,u) &\mbox{ in }\O,\\
 u & = & 0  \    \ &\mbox{ in }  \ren\setminus\O,
\end{array}
\right.
$$
where $\Omega$ is a smooth bounded domain of $\ren$ containing the origin,
 $$ (-\Delta)^s_{p,\beta}\, u(x):=P.V. \int_{\ren} \,\dfrac{|u(x)-u(y)|^{p-2}(u(x)-u(y))}{|x-y|^{N+ps}} \dfrac{dy}{|x|^\b|y|^\b},$$
with $0\le \beta<\frac{N-ps}{2} $, $1<p<N$, $s\in (0,1)$ and $ps<N$.

The main purpose of this work is to prove the existence of a weak solution under some hypotheses on $f$. In particular, we will consider two cases:
\begin{enumerate}
\item $f(x,\s)=f(x)$, in this case we prove the existence of a weak solution, that is in a suitable weighted fractional Sobolev spaces for all $f\in L^1(\Omega)$. In addition, if $f\gneq 0$, we show that problem $(P)$ has a unique entropy positive solution.
\item $f(x,\s)=\l \s^q +g(x),\: \s\ge 0$, in this case, according to the values of $\l$ and $q$, we get the largest class of data $g$ for which problem $(P)$ has a
    positive solution.
    \end{enumerate}

In the case where $f\gneq 0$, then the solution $u$ satisfies a suitable weak Harnack inequality.

\end{abstract}

\maketitle

\section{Introduction and motivations}\label{sec:s0}
We consider the following problem
\begin{equation}\label{eq:def}
\left\{
\begin{array}{rcll}
(-\Delta)^s_{p, \beta} u &= &f(x,u) & \mbox{ in }\O,\\
u &=& 0  & \mbox{ in }  \ren\setminus\O,
\end{array}
\right.
\end{equation}
where
$$ (-\Delta)^s_{p,\beta}\, u(x):=P.V. \int_{\ren} \,\dfrac{|u(x)-u(y)|^{p-2}(u(x)-u(y))}{|x-y|^{N+ps}} \dfrac{dy}{|x|^\b|y|^\b},$$
$\Omega$ is a smooth bounded domain containing the origin and $f$ belongs to a suitable Lebesgue space.

This class of operators appear in a natural way when dealing with the improved Hardy inequality, namely, for all $\phi\in \mathcal{C}^\infty_0(\Omega)$, we have
\begin{equation}\label{sara0}
G_{s,p}(\phi)\geq C
\int_{\Omega}\int_{\Omega}\frac{|v(x)-v(y)|^p}{|x-y|^{N+qs}}w(x)^{\frac{p}{2}}w(y)^{\frac{p}{2}}\,dx\,dy,
\end{equation}
where
$$
G_{s,p}(\phi)\equiv \dint_{\re^N}\dint_{\re^N}
\dfrac{|\phi(x)-\phi(y)|^{p}}{|x-y|^{N+ps}}dx dy -\Lambda_{N,p,s}
\dint_{\re^N} \dfrac{|\phi(x)|^p}{|x|^{ps}} dx,
$$
$\L_{N,p,s}$ is the optimal Hardy constant, $w(x)=|x|^{-\frac{N-ps}{p}}$ and $v(x)=\dfrac{\phi(x)}{w(x)}$. We refer
\cite{FLS}, \cite{APP} and \cite{AB} for a complete discussion about this fact.

In the same way, we can consider $(-\Delta)^s_{p, \beta}$ as an extension of the local operator $-\text{div}(|x|^{-\beta}|\n u|^{p-2}\n u)$. This last one
is strongly related to the classical Caffarelli-Khon-Nirenberg inequalities given in \cite{CKN} and it was deeply analyzed in the literature.
Notice that, as a consequence of the Caffarelli-Khon-Nirenberg inequalities, it is known that the  weight $|x|^{-\beta}$,
with $\b<N-p$, is an admissible weight in the sense that, if $u$ is a weak positive supersolution to problem
$$
-\Div(|x|^{-\beta}|\n u|^{p-2}\n u)=0,
$$
then it satisfies a weak Harnack inequality.

More precisely, there exists a positive constant $\kappa>1$ such
that for all $0<q<\kappa(p-1)$,
$$
\Big(\int_{B_{2\rho}(x_0)}u^q(x)|x|^{-p\beta} dx\Big)^{\frac 1q} \le
C\inf_{B_{\rho}(x_0)}u, $$ where $B_{2\rho}(x_0)\subset\subset\O$,
and $C>0$ depends only on $B$.

We refer to \cite{FB}, \cite{JTO} and the references therein for a complete discussion and the proof of the Harnack inequality and a generalization of admissible weights.

\

Our objective in this work is to analyze the properties of the operator $(-\Delta)^s_{p, \beta}$  and to get the existence of a solution, in a suitable sense, to problem \eqref{eq:def} for the largest class of the datum $f$.

\

  The case of $p-$laplacian equation is well known in the literature, we refer for example to \cite {BG} and \cite{BZ} where the authors proved the existence and the uniqueness of entropy solution for $L^1$ datum. The case of measure datum was treated in \cite{DM}, the existence of a renormalized solution is obtained.

The local case with weight was considered in \cite{AP0}. The authors proved the existence and the uniqueness of entropy solution for datum in $L^1$.

For the operator $(-\Delta)^s_{p,\beta}$, the case $p=2$ and $\beta=0$ was analyzed in \cite{LPPS} and \cite{KPU}. Using a duality argument, in the sense of Stampacchia, the authors were able to prove the existence of solution for any datum in $L^1$. A more general semilinear problem was considered in \cite{ABB}, where the existence and the uniqueness of the solution is studied.

The case $p\neq 2$ and $\beta=0$ was recently treated in \cite{KMS}. Based on some generalization of the Wolf potential theory, the authors succeeded to obtain the existence of a weak solution belonging to a suitable fractional Sobolev space.

In this paper we will treat the case $p\neq 2$ and $\beta>0$. The argument considered in \cite{KMS} seems to be complicated to be adapted to our case.

Our approach is more simple and it is based on a suitable choice of a test function's family and on some algebraic inequalities.

In the first part of the present paper, we will consider the case $f(x,\s)=f(x)$. We prove the existence of a weak solution that is in an appropriate fractional Sobolev space. More precisely we get the next existence result.
\begin{Theorem}\label{mainth}
Assume that $f\in L^1(\O)$, then problem \eqref{eq:def} has a weak solution $u$ such that
\begin{equation}\label{estimm}
\io\io\dfrac{|u(x)-u(y)|^{q}}{|x-y|^{N+qs_1}}\dfrac{1}{|x|^\b|y|^\b}dy
\ dx\le M\mbox{   for all
}q<\dfrac{N(p-1)}{N-s}, \mbox{  and for all
}s_1<s,
\end{equation}
and $T_k(u)\in W^{s,p}_{\beta, 0}(\O)$, for all $k>0$, where
\begin{gather*}\label{f-trun00}
T_k(a)=\left\{\begin{array}{cl}
a\,,&\hbox{ if }|a|\le k\,;\\[2mm]
k\frac{a}{|a|}\,,&\hbox{ if }|a|> k.
\end{array}\right.
\end{gather*}
If $p>2-\dfrac{s}{N}$, then $u\in W^{s_1,q}_{\beta, 0}(\O)$ for all $1\le q<\dfrac{N(p-1)}{N-s}, \mbox{  and for all
}s_1<s$.
\end{Theorem}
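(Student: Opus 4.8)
The plan is to obtain the solution as a limit of solutions $u_n$ to the truncated problems $(-\Delta)^s_{p,\beta} u_n = f_n$ in $\Omega$, $u_n = 0$ in $\ren\setminus\Omega$, where $f_n = T_n(f)$ (or a smooth mollification with $\|f_n\|_{L^1}\le\|f\|_{L^1}$). For each $n$, existence of $u_n$ in the weighted space $W^{s,p}_{\beta,0}(\Omega)$ follows from standard monotone-operator/variational arguments since $f_n\in L^\infty(\Omega)\subset (W^{s,p}_{\beta,0})'$. The core of the proof is a priori estimates independent of $n$, obtained by testing against a carefully chosen family of functions. First I would test with $T_k(u_n)$: using the elementary inequality $|a-b|^{p-2}(a-b)(T_k(a)-T_k(b))\ge |T_k(a)-T_k(b)|^p$ one controls
$$
\io\io \frac{|T_k(u_n)(x)-T_k(u_n)(y)|^p}{|x-y|^{N+ps}}\frac{dx\,dy}{|x|^\b|y|^\b}\le k\|f\|_{L^1(\Omega)},
$$
which, via the weighted fractional Sobolev embedding, gives a bound on $T_k(u_n)$ in $W^{s,p}_{\beta,0}(\Omega)$ and hence (after a Chebyshev/Marcinkiewicz argument on the level sets $\{|u_n|>k\}$) bounds $u_n$ in a weighted Marcinkiewicz space. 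This yields, up to a subsequence, $u_n\to u$ a.e.\ and $T_k(u_n)\weakly T_k(u)$ weakly in $W^{s,p}_{\beta,0}(\Omega)$, proving the claim $T_k(u)\in W^{s,p}_{\beta,0}(\Omega)$.

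Next I would prove the gradient-type estimate \eqref{estimm}. The idea is to split, for fixed $q<\frac{N(p-1)}{N-s}$ and $s_1<s$, the double integral over the region where $|u_n(x)-u_n(y)|\le 1$ and its complement, or more efficiently to use the test function $\psi_\lambda(u_n)=1-(1+|u_n|)^{1-\lambda}$ with $\lambda>1$ chosen appropriately (in the spirit of Boccardo--Gallou\"et). Testing the equation with $\psi_\lambda(u_n)\operatorname{sign}(u_n)$ and using monotonicity of $\sigma\mapsto(1+|\sigma|)^{-\lambda}$ produces a bound of the form
$$
\io\io \frac{|u_n(x)-u_n(y)|^p}{|x-y|^{N+ps}}\,\frac{dx\,dy}{(1+|u_n(x)|)^\lambda + (1+|u_n(y)|)^\lambda}\,\frac{1}{|x|^\b|y|^\b}\le C\|f\|_{L^1}.
$$
Combining this weighted bound with the Marcinkiewicz estimate on $u_n$ and H\"older's inequality in the measure $|x-y|^{-N-qs_1}|x|^{-\b}|y|^{-\b}\,dx\,dy$ — distributing the weight $|x-y|^{-N-ps}$ against $|x-y|^{-N-qs_1}$ requires $s_1<s$ and the integrability threshold $q<\frac{N(p-1)}{N-s}$ — yields the uniform bound $M$ in \eqref{estimm}; passing to the limit by Fatou gives it for $u$.

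For the final assertion, when $p>2-\frac{s}{N}$ one checks that $\frac{N(p-1)}{N-s}>1$, so the exponent range in \eqref{estimm} includes values $q\ge 1$, and the bound \eqref{estimm} is precisely the statement that $u\in W^{s_1,q}_{\beta,0}(\Omega)$ (the zero exterior condition being inherited from $u_n$ and the a.e.\ convergence, together with the boundedness of the Gagliardo-type seminorm which for $q\ge1$ is a genuine norm). The remaining point is to verify that $u$ is indeed a weak solution, i.e.\ that one may pass to the limit in $\io\io \frac{|u_n(x)-u_n(y)|^{p-2}(u_n(x)-u_n(y))}{|x-y|^{N+ps}}\frac{(\phi(x)-\phi(y))}{|x|^\b|y|^\b}\,dx\,dy$ for $\phi\in\mathcal{C}_0^\infty(\Omega)$; this needs almost-everywhere convergence of the difference quotients $U_n(x,y):=u_n(x)-u_n(y)$ and equi-integrability of $|U_n|^{p-1}|x-y|^{-N-ps}|x|^{-\b}|y|^{-\b}$ on $\Omega\times\Omega$, which follows from \eqref{estimm} for a suitable $q>p-1$ (available exactly because $\frac{N(p-1)}{N-s}>p-1$) via Vitali.

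The step I expect to be the main obstacle is establishing the almost-everywhere convergence of $U_n(x,y)$ on the diagonal region of $\Omega\times\Omega$ — equivalently, a compactness statement upgrading the weak convergence $T_k(u_n)\weakly T_k(u)$ to strong convergence in the weighted Gagliardo seminorm, so that $(-\Delta)^s_{p,\beta}u_n$ converges to $(-\Delta)^s_{p,\beta}u$ in the distributional sense. This is the nonlocal, weighted analogue of the classical ``$\nabla u_n\to\nabla u$ a.e.'' argument of Boccardo--Murat--Puel; I would handle it by testing with $T_k(u_n-T_h(u_n))$ and with $T_k(u_n-u)$-type functions, exploiting the strict monotonicity inequality $\big(|a|^{p-2}a-|b|^{p-2}b\big)(a-b)\ge c\,|a-b|^p$ (for $p\ge2$) or its degenerate counterpart (for $1<p<2$), the singular weight being harmless away from the origin and absorbed near the origin by the constraint $\b<\frac{N-ps}{2}$ together with the weighted Hardy/Sobolev inequality \eqref{sara0}.
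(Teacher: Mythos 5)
Your strategy is in essence the paper's own: approximate with $f_n=T_n(f)$, test with $T_k(u_n)$ to bound the truncations in $W^{s,p}_{\beta,0}(\O)$ and deduce the Marcinkiewicz bound (Lemma \ref{one}), then test with $1-(1+u_n^{\pm})^{-\alpha}$ (your $\psi_\lambda$, $\lambda=\alpha+1$, is the same Boccardo--Gallou\"et family, and your intermediate inequality is the analogue of \eqref{hola} obtained from \eqref{alge3}), interpolate by H\"older to get \eqref{two00}, and pass to the limit by a.e.\ convergence plus Vitali. However, you skip precisely the point where the weight is not harmless. After H\"older, the complementary factor is of the form $\iint_{\O\times\O}(1+|u_n(x)|)^{\tau}\,|x-y|^{-(N-\theta)}|x|^{-\b}|y|^{-\b}\,dx\,dy$ with $\theta=\frac{pq(s-s_1)}{p-q}$, and the inner integral $\int_\O |x-y|^{-(N-\theta)}|y|^{-\b}\,dy$ is \emph{not} uniformly bounded: it behaves like $|x|^{\theta-\b}$ near the origin. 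The paper controls it through the one-dimensional kernel $K(\sigma)$ in \eqref{kkk}, which forces the choice \eqref{twos1} of $s_1$ close to $s$ (so that $\theta<\b$; smaller $s_1$ then follows since $\O$ is bounded), and the resulting weight $|x|^{\theta-2\b}$ is absorbed by the Marcinkiewicz bound in $d\mu=|x|^{-2\b}dx$, with $\tau=\frac{(\alpha+1)q}{p-q}<\frac{N(p-1)}{N-ps}$ attainable for small $\alpha$ exactly because $q<\frac{N(p-1)}{N-s}$. Saying ``distribute $|x-y|^{-N-ps}$ against $|x-y|^{-N-qs_1}$'' does not settle this; it is the main new technical content of Lemma \ref{two} compared with $\beta=0$ and must be supplied.

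The step you announce as the main obstacle is, in fact, not one, and the remedy you propose is neither needed nor available here. Since $U_n(x,y)$ is a pointwise function of $(u_n(x),u_n(y))$ --- unlike $\nabla u_n$ in the local theory --- its a.e.\ convergence on $\O\times\O$ follows at once from $u_n\to u$ a.e.\ in $\O$, which (up to a subsequence) is a consequence of the uniform bound \eqref{two00} and the compactness of the embedding of the corresponding weighted fractional space into $L^1$; no Boccardo--Murat--Puel-type argument and no strong convergence of $T_k(u_n)$ in $W^{s,p}_{\beta,0}(\O)$ is required. (That strong convergence is established in the paper only for $f\gneq 0$, by monotonicity, in Lemma \ref{compact}, and serves the entropy theory, not Theorem \ref{mainth}; for sign-changing $f$ your plan of testing with $T_k(u_n-u)$ would itself need justification.) What the limit passage really requires, and what the paper checks, is: (i) equi-integrability of $|U_n|$ against $\Phi(x,y)=\phi(x)-\phi(y)$ on $\O\times\O$, where $|\Phi(x,y)|\le C\min\{1,|x-y|\}$ softens the kernel near the diagonal so that H\"older with \eqref{two00} for some $q>p-1$ and $s_1$ close to $s$ applies (this is the content of Remark \ref{r01}); and (ii) the strips $\O\times(\ren\setminus\O)$ and $(\ren\setminus\O)\times\O$, where $|U_n(x,y)|=|u_n(x)|^{p-1}$ and one uses the strong convergence of $|u_n|^{p-2}u_n$ in the weighted Lebesgue space together with the boundedness of $|x-y|^{-N-ps}$ for $x\in\text{supp}\,\phi$, $y\notin\O$; your proposal does not treat these exterior regions. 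Finally, note that the a.e.\ convergence you assert right after the Marcinkiewicz step is premature: it only becomes available once \eqref{two00} is proved.
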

It is clear that for $\beta=0$, we reach the same existence and regularity result obtained in \cite{KMS}, however, it seems that our approach is more simple and can be adapted for a large class of weighted nonlocal operators.

\

Next, assuming that $f\ge 0$, we show the existence of positive entropy solution in the sense of Definition \ref{def:entropy}. The statement of our result is the following.
\begin{Theorem}\label{entropi}
Assume that $f\in L^1(\O)$ is such that $f\gneq 0$, then problem \eqref{eq:def}  has a unique entropy positive solution $u$ in the sense of Definition \ref{def:entropy} given below. Moreover if $u_n$ is the unique solution to the approximating problem
\begin{equation}\label{proOO}
\left\{\begin{array}{rcll}
(-\Delta)^s_{p, \beta}u_n &= & f_n(x) & \mbox{  in  }\O,\\
u_n &= & 0 & \mbox{ in } \ren\backslash\O,
\end{array}
\right.
\end{equation}
with $f_n=T_n(f)$, then $T_k(u_n)\to T_k(u)$ strongly in $W^{s,p}_{\beta, 0}(\Omega)$.
\end{Theorem}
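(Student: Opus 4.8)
The plan is to carry out the standard approximation/compactness scheme for entropy solutions, now in the weighted nonlocal setting, using Theorem \ref{mainth} as the source of a priori estimates. First I would consider the approximating problems \eqref{proOO} with $f_n = T_n(f)$; since $f_n\in L^\infty(\Omega)\subset L^1(\Omega)$ and $f_n\geq 0$, each has a (unique, by monotonicity of $(-\Delta)^s_{p,\beta}$) weak solution $u_n\in W^{s,p}_{\beta,0}(\Omega)$, and by the comparison principle $u_n\geq 0$ and $u_n$ is monotone nondecreasing in $n$. Next I would extract uniform estimates: testing the equation for $u_n$ with $T_k(u_n)$ gives a bound for $\iint |T_k(u_n)(x)-T_k(u_n)(y)|^p|x-y|^{-N-ps}|x|^{-\beta}|y|^{-\beta}\,dx\,dy$ by $k\|f\|_{L^1}$, and the argument behind \eqref{estimm} gives, uniformly in $n$, the bound $M$ on the $W^{s_1,q}_{\beta,0}$-type seminorms for $q<\frac{N(p-1)}{N-s}$ and $s_1<s$. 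This yields, up to a subsequence, $u_n\to u$ a.e. and in $L^{q}$ for the relevant exponents, with $T_k(u)\in W^{s,p}_{\beta,0}(\Omega)$ and the limit $u$ inheriting \eqref{estimm}; the monotonicity in fact upgrades a.e. convergence to convergence of the whole sequence.

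The heart of the matter is the strong convergence $T_k(u_n)\to T_k(u)$ in $W^{s,p}_{\beta,0}(\Omega)$, which is exactly what must be proved to pass to the limit in the nonlinear operator and to obtain the entropy formulation. The plan is to test the difference of the equations for $u_n$ and $u_m$ (or for $u_n$ and a smooth approximation of the limit) with $T_k(u_n-u)$ — more precisely, with a truncation like $T_k(u_n - T_j(u))$, handling the tail $\{|u|>j\}$ by the $L^1$ smallness of $f$ there (this is the usual device, e.g.\ in \cite{BG}, \cite{AP0}, \cite{KMS}). Using the elementary monotonicity inequality
\[
\bigl(|a|^{p-2}a-|b|^{p-2}b\bigr)(a-b)\geq c_p\,|a-b|^p \quad (p\geq 2),
\]
respectively the corresponding inequality for $1<p<2$ with the standard $(\,\cdot\,)^{p/2}$-Hölder correction, one extracts that the weighted Gagliardo seminorm of $T_k(u_n)-T_k(u)$ tends to zero. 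The weight $|x|^{-\beta}|y|^{-\beta}$ enters only as a fixed measurable factor and does not affect these pointwise algebraic inequalities; the condition $\beta<\frac{N-ps}{2}$ guarantees the relevant weighted fractional Sobolev embeddings and Hardy-type controls needed to make the truncation-and-tail estimates legitimate. Once strong convergence is established, passing to the limit in the weak formulation tested against $T_k(\varphi - u_n)$ for $\varphi\in W^{s,p}_{\beta,0}(\Omega)\cap L^\infty(\Omega)$ yields that $u$ is an entropy solution in the sense of Definition \ref{def:entropy}.

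Finally, uniqueness of the entropy solution follows by the by-now-classical argument: given two entropy solutions $u$ and $\hat u$, one uses $T_k(\hat u)$ as test function in the entropy inequality for $u$ and $T_k(u)$ in the one for $\hat u$, adds, and lets $k\to\infty$; the monotonicity of the operator forces the symmetrized term to control $\iint|(\hat u - u)(x) - (\hat u - u)(y)|^p\cdots$ from above by something tending to zero, whence $u=\hat u$ a.e. I expect the main obstacle to be the strong-convergence step: controlling the nonlocal ``tails'' of the truncated test function $T_k(u_n - T_j(u))$ against the weighted kernel — in the nonlocal setting the quantity $\iint$ couples the region where $|u_n|\leq k$ with the region where $|u_n|$ is large, so the cancellation used in the local case must be replaced by careful splitting of the domain of integration together with the uniform estimate \eqref{estimm}, exactly as in \cite{KMS} but now keeping track of the extra factor $|x|^{-\beta}|y|^{-\beta}$.
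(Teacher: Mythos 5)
Your overall skeleton (approximation with $f_n=T_n(f)$, monotonicity and comparison, a priori estimates from Lemma \ref{one}--\ref{two}, strong convergence of truncations, passage to the limit in the entropy inequality, uniqueness by doubling) matches the paper's, but at the two decisive steps your plan has genuine gaps. For the strong convergence $T_k(u_n)\to T_k(u)$ in $W^{s,p}_{\beta,0}(\O)$ you propose to test the difference of the equations for $u_n$ and $u_m$ (or for $u_n$ and $u$) with $T_k(u_n-u)$ or $T_k(u_n-T_j(u))$. Two problems: first, at this stage $u$ is only a distributional solution in the sense of Definition \ref{sense}, i.e.\ tested against $\mathcal{C}^\infty_0(\O)$, so you are not entitled to use $T_k(u_n-T_j(u))\in W^{s,p}_{\beta,0}\cap L^\infty$ as a test function in an ``equation for $u$'' (that admissibility is exactly the content of Lemma \ref{ness}, which in the paper is proved \emph{after} the entropy formulation is available — your plan is circular here); second, testing the difference of the equations for $u_n,u_m$ with $T_k(u_n-u_m)$ only controls $\iint(U_n-U_m)\bigl(T_k(u_n-u_m)(x)-T_k(u_n-u_m)(y)\bigr)d\nu$, and the pointwise inequality $(|a|^{p-2}a-|b|^{p-2}b)(a-b)\ge c_p|a-b|^p$ does not convert this into smallness of the Gagliardo seminorm of $T_k(u_n)-T_k(u_m)$, because the truncation acts on the difference $u_n-u_m$ and not on $u_n,u_m$ separately. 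The paper sidesteps all of this by exploiting the monotonicity you only use for a.e.\ convergence: since $f\ge 0$ and $u_n\uparrow u$, one has $\langle(-\Delta)^s_{p,\beta}u_n,\,T_k(u_n)-T_k(u)\rangle\le 0$, and a region-by-region sign analysis (the sets $D_1,\dots,D_4$ in Lemma \ref{compact}) yields $\limsup_n\iint|T_k(u_n)(x)-T_k(u_n)(y)|^p d\nu\le\iint|T_k(u)(x)-T_k(u)(y)|^p d\nu$, whence strong convergence by weak convergence plus norm convergence. If you do not use this monotone compactness lemma, you must supply a substitute argument of the Kuusi--Mingione--Sire type, which is precisely what the paper says it wants to avoid.

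The uniqueness step is also more than the ``by-now-classical'' local argument you invoke. In the nonlocal setting the kernel couples the region where one solution is below the truncation level with the region where it is large, and the whole difficulty is to show that these coupling terms are $o(h)$; this is why Definition \ref{def:entropy} carries the extra decay condition \eqref{entro001} and why the paper derives the auxiliary estimates \eqref{two1}--\eqref{two21}, which are then used in a lengthy case analysis ($P_2(h)\ge o(h)$, $Q_2(h)\ge o(h)$, $I_2(h)\ge o(h)$). Moreover, the paper's argument is not symmetric between two arbitrary entropy solutions: it first proves $u\le v$ by comparing the approximations $u_n$ with $v$, which needs Lemma \ref{ness} to use $(u_n-v)_+$ as a test function in the entropy formulation of $v$, and this ordering is then used in the doubling step (to replace $T_h(u)$ by $u$ on $\{v<h\}$). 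Your sketch attributes the domain-splitting difficulty to the strong-convergence step but not to uniqueness, and it omits both Lemma \ref{ness} and the role of \eqref{entro001}; as written, the uniqueness paragraph would not go through. Finally, even granting strong convergence, passing to the limit in the entropy inequality is nontrivial for $p>2$: the paper splits the integrand into $K_{1,n}+K_{2,n}$, uses Fatou on the signed part, and handles $K_{2,n}$ via a duality argument (weak convergence of $u_n^{p-2}\chi_{D_3}$ in $L^{\frac{p-1}{p-2}}(D_\O,d\nu)$ from Remark \ref{RR}); your plan does not address this term at all.
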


In the second part of the paper we consider the case $f(x,\s)=\l \s^q+f(x)$. According to the values of $q$ and $\l$, we prove the existence
of entropy solution for the largest class of the datum $f$.

Finally, for positive datum, we will show that the operator $(-\Delta)^s_{p,\beta}$ satisfies a suitable local Harnack inequality. This last result was proved in \cite{CKP} for $\beta=0$ and in \cite{AMPP2} for the case $p=2$ and $\beta>0$. Here combining the technics of the above papers, we will show the result for the nonlinear case $p\neq 2$ and $\beta>0$.

The paper is organized as follows. In Section  \ref{sec1} we
introduce some useful tools and preliminaries that we will use through the paper, like the weighted fractional Sobolev spaces and some related inequalities, a weak comparison principle and some algebraic inequalities. We also precise the sense in which the solutions to problem \eqref{eq:def} are defined.

In Section \ref{sec2}, we begin by proving Theorem \ref{mainth}, namely, the case where $f(x,\s)\equiv f(x)$. The main idea is to proceed by approximation and to pass to the limit using suitable test functions. In the second part of the section we prove Theorem \ref{entropi}, more precisely, if $f\ge 0$, we are able to show that the problem \eqref{eq:def} has {\bf a unique positive entropy solution}. In the same way, setting $u_n$ the solution of \eqref{eq:def} with datum $f_n\equiv T_n(f)$,  we will prove that the sequence $\{T_k(u_n)\}_n$ converges to $T_k(u)$ strongly in the corresponding weighted fractional Sobolev space.

In Section \ref{sec3}, we study the case where $f(x,\s)=\l \s^q+g(x)$, with $\l>0$ and $g\gneq 0$. According to the values of $q$ and $\l$, we get the largest class of the data $g$ such that the problem \eqref{eq:def} has a positive solution.

In the appendix, and following the argument used in
\cite{CKP} and \cite{AMPP2}, when the datum is positive, we are able to prove a weak version of the Harnack inequality for the operator $(-\Delta)^s_{p,\beta}$.

\section{Functional setting and main tools}\label{sec1}
In this section we give some functional settings that will be used
below. We refer to \cite{DPV} and \cite{MAZ} for more details.

Let $s\in (0,1)$, $p\ge 1$ and $0\le \beta<\frac{N-ps}{2}$. For simplicity of typing, we will set $$d\mu:=
\dfrac{dx}{|x|^{2\beta}} \quad \hbox{ and } \quad d\nu:=
\dfrac{dxdy}{|x-y|^{N+ps}|x|^\beta|y|^\beta}.$$

Let $\O\subset \ren$, the weighted fractional Sobolev
space $W^{s,p}_\beta(\Omega)$ is defined by
$$
W^{s,p}_\beta(\Omega)\equiv
\Big\{ \phi\in
L^p(\O,d\mu):\dint_{\O}\dint_{\O}|\phi(x)-\phi(y)|^pd\nu<+\infty\Big\}.
$$
$W^{s,p}_\b (\O)$ is a Banach space endowed with the norm
$$
\|\phi\|_{W^{s,p}_\b (\O)}=
\Big(\dint_{\O}|\phi(x)|^pd\mu\Big)^{\frac 1p}
+\Big(\dint_{\O}\dint_{\O}|\phi(x)-\phi(y)|^pd\nu\Big)^{\frac
1p}.
$$
In the same way we define the space $W^{s,p}_{\b,0} (\O)$ as
the completion of $\mathcal{C}^\infty_0(\O)$ with respect to the
previous norm.

As in \cite{Adams}, see also \cite{DPV}, we can prove the
following extension result.
\begin{Lemma}\label{ext}
Assume that $\Omega\subset \ren$ is a regular domain, then for all
$w\in W^{s,p}_\b (\O)$, there exists $\tilde{w}\in
W^{s,p}_\b (\ren)$ such that $\tilde{w}_{|\Omega}=w$ and
$$
||\tilde{w}||_{W^{s,p}_\b (\ren)}\le C
||w||_{W^{s,p}_\b (\O)},
$$
where $C\equiv C(N,s,p,\O)>0$.
\end{Lemma}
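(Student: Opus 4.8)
The plan is to mimic the classical Sobolev extension argument, reducing to a half-space model via a partition of unity and local flattening of the boundary, and then to build the extension on the half-space by a reflection that is continuous in the weighted seminorm. First I would use the regularity (Lipschitz, or smooth) of $\partial\O$ to cover $\overline\O$ by finitely many open sets $U_0, U_1,\dots,U_m$, with $\overline{U_0}\subset\O$ and each $U_j$ ($j\ge 1$) a neighborhood of a boundary patch on which a bi-Lipschitz change of variables $\Phi_j$ straightens $\partial\O$ to a piece of $\{x_N=0\}$, sending $U_j\cap\O$ into the upper half-space $\re^N_+$. Subordinate to this cover I fix a smooth partition of unity $\{\psi_j\}_{j=0}^m$ with $\sum_j\psi_j\equiv 1$ on a neighborhood of $\overline\O$, write $w=\sum_j \psi_j w$, and extend each piece separately: $\psi_0 w$ is already compactly supported in $\O$ and extends by zero, while for $j\ge 1$ I transport $\psi_j w$ to the half-space, extend there, and transport back by $\Phi_j^{-1}$.

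The model step is therefore: given $v\in W^{s,p}_\b(\re^N_+)$ supported in a bounded set, produce $\tilde v\in W^{s,p}_\b(\re^N)$ with $\tilde v|_{\re^N_+}=v$ and $\|\tilde v\|_{W^{s,p}_\b(\re^N)}\le C\|v\|_{W^{s,p}_\b(\re^N_+)}$. I would use the even reflection $\tilde v(x',x_N)=v(x',|x_N|)$. The $L^p(d\mu)$ bound is immediate since $|x|^{-2\b}$ is even in $x_N$, so the reflected mass doubles at worst. For the Gagliardo seminorm one splits $\re^N\times\re^N$ into the four quadrants according to the signs of $x_N$ and $y_N$: the $(+,+)$ piece is exactly the seminorm of $v$; the $(-,-)$ piece equals it by the change of variables $x_N\mapsto -x_N$, $y_N\mapsto -y_N$ (which also fixes $|x|,|y|$ and $|x-y|$); and for the mixed pieces, say $x_N>0>y_N$, one uses $|x-y|\ge |x-\bar y|$ where $\bar y=(y',-y_N)$ — so that the kernel $|x-y|^{-N-ps}$ is dominated by $|x-\bar y|^{-N-ps}$ — together with $|y|=|\bar y|$, reducing the mixed integral to (a constant times) the $(+,+)$ integral. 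This is the standard reflection bound; the only new wrinkle is carrying the weights $|x|^{-\b}|y|^{-\b}$ through, which is harmless precisely because reflection in the $x_N$-variable preserves Euclidean norms.

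The main obstacle, and the only place the hypothesis $0\le\b<\frac{N-ps}{2}$ is really used, is controlling the weight under the bi-Lipschitz maps $\Phi_j$ near the origin. If the origin lies on or near $\partial\O$ the change of variables distorts $|x|^{-\b}$, and one must check that $W^{s,p}_\b$ is stable under such diffeomorphisms; away from the origin the weight is bounded above and below and causes no trouble. I would handle this either by arranging the cover so that a single patch $U_{j_0}$ contains the origin and choosing $\Phi_{j_0}$ to be the identity in a neighborhood of $0$ (possible when $\partial\O$ is smooth and $0\in\O$, which is the setting of the paper — the origin is in the interior, so in fact $\psi_0$ can be taken to equal $1$ near $0$ and all boundary patches avoid the origin entirely), or, in general, by invoking the local equivalence of $\|\,\cdot\,\|_{W^{s,p}_\b}$ under Lipschitz changes of coordinates that fix the singular set, which follows from the boundedness of the Jacobian and the comparability $c|x-y|\le|\Phi(x)-\Phi(y)|\le C|x-y|$. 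Since the origin is interior to $\O$, the clean route is: take $U_0\ni 0$, extend $\psi_0 w$ by zero, and on every boundary patch the weights are smooth and bounded, so the classical fractional extension theorem (e.g. \cite{DPV}, \cite{Adams}) applies verbatim with the harmless extra bounded factor $|x|^{-\b}|y|^{-\b}$. Summing the finitely many pieces gives $\tilde w$ and the constant $C=C(N,s,p,\O)$. $\hfill\blacksquare$
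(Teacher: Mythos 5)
Your plan is correct and is essentially the argument the paper itself points to: the lemma is stated without proof, with only a reference to the classical extension results of \cite{Adams} and \cite{DPV}, i.e.\ exactly the partition-of-unity/boundary-flattening/even-reflection scheme you describe. Your additional observation that the weight $|x|^{-\beta}$ is singular only at the origin, which lies in the interior of $\Omega$, so that every boundary patch sees a weight bounded above and below and the unweighted fractional extension theorem applies there verbatim, is precisely the point that makes the adaptation of the classical proof routine.
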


The following weighted Sobolev inequality is obtained in \cite{AB}
and will be used systematically in this paper.
\begin{Theorem} \label{Sobolev}(Weighted fractional Sobolev inequality)
Assume that $0<s<1$ and $p>1$ are such that $ps<N$. Let
$\beta<\dfrac{N-ps}{2}$, then there exists a positive constant $S(N,s,\beta)$ such that for all
$v\in C_{0}^{\infty}(\ren)$,
$$
\dint_{\mathbb{R}^{N}}\dint_{\mathbb{R}^{N}}
\dfrac{|v(x)-v(y)|^{p}}{|x-y|^{N+ps}}\frac{dx}{|x|^{\beta}}\,
\frac{dy}{|y|^{\beta}}\geq S(N,s,\beta)
\Big(\dint_{\mathbb{R}^{N}}
\dfrac{|v(x)|^{p_{s}^{*}}}{|x|^{2\beta\frac{p_{s}^{*}}{p}}}\Big)^{\frac{p}{p^{*}_{s}}},
$$
where $p^{*}_{s}= \dfrac{pN}{N-ps}$.

Moreover, if $\Omega \subset \ren$ is a bounded domain, and
$\beta=\dfrac{N-ps}{2}$, then for all $q<p$, there exists a
positive constant $C(\Omega)$ such that
$$
\dint_{\mathbb{R}^{N}}\dint_{\mathbb{R}^{N}}
\dfrac{|v(x)-v(y)|^{p}}{|x-y|^{N+ps}}\frac{dx}{|x|^{\beta}}\,
\frac{dy}{|y|^{\beta}}\geq C(\Omega) \Big(\dint_{\mathbb{R}^{N}}
\dfrac{|v(x)|^{p_{s,q}^{*}}}{|x|^{2\beta\frac{p_{s,q}^{*}}{p}}}\Big)^{\frac{p}{p_{s,q}^{*}}},
$$
for all $v\in C_{0}^{\infty}(\Omega)$, where $p^{*}_{s,q}=
\dfrac{pN}{N-qs}$.
\end{Theorem}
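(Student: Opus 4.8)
The plan is to deduce both inequalities from the classical, unweighted, fractional Sobolev inequality on $\ren$ by means of a \emph{ground state substitution}, the resulting lower order term being reabsorbed through the sharp fractional Hardy inequality. For the first inequality, set $h(x):=|x|^{-\frac{2\beta}{p}}v(x)$; then $v(x)=|x|^{\frac{2\beta}{p}}h(x)$ and, since $|h|^{p^{*}_{s}}=|v|^{p^{*}_{s}}|x|^{-2\beta p^{*}_{s}/p}$, the right-hand side of the asserted inequality equals exactly $\|h\|_{L^{p^{*}_{s}}(\ren)}^{p}$. Writing $|x|^{\beta}|y|^{\beta}=\big(|x|^{\beta/p}|y|^{\beta/p}\big)^{p/2}$ and $t=t(x,y):=(|x|/|y|)^{\beta/p}$, the integrand on the left becomes $\big|\,t\,h(x)-t^{-1}h(y)\,\big|^{p}|x-y|^{-N-ps}$. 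For $p=2$ one has the exact identity
$$\big|\,t\,h(x)-t^{-1}h(y)\,\big|^{2}=|h(x)-h(y)|^{2}+(t^{2}-1)|h(x)|^{2}+(t^{-2}-1)|h(y)|^{2},$$
so that, integrating against $|x-y|^{-N-2s}\,dx\,dy$ and symmetrising in $x\leftrightarrow y$, the last two terms collapse (as convergent principal values) to a multiple $c_{N,s,\beta}\int_{\ren}|h|^{2}|x|^{-2s}\,dx$ of the fractional Hardy integral. For $p\neq2$ this is replaced by the elementary inequality $|a+b|^{p}\ge(1-\varepsilon)|a|^{p}-C_{\varepsilon}|b|^{p}$, applied symmetrically (together with $t^{p}+t^{-p}\ge2$) with $a=t^{\pm1}(h(x)-h(y))$ the difference part and $b=(t-t^{-1})h(\cdot)$ the weight part, after first discarding the region $\{\,|x-y|\gtrsim\min(|x|,|y|)\,\}$ on which the original integrand is nonnegative and may be dropped; this yields, for every $\varepsilon>0$,
$$\dint_{\ren}\dint_{\ren}\frac{|v(x)-v(y)|^{p}}{|x-y|^{N+ps}}\,\frac{dx}{|x|^{\beta}}\,\frac{dy}{|y|^{\beta}}\ \ge\ (1-\varepsilon)\,[h]_{s,p}^{p}-C_{N,p,s,\beta,\varepsilon}\dint_{\ren}\frac{|h(x)|^{p}}{|x|^{ps}}\,dx,\qquad [h]_{s,p}^{p}:=\dint_{\ren}\dint_{\ren}\frac{|h(x)-h(y)|^{p}}{|x-y|^{N+ps}}\,dx\,dy.$$

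Next I would reabsorb the remainder using the sharp fractional Hardy inequality $[h]_{s,p}^{p}\ge\Lambda_{N,p,s}\int_{\ren}|h|^{p}|x|^{-ps}\,dx$, with $\Lambda_{N,p,s}$ the optimal constant recalled in the Introduction: for $\varepsilon$ small enough the right-hand side above is bounded below by $\delta\,[h]_{s,p}^{p}$ with $\delta=\delta(N,p,s,\beta)>0$, since $|x|^{-\beta}$ is an admissible weight in this range (when $p=2$ one checks directly that $|c_{N,s,\beta}|<\Lambda_{N,2,s}$ throughout $0<\beta<\tfrac{N-2s}{2}$, the two constants coinciding only at the endpoint $\beta=\tfrac{N-2s}{2}$). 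In any case the restriction $\beta<\tfrac{N-ps}{2}$ is forced by the conclusion itself, being precisely the condition under which $|x|^{-2\beta p^{*}_{s}/p}$ is locally integrable, i.e.\ under which the right-hand side is finite for every $v\in C^{\infty}_{0}(\ren)$. It only remains to invoke the classical Gagliardo--Sobolev fractional inequality $[h]_{s,p}^{p}\ge S(N,s,p)\,\|h\|_{L^{p^{*}_{s}}(\ren)}^{p}$ (see \cite{DPV}); this gives the first inequality with $S(N,s,\beta)=\delta\,S(N,s,p)$.

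For the second inequality one is exactly at the borderline $\beta=\tfrac{N-ps}{2}$, where $2\beta p^{*}_{s}/p=N$ and the critical term degenerates. Performing the same substitution with this value of $\beta$, i.e.\ $h(x)=|x|^{-\frac{N-ps}{p}}v(x)$, turns the (finite) weighted seminorm of $v$ into the renormalised functional $G_{s,p}(h)=[h]_{s,p}^{p}-\Lambda_{N,p,s}\int_{\ren}|h|^{p}|x|^{-ps}\,dx$ of \eqref{sara0} --- exactly when $p=2$, and via the sharp form of the preceding inequality in general, both terms being infinite while their difference is finite --- so that no critical Sobolev gain survives. On the bounded set $\Omega$, however, the \emph{improved}, subcritical, fractional Hardy inequality gives $G_{s,p}(h)\ge C(\Omega)\,\|h\|_{L^{p^{*}_{s,q}}(\Omega)}^{p}$ for every $q<p$, with $p^{*}_{s,q}=\tfrac{pN}{N-qs}<p^{*}_{s}$; since $\|h\|_{L^{p^{*}_{s,q}}(\Omega)}^{p}=\big(\int_{\Omega}|v|^{p^{*}_{s,q}}|x|^{-2\beta p^{*}_{s,q}/p}\,dx\big)^{p/p^{*}_{s,q}}$, this is exactly the asserted inequality. (Alternatively, one may apply the first inequality with a slightly smaller exponent $\beta'<\tfrac{N-ps}{2}$ chosen as a function of $q$, and then use H\"older's inequality on the bounded set $\Omega$ to adjust the integrability exponent and the power weight, keeping the far-field part of the double integral under control by enlarging the reference ball.)

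The step I expect to be the main obstacle is making the ground state substitution \emph{quantitatively sharp} when $p\neq2$: the elementary bound $|a+b|^{p}\ge(1-\varepsilon)|a|^{p}-C_{\varepsilon}|b|^{p}$ is far too lossy when $\beta$ is close to the endpoint, and one needs instead a genuinely sharp pointwise inequality --- a $p$-analogue of the exact quadratic identity above, in the spirit of a Frank--Seiringer-type ground state representation for nonlocal functionals --- ensuring that the constant multiplying $\int_{\ren}|h|^{p}|x|^{-ps}\,dx$ never exceeds the sharp Hardy constant across the whole admissible range $0\le\beta<\tfrac{N-ps}{2}$. An equally delicate alternative is to prove the radial case directly by a one-dimensional Bliss/Hardy computation (where the range and the optimal constant appear transparently) and then drop the radial assumption via a weighted fractional P\'olya--Szeg\H{o} rearrangement inequality; the difficulty there is that the weight $|x|^{-\beta}|y|^{-\beta}$ is not a function of $|x-y|$ alone, so the standard Riesz rearrangement machinery does not apply without modification.
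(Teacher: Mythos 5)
First, a remark on the comparison you asked for: the paper contains no proof of Theorem \ref{Sobolev} at all --- it is imported verbatim from \cite{AB} (``The following weighted Sobolev inequality is obtained in \cite{AB}'') --- so your proposal has to stand on its own. For $p=2$ it does: the substitution $h=|x|^{-\beta}v$ with the exact quadratic expansion is precisely the Frank--Lieb--Seiringer ground-state representation, which gives the identity
\begin{equation*}
\dint_{\ren}\dint_{\ren}\frac{|v(x)-v(y)|^{2}}{|x-y|^{N+2s}}\frac{dx\,dy}{|x|^{\beta}|y|^{\beta}}
=[h]_{s,2}^{2}-\Lambda(\beta)\dint_{\ren}\frac{|h|^{2}}{|x|^{2s}}\,dx ,
\end{equation*}
with $\Lambda(\beta)<\Lambda_{N,2,s}$ strictly for $0\le\beta<\frac{N-2s}{2}$ (this strict inequality comes from the explicit formula in \cite{FLS}, not from a soft argument), after which absorption by the sharp Hardy inequality and the unweighted Sobolev inequality of \cite{DPV} closes the argument. (Minor slip: $|x|^{\beta}|y|^{\beta}=\bigl(|x|^{\beta/p}|y|^{\beta/p}\bigr)^{p}$, not the power $p/2$.)

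The genuine gap is exactly where you flag it, and it is not a peripheral difficulty: the case the present paper needs is $p\neq2$ (the weighted case $p=2$ is already in \cite{AMPP2}), and there your argument does not close. With $|a+b|^{p}\ge(1-\e)|a|^{p}-C_{\e}|b|^{p}$ the coefficient you produce in front of $\int|h|^{p}|x|^{-ps}\,dx$ is $C_{\e}$ times a $\beta$-dependent constant whose sharp value tends to $\Lambda_{N,p,s}$ as $\beta\uparrow\frac{N-ps}{2}$ (your own $p=2$ computation shows this); since $C_{\e}>1$ for every $\e>0$, no choice of $\e$ keeps the remainder strictly below the Hardy ceiling, so the absorption step fails on a whole range of admissible $\beta$, not just ``near the endpoint''. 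Nor can the fix you point to be simply invoked: the known nonlinear ground-state representation of Frank--Seiringer for $p\neq2$ yields an inequality in the \emph{opposite} direction for your purposes --- it bounds the weighted seminorm of the quotient from \emph{above} by the renormalized energy $G_{s,p}$ (this is exactly how \eqref{sara0} is obtained) --- whereas you need a lower bound for the weighted seminorm by $[h]_{s,p}^{p}$ minus a sub-sharp Hardy term. Producing such a sharp pointwise inequality is precisely the content of \cite{AB}, i.e.\ of the result you are trying to prove, so as written the argument is circular at its key step. The endpoint case inherits the same defect (the ``exact when $p=2$'' identity is again unavailable) and in addition assumes an improved subcritical Hardy--Sobolev inequality $G_{s,p}(h)\ge C(\Omega)\|h\|^{p}_{L^{p^{*}_{s,q}}}$ that you do not establish; of your two endpoint routes, the second one (apply the first inequality with some $\beta'<\frac{N-ps}{2}$ depending on $q$, then H\"older on the bounded $\Omega$) is the one that can be made to work, but it still needs the far-field part of the double integral ($x\in\Omega$, $|y|$ large) and a weighted Poincar\'e/Hardy bound to be written out, which you only gesture at.
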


\begin{remark}\label{equiv}
As in the case $\beta=0$, if $\O$ is a bounded smooth domain of $\ren$, we
can endow $W^{s,p}_{\beta, 0}(\O)$ with the equivalent norm
$$
|||\phi|||_{W^{s,p}_{\beta, 0}(\O)}=
\Big(\dint_{\O}\dint_{\O}\dfrac{|\phi(x)-\phi(y)|^p}{|x-y|^{N+ps}}\dfrac{dxdy}{|x|^\beta|y|^\beta}\Big)^{\frac
1p}.
$$
\end{remark}
Now, for $w\in W^{s,p}_{\beta}(\ren)$, we set
$$
(-\Delta)^s_{p, \beta} w(x)=\mbox{ P.V. }
\dint_{\ren}\dfrac{|w(x)-w(y)|^{p-2}(w(x)-w(y))}{|x-y|^{N+ps}}\dfrac{dy}{|x|^\beta|y|^\beta}.
$$
It is clear that for all $w, v\in W^{s,p}_{\beta}(\ren)$, we have
$$
\langle (-\Delta)^s_{p, \beta}w,v\rangle
=\dfrac 12\dint_{\ren}\dint_{\ren}\dfrac{|w(x)-w(y)|^{p-2}(w(x)-w(y))(v(x)-v(y))}{|x-y|^{N+ps}}\dfrac{dxdy}{|x|^\beta|y|^\beta}.
$$
In the case where $\beta=0$, we denote $(-\Delta)^s_{p, \beta}$ by
$(-\Delta)^s_{p}$.

The next Picone's inequality is obtained in \cite{LPPS} and \cite{AB}.
\begin{Theorem}{\it (Picone's type Inequality).}\label{Picone}
Let $w\in W^{s,p}_{\beta, 0}(\O)$ be such that $w>0$ in $\O$, and assume that
$(-\Delta)^s_{p, \beta}(w)\ge 0$. Then for all $v\in \mathcal{C}^\infty_0(\O)$ we have
\begin{equation}\label{picone1}
\frac {1}{2}
 \iint_{D_\O} |v(x)-v(y)|^pd\nu\ge
\langle (-\Delta)^s_{p, \beta}w,\frac{|v|^p}{w^{p-1}}\rangle,
\end{equation}
where $D_{\O}=(\ren\times\ren)\backslash (C\O\times C\O) $.
\end{Theorem}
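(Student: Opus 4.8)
The plan is to reduce \eqref{picone1} to the elementary pointwise (``discrete'') Picone inequality: for every $p>1$ and all real numbers $a,b\ge 0$, $A,B>0$,
\[
|A-B|^{p-2}(A-B)\Big(\frac{a^{p}}{A^{p-1}}-\frac{b^{p}}{B^{p-1}}\Big)\le |a-b|^{p},
\]
which follows from the convexity of $t\mapsto t^{p}$ together with Young's inequality and is the pointwise statement underlying the results of \cite{LPPS} and \cite{AB}. Since $w>0$ in $\O$ need not be bounded below near $\p\O$, I would first fix $\varepsilon>0$ and work with the regularized test function $\psi_{\varepsilon}:=|v|^{p}/(w+\varepsilon)^{p-1}$. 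Because $v\in\mathcal{C}^{\infty}_{0}(\O)$, $w\in W^{s,p}_{\beta,0}(\O)$, and $t\mapsto (t+\varepsilon)^{-(p-1)}$ is bounded and Lipschitz on $[0,+\infty)$, one checks that $\psi_{\varepsilon}$ is nonnegative, bounded, compactly supported in $\O$, and belongs to $W^{s,p}_{\beta,0}(\O)$; hence it is an admissible test function, and $(-\Delta)^{s}_{p,\beta}(w+\varepsilon)=(-\Delta)^{s}_{p,\beta}w\ge 0$.

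The next step is to apply the discrete Picone inequality pointwise with $A=w(x)+\varepsilon$, $B=w(y)+\varepsilon$ (so that $A-B=w(x)-w(y)$), $a=|v(x)|$, $b=|v(y)|$, and to combine it with $\big|\,|v(x)|-|v(y)|\,\big|^{p}\le|v(x)-v(y)|^{p}$, which gives, for a.e.\ $(x,y)$,
\[
|w(x)-w(y)|^{p-2}\big(w(x)-w(y)\big)\big(\psi_{\varepsilon}(x)-\psi_{\varepsilon}(y)\big)\le |v(x)-v(y)|^{p}.
\]
Integrating over $D_{\O}$ against $d\nu$ — the right-hand side is integrable there since $v$ is smooth, compactly supported in the bounded domain $\O$, and $|x|^{-\beta}|y|^{-\beta}$ is locally integrable — and exploiting the symmetry of $d\nu$ and of $D_{\O}$ under $x\leftrightarrow y$ (so that the two ``cross'' terms coincide after relabelling), together with the fact that $\psi_{\varepsilon}$ is supported inside $\O$ (so the integrand vanishes off $D_{\O}$), one recognises the left-hand integral as $2\langle(-\Delta)^{s}_{p,\beta}w,\psi_{\varepsilon}\rangle$. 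This yields $\langle(-\Delta)^{s}_{p,\beta}w,\psi_{\varepsilon}\rangle\le\tfrac12\iint_{D_{\O}}|v(x)-v(y)|^{p}\,d\nu$ for every $\varepsilon>0$.

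Finally I would let $\varepsilon\downarrow 0$. Then $\psi_{\varepsilon}\nearrow |v|^{p}/w^{p-1}$ pointwise; and since $\psi_{\varepsilon}\ge 0$ while $(-\Delta)^{s}_{p,\beta}w\ge 0$ in the weak sense and the associated functional is order preserving, the quantities $\langle(-\Delta)^{s}_{p,\beta}w,\psi_{\varepsilon}\rangle$ are nonnegative and nondecreasing as $\varepsilon\downarrow 0$, hence converge in $[0,+\infty]$ to what is, by definition, $\langle(-\Delta)^{s}_{p,\beta}w,|v|^{p}/w^{p-1}\rangle$; passing to the limit in the estimate above gives \eqref{picone1}. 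I expect this last limiting step to be the main obstacle: one must make sense of $\langle(-\Delta)^{s}_{p,\beta}w,|v|^{p}/w^{p-1}\rangle$ in spite of the possible degeneracy of $w$ at $\p\O$, and it is exactly here that the hypothesis $(-\Delta)^{s}_{p,\beta}w\ge 0$ enters, since it supplies the monotone nonnegative approximating sequence whose limit the previous estimate then forces to be finite; the regularization by $w+\varepsilon$ is what keeps $\psi_{\varepsilon}$ an honest element of $W^{s,p}_{\beta,0}(\O)$ at each stage, the only other nontrivial ingredients being the algebraic inequalities and the symmetrisation.
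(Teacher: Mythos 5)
The paper itself contains no proof of Theorem \ref{Picone}: it is imported from \cite{LPPS} and \cite{AB}. Your argument is essentially the standard one behind those references: the pointwise (discrete) Picone inequality $|A-B|^{p-2}(A-B)\big(a^pA^{1-p}-b^pB^{1-p}\big)\le |a-b|^p$ for $a,b\ge 0$, $A,B>0$, applied with $A=w(x)+\e$, $B=w(y)+\e$, $a=|v(x)|$, $b=|v(y)|$, the admissible nonnegative test function $\psi_\e=|v|^p(w+\e)^{1-p}\in W^{s,p}_{\beta,0}(\O)\cap L^\infty(\O)$, symmetrization in $(x,y)$, and the limit $\e\downarrow 0$; the structure and the intermediate steps are sound (note that the pointwise inequality requires $w+\e>0$ at both points, which holds because $w>0$ in $\O$ and $w\in W^{s,p}_{\beta,0}(\O)$ vanishes outside $\O$). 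The one step to tighten is the last one: the limit of $\langle(-\Delta)^s_{p,\beta}w,\psi_\e\rangle$ is not ``by definition'' the pairing $\langle(-\Delta)^s_{p,\beta}w,|v|^p/w^{p-1}\rangle$, which is a signed double integral over $D_\O$, and the monotonicity of $\e\mapsto\psi_\e$ by itself does not justify interchanging the limit with that integral. Two clean repairs: either declare that the right-hand side of \eqref{picone1} is defined as the monotone limit you construct (this is precisely where the hypothesis $(-\Delta)^s_{p,\beta}w\ge 0$ is used to give it a meaning), or observe that the discrete Picone inequality applies directly at $\e=0$ for a.e.\ $(x,y)\in D_\O$ once $|v|^p/w^{p-1}$ is read as $0$ outside $\O$ (there $v=0$), so the integrand of the pairing is bounded above pointwise by $|v(x)-v(y)|^p\in L^1(D_\O,d\nu)$ and \eqref{picone1} follows at once, the pairing being well defined in $[-\infty,+\infty)$; in that version your $\e$-regularization and the sign assumption on $(-\Delta)^s_{p,\beta}w$ only serve to show that this quantity is in fact finite.
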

As a consequence the next comparison principle is obtained, that
extends the classical one obtained by Brezis and Kamin in \cite{BK}. See \cite{AB} for the proof.

\begin{Lemma}\label{compa}
Let $\O$ be  a bounded domain and let $h$ be a non-negative
continuous function such that $h(x, \sigma)>0$ if $\sigma>0$, and
$\dfrac{h(x,\sigma)}{\sigma}$ is decreasing. Let $u,v\in
W^{s,p}_{\beta, 0}(\O)$ be such that $u,v>0$ in $\O$ and
$$\left\{
\begin{array}{rcl}
(-\Delta)^s_{p, \beta}u &\geq & h(x, u)\mbox{  in  }\O,\\ \\
(-\Delta)^s_{p, \beta}v & \le & h(x,v)\mbox{  in   }\O.
\end{array}
\right.
$$
Then, $u\geq v$ in $\Omega$.
\end{Lemma}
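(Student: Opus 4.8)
The approach I would take is to adapt the classical comparison argument of Brezis and Kamin \cite{BK} to the present nonlocal weighted setting, using Picone's inequality (Theorem \ref{Picone}) and the algebraic inequalities announced in Section \ref{sec1}. I would argue by contradiction: suppose that the set $E:=\{x\in\O:\,v(x)>u(x)\}$ has positive measure, and derive a contradiction by forcing a strictly positive integrand over $E$ to vanish.

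First I would fix the test functions. Since $u,v\in W^{s,p}_{\beta,0}(\O)$ are positive in $\O$ and vanish outside $\O$, so are their truncations $T_k(u),T_k(v)$, and the function $w:=(T_k(v)^p-T_k(u)^p)_+$ is nonnegative and compactly supported in $\O$. After the standard regularization --- replacing $u,v$ by $u+\e,v+\e$ in order to control the quotients near $\{u=0\}\cup\{v=0\}\cup\partial\O$, and using the extension Lemma \ref{ext} together with the density of $C^\infty_0(\O)$ --- the functions $\dfrac{w}{(u+\e)^{p-1}}$ and $\dfrac{w}{(v+\e)^{p-1}}$ become admissible nonnegative test functions. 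I would then insert $\dfrac{w}{(u+\e)^{p-1}}$ into the weak supersolution inequality $(-\Delta)^s_{p,\beta}u\ge h(x,u)$ and $\dfrac{w}{(v+\e)^{p-1}}$ into the weak subsolution inequality $(-\Delta)^s_{p,\beta}v\le h(x,v)$, and subtract the second from the first. On the right-hand side this produces
$$
\dint_\O w(x)\,\Big(\dfrac{h(x,u)}{(u+\e)^{p-1}}-\dfrac{h(x,v)}{(v+\e)^{p-1}}\Big)\,dx,
$$
and on the left-hand side the single double integral over $D_\O=(\ren\times\ren)\setminus(C\O\times C\O)$, against $d\nu$, of the quantity $U_{xy}\,\Psi^u_{xy}-V_{xy}\,\Psi^v_{xy}$, where $U_{xy}=|u(x)-u(y)|^{p-2}(u(x)-u(y))$, $V_{xy}=|v(x)-v(y)|^{p-2}(v(x)-v(y))$, and $\Psi^u_{xy}=\tfrac{w(x)}{(u(x)+\e)^{p-1}}-\tfrac{w(y)}{(u(y)+\e)^{p-1}}$, $\Psi^v_{xy}=\tfrac{w(x)}{(v(x)+\e)^{p-1}}-\tfrac{w(y)}{(v(y)+\e)^{p-1}}$.

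The key step is then the pointwise algebraic inequality $U_{xy}\Psi^u_{xy}-V_{xy}\Psi^v_{xy}\le 0$, in the limit $\e\to0$: this is the nonlocal analogue of the D\'iaz--Sa\'a--type inequality $\langle-\lp u,\tfrac{w}{u^{p-1}}\rangle-\langle-\lp v,\tfrac{w}{v^{p-1}}\rangle\le0$ that makes the local Brezis--Kamin proof work, and --- since the weight $\tfrac{1}{|x-y|^{N+ps}|x|^\beta|y|^\beta}$ is a positive multiplier that factors out --- it should reduce to an inequality for the four positive reals $u(x),u(y),v(x),v(y)$, obtained from the elementary inequality underlying Theorem \ref{Picone} by applying it twice and symmetrizing in $x\leftrightarrow y$. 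Granting this and passing to the limit (monotone/dominated convergence, then $k\to\infty$ on the truncations, using $u,v>0$ so that the exceptional sets are negligible), I would be left with
$$
0\ \ge\ \dint_\O w(x)\,\Big(\dfrac{h(x,u)}{u^{p-1}}-\dfrac{h(x,v)}{v^{p-1}}\Big)\,dx .
$$
On $E$ one has $w>0$ and $v>u>0$, and the monotonicity of $h(x,\cdot)$ forces $\dfrac{h(x,u)}{u^{p-1}}-\dfrac{h(x,v)}{v^{p-1}}>0$ there, while off $E$ the integrand is $0$; hence the integral is $\ge0$, strictly so if $|E|>0$. Comparison with the displayed inequality gives $|E|=0$, that is $u\ge v$ in $\O$.

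I expect the genuine obstacle to be the verification of this ``discrete Picone''/D\'iaz--Sa\'a inequality on $D_\O$: contrary to the local case there is no product to differentiate, so one must establish a true inequality in the four variables $u(x),u(y),v(x),v(y)$ with the positive part $(v^p-u^p)_+$ built in, the most delicate configurations being the ``mixed'' ones in which exactly one of $x,y$ lies in $E$. The remaining points --- admissibility of the test functions $\tfrac{w}{(u+\e)^{p-1}}$, $\tfrac{w}{(v+\e)^{p-1}}$ and the two limit passages $\e\to0$, $k\to\infty$ --- are more routine and rely on the summability inherited from $u,v\in W^{s,p}_{\beta,0}(\O)$ and on Fatou's lemma.
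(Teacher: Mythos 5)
The paper itself contains no proof of Lemma \ref{compa} (it is quoted from \cite{AB}), so there is nothing internal to compare with; judged on its own, your strategy is the natural one and is consistent with the paper's toolkit (Theorem \ref{Picone}): the Brezis--Kamin/D\'iaz--Sa\'a scheme with the test pair $\frac{w}{u^{p-1}}$, $\frac{w}{v^{p-1}}$, $w=(v^p-u^p)_+$. Moreover, the ``discrete Picone'' inequality that you leave unverified and single out as the genuine obstacle is in fact true, and by exactly the mechanism you anticipate. Writing $a=u(x)$, $b=u(y)$, $c=v(x)$, $d=v(y)$: on the configuration where both points lie in $E$, apply the pointwise inequality underlying Theorem \ref{Picone}, namely $|a-b|^{p-2}(a-b)\bigl(\frac{c^p}{a^{p-1}}-\frac{d^p}{b^{p-1}}\bigr)\le |c-d|^p$ (valid for $a,b>0$, $c,d\ge 0$), once as written and once with the roles of $u$ and $v$ exchanged; adding the two, the terms $|a-b|^p$ and $|c-d|^p$ cancel and one gets $U_{xy}\Psi^u_{xy}-V_{xy}\Psi^v_{xy}\le 0$. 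In the mixed configuration ($c>a$, $d\le b$) one has $w(y)=0$, the expression factors as $(c^p-a^p)\bigl[\frac{|a-b|^{p-2}(a-b)}{a^{p-1}}-\frac{|c-d|^{p-2}(c-d)}{c^{p-1}}\bigr]=(c^p-a^p)\bigl[g(b/a)-g(d/c)\bigr]$ with $g(t)=|1-t|^{p-2}(1-t)$ decreasing, and $b/a\ge d/c$ since $b\ge d\ge 0$ and $0<a<c$; so the bracket is $\le 0$. Hence your flagged obstacle is not a real one, and no delicate case remains.

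Two points do need repair before this is a proof. First, the right-hand side: on $E$ you need $\frac{h(x,u)}{u^{p-1}}>\frac{h(x,v)}{v^{p-1}}$, i.e.\ you are really using that $\sigma\mapsto h(x,\sigma)/\sigma^{p-1}$ is decreasing. Under the hypothesis as literally stated ($h(x,\sigma)/\sigma$ decreasing) this follows only when $p\ge 2$, since $h/\sigma^{p-1}=(h/\sigma)\,\sigma^{2-p}$; for $1<p<2$ your argument does not close as written (take $h(x,\sigma)=\sigma^q$ with $p-1<q<1$ to see that the needed sign can fail). The power $\sigma^{p-1}$ is almost certainly what the lemma intends --- it matches Brezis--Kamin at $p=2$ and the applications in Section \ref{sec3}, where $h(x,\sigma)=\l\sigma^q+g(x)$ with $q\le p-1$ --- but you should state explicitly which monotonicity you invoke. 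Second, the regularization/truncation bookkeeping must be done consistently: if the numerator $w$ is built from $T_k(u),T_k(v)$ while $U_{xy}$, $V_{xy}$ and the denominators involve $u+\e$, $v+\e$, the four-variable inequality above does not apply verbatim. Work throughout with $u_\e=u+\e$, $v_\e=v+\e$ (or put the same truncation in every slot), check that the resulting quotients are admissible bounded test functions in $W^{s,p}_{\beta,0}(\O)$, and only then pass to the limits $\e\to 0$ and $k\to\infty$ by Fatou/dominated convergence. With these adjustments your sketch becomes a complete argument of the type the paper delegates to \cite{AB}.
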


The following algebraic inequalities can be proved using suitable rescaling argument.
\begin{Lemma}\label{algg}
Assume that $p\ge 1$, $a, b \in \re^+$ and $\a>0$. Then there exit a positive constants $c, c_1, c_2$, such that
\begin{equation}\label{alge1}
(a+b)^\a\le c_1a^\a+c_2b^\a
\end{equation}
and
\begin{equation}\label{alge3}
|a-b|^{p-2}(a-b)(a^{\a}-b^{\a})\ge c|a^{\frac{p+\a-1}{p}}-b^{\frac{p+\a-1}{p}}|^p.
\end{equation}
In the case where $\a\ge 1$, then under the same conditions on $a,b,p$ as above, we have
\begin{equation}\label{alge2}
|a+b|^{\a-1}|a-b|^{p}\le c |a^{\frac{p+\a-1}{p}}-b^{\frac{p+\a-1}{p}}|^p.
\end{equation}
\end{Lemma}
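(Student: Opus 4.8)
The plan is to treat the three inequalities separately, in each case reducing to a one-variable statement on a compact set by exploiting homogeneity. For \eqref{alge1}: if $a=b=0$ the inequality is trivial, so assume $a+b>0$ and divide through by $(a+b)^\a$. Setting $t=a/(a+b)\in[0,1]$, we must show $1\le c_1 t^\a+c_2(1-t)^\a$ for suitable constants. Since $t\mapsto t^\a+(1-t)^\a$ is continuous and strictly positive on the compact set $[0,1]$ (it never vanishes, as $t$ and $1-t$ cannot both be zero), it has a positive minimum $m>0$; taking $c_1=c_2=1/m$ gives the claim. (Any $c_1,c_2\ge 1/m$ work, in particular one can always enlarge the constants.)

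For \eqref{alge3} and \eqref{alge2} I would argue similarly. If $a=b$ both sides vanish, so assume without loss of generality $a>b\ge 0$ (the expressions in \eqref{alge3} and \eqref{alge2} are symmetric under swapping $a$ and $b$: in \eqref{alge3}, $(a-b)$ and $(a^\a-b^\a)$ change sign together, and the right-hand side is symmetric; in \eqref{alge2} everything is symmetric). If $b=0$ both inequalities reduce to $a^{p-1}\cdot a^\a\ge c\, a^{p+\a-1}$, resp. $a^{\a-1}a^p\le c\, a^{p+\a-1}$, which hold with $c=1$. For $a>b>0$, divide \eqref{alge3} by $a^{p+\a-1}$ and \eqref{alge2} by $a^{p+\a-1}$, and set $t=b/a\in(0,1)$. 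Then \eqref{alge3} becomes
\[
(1-t)^{p-1}(1-t^\a)\ge c\,(1-t^{\frac{p+\a-1}{p}})^p,
\]
and \eqref{alge2} becomes
\[
(1+t)^{\a-1}(1-t)^p\le c\,(1-t^{\frac{p+\a-1}{p}})^p.
\]
Both sides of each inequality are continuous on $[0,1]$, and the right-hand side of \eqref{alge3} and the left-hand side of \eqref{alge2} vanish only at $t=1$. The key point, which I expect to be the main technical obstacle, is the behavior as $t\to 1^-$: one must check that the ratio of the two sides stays bounded (away from $0$ for \eqref{alge3}, bounded above for \eqref{alge2}). Writing $t=1-\e$ and expanding, both $1-t^\gamma$ and $1-t$ are comparable to $\gamma\e$ and $\e$ respectively to leading order, so $1-t^{\frac{p+\a-1}{p}}\sim \frac{p+\a-1}{p}\,\e$ while $(1-t)^{p-1}(1-t^\a)\sim \a\,\e^p$ and $(1+t)^{\a-1}(1-t)^p\sim 2^{\a-1}\e^p$; hence both ratios have finite nonzero limits at $t=1$. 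By continuity the ratios extend to continuous positive functions on all of $(0,1]$, and since they also have finite limits as $t\to 0^+$ (equal to $1$ in both cases), they are bounded above and bounded below by positive constants on $(0,1)$. Choosing $c$ to be the infimum of the ratio in \eqref{alge3} and the supremum of the ratio in \eqref{alge2} completes the argument.

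One should note the role of the hypothesis $\a\ge 1$ in \eqref{alge2}: for $\a<1$ the factor $(1+t)^{\a-1}$ is still bounded, so boundedness is not the issue, but the asymptotic matching near $t=1$ forces $p\ge$ (something) unless $\a\ge 1$ — more precisely, the point of \eqref{alge2} is that it will be used to control a "gradient-type" term by the transformed difference, and the restriction $\a\ge 1$ is exactly what makes the map $\sigma\mapsto\sigma^{(p+\a-1)/p}$ have the right convexity. In the write-up I would keep the rescaling argument as the main line and relegate the $t\to 1$ limit computations to a short remark, since they are routine Taylor expansions. I do not expect to need anything beyond elementary analysis; no earlier result from the paper is required.
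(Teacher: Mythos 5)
Your proof is correct and follows exactly the route the paper indicates (the paper gives no detailed proof, stating only that the inequalities ``can be proved using suitable rescaling argument''): homogeneity reduces each inequality to a one-variable bound for $t\in[0,1]$, which you settle by continuity, compactness, and the Taylor expansion at $t=1$. The only quibble is your closing speculation about the role of $\alpha\ge 1$ in \eqref{alge2}: the same ratio argument works for any $\alpha>0$ since $p+\alpha-1>0$, so that hypothesis is not forced by the asymptotics, but this side remark does not affect the validity of your proof of the stated lemma.
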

Since we are considering solution with datum in $L^1$, we need to use the concept of truncation. Recall that, for $k>0$,
\begin{gather*}\label{f-trun}
    T_k(a)=\left\{\begin{array}{cl}
    a\,,&\hbox{ if }|a|\le k\,;\\[2mm]
    k\frac{a}{|a|}\,,&\hbox{ if }|a|> k.
    \end{array}\right.
\end{gather*}
Define $G_k(a)=a-T_k(a)$, taking in consideration the above definition, it is not difficult to show the next algebraic inequalities:
\begin{equation}\label{general}
|a-b|^{p-2}(a-b)(T_k(a)-T_k(b))\ge |T_k(a)-T_k(b)|^p
\end{equation}
and
\begin{equation}\label{general00}
|a-b|^{p-2}(a-b)(G_k(a)-G_k(b))\ge |G_k(a)-G_k(b)|^p,
\end{equation}
where $a,b\in \re$ and $p\ge 1$.

In the same way we will use the classical weighted Marcinkiewicz spaces.
\begin{Definition}
\noindent For a measurable function $u$ we set $$ \Phi _{u}(k)=\mu
\{x\in \O:|u(x)|>k\},$$ where $d\mu=|x|^{-2\beta}dx$.\newline We
say that $u$ is in the Marcinkiewicz space ${\mathcal{M}
}^{q}(\O,d\mu)$ if $ \Phi _{u}(k)\le Ck^{-q}$.
%\newline
Since $\O$ is a bounded domain, then
$$L^{q}(\O,d\mu)\subset {\mathcal{M}
}^{q}(\O,d\mu)\subset L^{q-\e}(\O,d\mu)$$ for all $\e>0$.
\end{Definition}
Since we are considering problem with general datum, then we need to precise the concept of solution. We begin by the following definitions.
\begin{Definition}\label{def:truncature}
Let $u$ be a measurable function, we say that $u\in {\mathcal{T} }^{1,p}_{\b, 0}(\O)$ if for all $k>0$, $T_k(u)\in W^{s,p}_{\beta, 0}(\O)$.
\end{Definition}
Now, we are able to state the sense in which we will take a solution to problem \eqref{eq:def}.

\begin{Definition}\label{sense}
Assume that $f\in L^{1}(\O)$. We say that $u$ is a weak solution to problem \eqref{eq:def} if for all $\phi\in \mathcal{C}^\infty_0(\Omega)$, we have
$$
\frac 12\dyle \iint_{D_\O}
\,|u(x)-u(y)|^{p-2}(u(x)-u(y))(\phi(x)-\phi(y))d\nu=\io f(x)\phi(x)dx.
$$
\end{Definition}

Following \cite{ABB}, we define the notion of entropy solution by
\begin{Definition}\label{def:entropy}
Consider $f\in L^{1}(\O)$, we say that $u\in {\mathcal{T} }^{1,p}_{0,\beta}(\O)$
is an entropy solution to problem \eqref{eq:def} if
\begin{equation}\label{entro001}
\iint_{R_h}|u(x)-u(y)|^{p-1}d\nu\to 0\mbox{   as   }h\to \infty,
\end{equation}
where
$$
R_h=\bigg\{(x,y)\in \ren\times \ren: h+1\le \max\{|u(x)|,|u(y)| \}\mbox{  with  } \min\{|u(x)|,|u(y)| \}\le h\mbox{  or  }u(x)u(y)<0\bigg\}
,$$
and for all $k>0$ and $\varphi\in W^{s,p}_{\beta, 0}(\O) \cap L^{\infty}(\O)$,  we
have
\begin{equation}\label{eq:alcala}
\begin{array}{lll}
&\dyle \frac 12\iint_{D_\O}
\,|u(x)-u(y)|^{p-2}(u(x)-u(y))[T_k(u(x)-\varphi(x))-T_k(u(y)-\varphi(y))]d\nu\le \\
&\dyle \io f(x)T_k(u(x)-\varphi(x)) \, dx.
\end{array}
\end{equation}

\end{Definition}

\begin{remarks}

Notice that for $h>>k$, choosing $\varphi=T_{h-1}(u)$, we obtain that
\begin{equation*}
\begin{array}{lll}
&\dyle \frac 12\iint_{D_\O}
\,|u(x)-u(y)|^{p-2}(u(x)-u(y))[T_k(G_{h-1}(u(x)))-T_k(G_{h-1}(u(y)))]d\nu\le \\
&\dyle \io f(x)T_k(G_{h-1}(u(x))) \, dx\le k\int_{|u|>h-k-1} |f(x)|dx.
\end{array}
\end{equation*}
Since $|u(x)-u(y)|^{p-2}(u(x)-u(y))[T_k(G_{h-1}(u(x)))-T_k(G_{h-1}(u(y)))]\ge 0$ in $D_\O$, then setting
$$
\widetilde{R}_h=\bigg\{(x,y)\in \ren\times \ren: u(x)u(y)\ge 0 \mbox{  with  }|u(x)|\ge h\mbox{  and } h-k-1\le |u(y)|\le h\bigg\}
,$$
and
$$
\widehat{R}_h=\bigg\{(x,y)\in \ren\times \ren: u(x)u(y)\ge 0 \mbox{  with  }|u(x)|\ge h \mbox{  and } h-k-1\le |u(x)|\le h\bigg\}
,$$
we reach that
\begin{equation}\label{two1}
\dyle \frac 12\iint_{\widetilde{R}_h}
\,|u(x)-u(y)|^{p-1}(h-u(y))d\nu \le k\int_{|u|>h-k-1} |f(x)|dx,
\end{equation}
and
\begin{equation}\label{two2}
\dyle \frac 12\iint_{\widehat{R}_h}
\,|u(x)-u(y)|^{p-1}(h-u(x))d\nu \le k\int_{|u|>h-k-1} |f(x)|dx.
\end{equation}
It is clear that
\begin{equation}\label{two11}
\dyle \frac 12\iint_{\{h-k-1\le u(y)<u(x)\le h\}}
\,(u(x)-u(y))^{p}d\nu \le k\int_{|u|>h-k-1} |f(x)|dx,
\end{equation}
and
\begin{equation}\label{two21}
\dyle \frac 12\iint_{\{h-k-1\le u(x)<u(y)\le h\}}
\,(u(y)-u(x))^{p}d\nu \le k\int_{|u|>h-k-1} |f(x)|dx.
\end{equation}
\end{remarks}

%In the same way we define the space $Y^{s,\beta}_0(\Omega)$ as
%the completion of $\mathcal{C}^\infty_0(\Omega)$ with respect to
%previous norm.
%If, moreover,  $\Omega$ is a bounded domain, then we can consider
%$Y^{s,\beta}_0(\Omega)$ with the equivalent norm
%$$
%|||\phi|||_{Y^{s,\beta}_0(\Omega)}=
%\Big(\dint_{\Omega}\dint_{\Omega}|\phi(x)-\phi(y)|^pd\nu\Big)^{\frac
%1p}.
%$$
%It is clear that for all $w,v\in X^{s,p,\beta}(\ren)$, we have
%$$
%\langle L_{\beta}w,v\rangle_{X^{s,p,\beta}(\ren)}
%= a_{N,s}\mbox{ P.V. } \dint_{\ren}\dint_{\ren}|w(x)-w(y)|^{p-2}(w(x)-w(y))(v(x)-v(y))\,d\nu.
%$$

%We have also, for  $p\ge 2$  for all $a\ge b$ two real positive constants
%, then there exist $c>0$ such that
%\begin{equation}\label{alge4}
%a^{p-1}-(a-b)^{p-1}\ge c b^{p-1}
%\end{equation}

\section{Existence Results: Proofs of Theorems \ref{mainth} and \ref{entropi}.}\label{sec2}

In this section we consider the next problem
\begin{equation}\label{main00}
\left\{
\begin{array}{rcll}
(-\Delta)^s_{p, \beta} u & = &f & \mbox{ in }\O,\\
 u &= & 0 & \mbox{ in }  \ren\setminus\O,
\end{array}
\right.
\end{equation}
where $f\in L^1(\O)$.

The main goal of this section is to show that problem \eqref{main00} has a weak solution $u$ in the sense of Definition \ref{sense}. As in the local case, the main idea is to proceed by approximation and then pass to the limit using suitable apriori estimates.

Before proving the main existence results, we need several lemmas.

Let $\{f_n\}_n\subset L^\infty(\Omega)$ be such that $f_n\to f$ strongly in $L^1(\Omega)$ and define $u_n$ as the unique
solution to the approximated problem
\begin{equation}\label{pro:lineal1}
\left\{\begin{array}{rcll}
(-\Delta)^s_{p, \beta}u_n &= & f_n(x) & \mbox{  in  }\O,\\
u_n &= & 0 & \mbox{ in } \ren\backslash\O.
\end{array}
\right.
\end{equation}
Notice that the existence and the uniqueness of $u_n$ follows
using classical variational argument in the space $W^{s,p}_{\beta, 0}(\Omega)$.

The first a priori estimate is given by the next Lemma.
\begin{Lemma}\label{one}
Let $\{u_n\}_n$ be defined as above, then $\{u_n\}_n$ is bounded in the
space $\mathcal{M}^{p_1}(\O,d\mu)$ with
$p_1=\frac{(p-1)N}{N-ps}$.
\end{Lemma}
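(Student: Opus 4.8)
The plan is to derive the bound by using the truncation $T_k(u_n)$ as a test function in the weak formulation of the approximating problem \eqref{pro:lineal1}. Since $u_n\in W^{s,p}_{\beta, 0}(\O)$ and $T_k$ is Lipschitz with $T_k(0)=0$, we have $T_k(u_n)\in W^{s,p}_{\beta, 0}(\O)$; moreover, as $f_n\in L^\infty(\O)$, the weak formulation of \eqref{pro:lineal1} (stated in Definition \ref{sense} for $\mathcal{C}^\infty_0(\O)$ test functions) extends by density to test functions in $W^{s,p}_{\beta, 0}(\O)$. Taking $\phi=T_k(u_n)$ gives
$$
\frac 12\iint_{D_\O}|u_n(x)-u_n(y)|^{p-2}(u_n(x)-u_n(y))\big(T_k(u_n(x))-T_k(u_n(y))\big)\,d\nu=\io f_n(x)T_k(u_n(x))\,dx.
$$
By the algebraic inequality \eqref{general} the integrand on the left is pointwise bounded below by $|T_k(u_n(x))-T_k(u_n(y))|^p$, so restricting the integration from $D_\O$ to $\O\times\O$ and recalling the equivalent norm of Remark \ref{equiv}, the left-hand side dominates $\frac 12|||T_k(u_n)|||^p_{W^{s,p}_{\beta, 0}(\O)}$. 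For the right-hand side, $|T_k(u_n)|\le k$ and $M:=\sup_n\|f_n\|_{L^1(\O)}<\infty$ (because $f_n\to f$ in $L^1(\O)$), so it is at most $kM$. Hence
$$
|||T_k(u_n)|||^p_{W^{s,p}_{\beta, 0}(\O)}\le 2kM\qquad\text{for every }k>0\text{ and every }n.
$$

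Next I would feed this into the weighted fractional Sobolev inequality of Theorem \ref{Sobolev} (the case $\beta<\frac{N-ps}{2}$) applied to $T_k(u_n)$; after a routine density argument this yields
$$
\Big(\io\frac{|T_k(u_n)|^{p^{*}_{s}}}{|x|^{2\beta\frac{p^{*}_{s}}{p}}}\,dx\Big)^{\frac{p}{p^{*}_{s}}}\le \frac{1}{S(N,s,\beta)}\,|||T_k(u_n)|||^p_{W^{s,p}_{\beta, 0}(\O)}\le \frac{2kM}{S(N,s,\beta)}.
$$
Since $|T_k(u_n)|=k$ on $\{|u_n|>k\}$, dropping the complementary set gives
$$
k^{p^{*}_{s}}\int_{\{|u_n|>k\}}|x|^{-2\beta\frac{p^{*}_{s}}{p}}\,dx\le \Big(\frac{2M}{S(N,s,\beta)}\Big)^{\frac{p^{*}_{s}}{p}}k^{\frac{p^{*}_{s}}{p}},
$$
and hence, using $\frac{p^{*}_{s}}{p}-p^{*}_{s}=-\,p^{*}_{s}\,\frac{p-1}{p}=-\frac{(p-1)N}{N-ps}=-p_1$,
$$
\int_{\{|u_n|>k\}}|x|^{-2\beta\frac{p^{*}_{s}}{p}}\,dx\le C\,k^{-p_1},\qquad C=C(N,s,\beta,p,M).
$$

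Finally I would convert the weight $|x|^{-2\beta p^{*}_{s}/p}$ into $|x|^{-2\beta}$. Since $p^{*}_{s}/p=\frac{N}{N-ps}\ge 1$ and $\beta\ge 0$, on the bounded domain $\O\subset B_R(0)$ one has $|x|^{-2\beta}=|x|^{-2\beta p^{*}_{s}/p}\,|x|^{2\beta(p^{*}_{s}/p-1)}\le R^{2\beta(p^{*}_{s}/p-1)}\,|x|^{-2\beta p^{*}_{s}/p}$, so
$$
\Phi_{u_n}(k)=\mu\{|u_n|>k\}=\int_{\{|u_n|>k\}}|x|^{-2\beta}\,dx\le R^{2\beta(p^{*}_{s}/p-1)}\int_{\{|u_n|>k\}}|x|^{-2\beta\frac{p^{*}_{s}}{p}}\,dx\le C\,k^{-p_1},
$$
with $C$ independent of $n$. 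By the definition of the weighted Marcinkiewicz space this is exactly the assertion that $\{u_n\}_n$ is bounded in $\mathcal{M}^{p_1}(\O,d\mu)$.

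None of these steps is deep; the points that need care are technical: verifying that $T_k(u_n)$ is an admissible test function and that the bilinear form over $D_\O$ controls the one over $\O\times\O$, keeping track of the weighted exponent $2\beta p^{*}_{s}/p$ appearing in Theorem \ref{Sobolev} (which is exactly the place where $\beta<\frac{N-ps}{2}$ is needed), and the comparison of weights on $\O$, which is where $\beta\ge 0$ and the boundedness of $\O$ enter. The uniformity in $n$ is automatic from the $L^1$-convergence of $\{f_n\}_n$.
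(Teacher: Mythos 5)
Your proof is correct and follows essentially the same route as the paper: testing \eqref{pro:lineal1} with $T_k(u_n)$, using \eqref{general} to bound the Gagliardo-type seminorm of $T_k(u_n)$ by $Ck$, applying the weighted Sobolev inequality of Theorem \ref{Sobolev}, and then a Chebyshev argument on the level set $\{|u_n|>k\}$. Your explicit comparison of the weights $|x|^{-2\beta p^*_s/p}$ and $|x|^{-2\beta}$ on the bounded domain (using $\beta\ge 0$ and $\O\subset B_R$) spells out a step the paper leaves implicit, and the minor detour through the $\O\times\O$ seminorm is harmless since the full $D_\O$ bound is what actually feeds Theorem \ref{Sobolev}.
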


\pf Using $T_k(u_n)$ as a test function in \eqref{pro:lineal1}, we
reach that
$$\frac 12\iint_{D_{\O}} \,|u_n(x)-u_n(y)|^{p-2}(u_n(y)-u_n(x))[T_k(u_n(x))-T_k(u_n(y))]d\nu\le k\io |f_n(x)| \, dx.$$
Thus
$$
\dyle \iint_{D_{\O}} \,
|u_n(x)-u_n(y)|^{p-2}(u_n(y)-u_n(x))[T_k(u_n(x))-T_k(u_n(y))]d\nu \le C
k.
$$
Recall that $u_n=T_k(u_n)+G_k(u_n)$, then by inequality
\eqref{general}, we have
\begin{equation}\label{eq:eq1}
\frac{1}{k}\iint_{D_{\O}} \,|T_k(u_n(x))-T_k(u_n(y))|^{p}d\nu \le M, \mbox{  for  all  }k>0.
\end{equation}
Now, using the weighted Sobolev inequality in Theorem \ref{Sobolev}, we
get
$$ S\left(\dyle \int_{\ren}
|T_k(u)|^{p^*_{s}}|x|^{-2\beta\frac{p^*_{s}}{p}}
dx\right)^{p/p^*_{s}}\le \iint_{D_{\O}}
\,|T_k(u_n(x))-T_k(u_n(y))|^{p}d\nu\le C k.$$ Since
$\{|u_n|\ge k\}=\{|T_k(u_n)|= k\}$, we obtain that $$ \mu\{x\in \O
:|u_n|\ge k\}\le \mu \{x\in \O:|T_k(u_n)|=k\}\le \io
\dfrac{|T_k(u_n)|^{p^*_{s}}}{k^{p^*_{s}}}|x|^{-2\beta\frac{p^*_{s}}{p}}
dx .$$ Therefore, $ \mu\{x\in \O :|u_n|>k\}\le C
M^{\frac{p^*_{s}}{p}}k^{-(p^*_{s}-\frac{p^*_{s}}{p})}$. Setting $p_1=p^*_{s}-\frac{p^*_{s}}{p}=\frac{N(p-1)}{N-ps}$, we conclude that the sequence $\{u_n\}_n$ is bounded in the space ${\mathcal{M} }^{p_1}(\O,d\mu)$ and the result follows.
\cqd

As a consequence we easily get that the sequence $\{|u_n|^{p-2}u_n\}_n$ is bounded in the space $L^{\sigma}(\O,d\mu)$ for all $\sigma<\frac{N}{N-ps}$.

As in the local case, we prove now that the sequence $\{u_n\}_n$
is bounded in a suitable fractional Sobolev space. More precisely
we have
\begin{Lemma}\label{two}
Assume that $\{u_n\}_n$ is as above, then
\begin{equation}\label{two00}
\int_\O\int_{\O}\dfrac{|u_n(x)-u_n(y)|^{q}}{|x-y|^{N+qs_1}}\dfrac{1}{|x|^\b|y|^\b}dy
\ dx\le M, \mbox{   for all
}q<\dfrac{N(p-1)}{N-s}\mbox{  and for all
}s_1<s.
\end{equation}
\end{Lemma}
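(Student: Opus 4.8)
The plan is to deduce the bound \eqref{two00} from the truncation estimate \eqref{eq:eq1} together with the Marcinkiewicz bound of Lemma \ref{one}, by decomposing the double integral according to the level sets of $u_n$. First I would fix $q<\frac{N(p-1)}{N-s}$ and $s_1<s$, and write the Gagliardo-type seminorm over $\O\times\O$ as a sum over dyadic annuli in the values of $u_n$: set $A_j=\{x\in\O: 2^{j}\le |u_n(x)|<2^{j+1}\}$ for $j\in\Z$ (plus the set where $u_n=0$), and split
$$
\io\io\frac{|u_n(x)-u_n(y)|^q}{|x-y|^{N+qs_1}}\,d\nu
=\sum_{j,\,k}\int_{A_j}\int_{A_k}\frac{|u_n(x)-u_n(y)|^q}{|x-y|^{N+qs_1}}\,d\nu .
$$
On the "diagonal" blocks, where $|j-k|$ is bounded, the difference $u_n(x)-u_n(y)$ coincides (up to the contribution of a single truncation level) with $T_k(u_n)(x)-T_k(u_n)(y)$ for an appropriate $k\sim 2^{j}$, so the corresponding piece is controlled by $|||T_{2^{j}}(u_n)|||_{W^{s_1,q}_{\beta,0}}^q$; on the "off-diagonal" blocks one uses that $|u_n(x)-u_n(y)|\sim \max(|u_n(x)|,|u_n(y)|)$ and that the measure of the far annulus is small by Lemma \ref{one}.

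The technical heart is a Hölder interpolation passing from the exponent $p$ in \eqref{eq:eq1} to the exponent $q<p$ with the lower smoothness $s_1<s$. Concretely, for each $k>0$ I would estimate, via Hölder's inequality in the measure $d\nu$ restricted to $\{(x,y): |x-y|\le \operatorname{diam}\O\}$ with exponents $\frac pq$ and $\frac{p}{p-q}$,
$$
\io\io\frac{|T_k(u_n(x))-T_k(u_n(y))|^q}{|x-y|^{N+qs_1}}\frac{dx\,dy}{|x|^\b|y|^\b}
\le \Big(\io\io\frac{|T_k(u_n(x))-T_k(u_n(y))|^p}{|x-y|^{N+ps}}\,d\nu\Big)^{\frac qp}
\Big(\io\io \frac{dx\,dy}{|x-y|^{N+\frac{(qs_1-qs\frac qp)p}{p-q}}|x|^\b|y|^\b}\Big)^{\frac{p-q}{p}},
$$
and the last factor is finite provided $\frac{q(s-s_1)p}{p-q}<$ (something that keeps the $|x-y|$-singularity integrable together with the two weights $|x|^{-\b},|y|^{-\b}$), which is exactly the role of the restrictions $q<\frac{N(p-1)}{N-s}$ and $s_1<s$; here one also invokes $\b<\frac{N-ps}{2}$ so that the weights $|x|^{-\b}|y|^{-\b}$ are locally integrable against the kernel. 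Combining this with \eqref{eq:eq1} gives $|||T_k(u_n)|||_{W^{s_1,q}_{\beta,0}}^q\le M\,k^{q}$ for all $k$, uniformly in $n$ — but this still only bounds the truncations, not $u_n$ itself, so the remaining work is to sum the dyadic contributions $k=2^j$ with the geometric-decay coming from $\mu(A_j)\le M\,2^{-j p_1}$ (Lemma \ref{one}) and to check that $q<\frac{N(p-1)}{N-s}$ makes the series $\sum_j 2^{jq}\mu(A_j)^{\text{(positive power)}}$ convergent.

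I expect the main obstacle to be bookkeeping the off-diagonal blocks $\int_{A_j}\int_{A_k}$ with $|j-k|$ large: there $|u_n(x)-u_n(y)|^q\le C\,2^{\max(j,k)q}$ but we only have the crude kernel bound $|x-y|^{-N-qs_1}$, so one must integrate the kernel over $A_{\max}\times A_{\min}$ using $\mu(A_{\min})\lesssim 2^{-p_1\min(j,k)}$ and the finiteness of $\int_\O |x-z|^{-N-qs_1}|x|^{-\b}\,dx$ for $z$ fixed (which requires $qs_1<$ the dimension gap left by the weight, again guaranteed by the hypotheses). Matching the exponent threshold $\frac{N(p-1)}{N-s}$ precisely — rather than something slightly smaller — is where the choice $s_1<s$ (giving room in the kernel) must be used carefully, and this balancing of the three small parameters $(p-q)$, $(s-s_1)$, $\b$ against the Sobolev exponent is the delicate point of the argument.
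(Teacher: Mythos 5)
Your overall strategy --- a dyadic decomposition over the level sets of $u_n$ combined with a H\"older interpolation that trades the exponent $p$ and smoothness $s$ of the truncation estimate \eqref{eq:eq1} for the exponent $q$ and the smoothness $s_1<s$ --- is a legitimate alternative to the paper's argument (the paper instead tests the approximating equation with $w_n=1-(1+u_n^{\pm})^{-\alpha}$, uses the algebraic inequalities \eqref{alge3} and \eqref{alge2} plus one global H\"older step, and controls the resulting potential-type integral through the kernel computation for $K(\sigma)$ together with Lemma \ref{one}, choosing $\alpha$ small so that $\frac{(\alpha+1)q}{p-q}<\frac{N(p-1)}{N-ps}$). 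But as written your plan has a genuine gap exactly where you flag the difficulty. The off-diagonal blocks cannot be handled by the crude bound $|u_n(x)-u_n(y)|^q\le C\,2^{\max(j,k)q}$ together with integrating the raw kernel $|x-y|^{-N-qs_1}$ over $A_{\max}\times A_{\min}$: for two sets touching along a hypersurface that integral diverges whenever $qs_1\ge 1$ (which happens in the admissible range, e.g.\ $p=2$, $s$ close to $1$, $q$ close to $N/(N-1)$), so no smallness of $\mu(A_{\min})$ can rescue it. The cancellation must be retained: on $A_j\times A_{j'}$ with $j'\le j-3$ one has $|u_n(x)-u_n(y)|\le C\,|T_{2^{j+1}}(u_n)(x)-T_{2^{j+1}}(u_n)(y)|$, so the off-diagonal blocks should be treated by the same H\"older/energy step as the diagonal ones, not by a sup bound on the numerator.

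Moreover, the quantitative bookkeeping that produces the sharp threshold is not carried out, and the intermediate bounds you state would not suffice. H\"older gives $C(Mk)^{q/p}$, not $Mk^{q}$, and in any case a bound on the \emph{full} truncated seminorm at level $k=2^{j}$ cannot be summed over $j$ (it grows with $j$). What is needed is to localize \emph{both} H\"older factors to the block $A_j\times A_{j'}$: the first factor is then at most $(M2^{j+3})^{q/p}$ by \eqref{eq:eq1}, while the second factor $\Big(\iint_{A_j\times A_{j'}}|x-y|^{\theta-N}|x|^{-\beta}|y|^{-\beta}\,dx\,dy\Big)^{\frac{p-q}{p}}$, with $\theta=\frac{pq(s-s_1)}{p-q}>0$, must be shown to be at most $C\mu(A_j)^{\frac{p-q}{p}}$ --- this is where $s_1<s$, $\beta<\frac{N-ps}{2}$ and the boundedness of $\Omega$ actually enter. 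Only then does Lemma \ref{one} give block contributions of order $2^{jq/p}2^{-jp_1(p-q)/p}$, and the resulting series converges precisely when $q<p_1(p-q)$, which is equivalent to $q<\frac{N(p-1)}{N-s}$. None of these steps appears in your write-up, and since they constitute the heart of the proof, the proposal as it stands does not establish the lemma, although its skeleton can be completed along the lines just indicated.
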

\begin{pf}
Let $q<\dfrac{N(p-1)}{N-s}$ be fixed. Since $\O$ is a bounded domain, then it is sufficient to prove \eqref{two00} for $s_1$ very close to $s$. In particular, we fix $s_1$ such that
\begin{equation}\label{twos1}
\frac{pq(s-s_1)}{p-q}<\beta.
\end{equation}
Define $w_n(x)=1-\dfrac{1}{(u^+_n(x)+1)^{\a}}$ where $\a>0$ to be chosen
later and $u^+_n(x)=\max\{u_n(x), 0\}$. Using $w_n$ as a test function in \eqref{pro:lineal1}, we get
$$\frac 12\iint_{D_{\O}} \,|u_n(x)-u_n(y)|^{p-2}(u_n(x)-u_n(y))\frac{(u^+_n(x)+1)^{\a}-(u^+_n(y)+1)^{\a}}{(u^+_n(x)+1)^{\a}(u^+_n(y)+1)^{\a}}d\nu\le\io f_n(x) \, dx.$$
Hence
$$\iint_{D_{\O}} \,|u_n(x)-u_n(y)|^{p-2}(u_n(x)-u_n(y))\frac{(u^+_n(x)+1)^{\a}-(u^+_n(y)+1)^{\a}}{(u^+_n(x)+1)^{\a}(u^+_n(y)+1)^{\a}}d\nu\le
C.$$ Let $v_n(x)=u^+_n(x)+1 $, since
\begin{eqnarray*}
&|u_n(x)-u_n(y)|^{p-2}(u_n(x)-u_n(y))\Big((u^+_n(x)+1)^{\a}-(u^+_n(y)+1)^{\a}\Big)\ge
\\
&
|u^+_n(x)-u^+_n(y)|^{p-2}(u^+_n(x)-u^+_n(y))\Big((u^+_n(x)+1)^{\a}-(u^+_n(y)+1)^{\a}\Big)=\\
&|v_n(x)-v_n(y)|^{p-2}(v_n(x)-v_n(y))\Big(v^\a_n(x)-v^\a_n(y)\Big),
\end{eqnarray*}
it follows that
$$\iint_{D_{\O}} \,|v_n(x)-v_n(y)|^{p-2}(v_n(x)-v_n(y))(\frac{v^\a_n(x)-v^\a_n(y)}{v^\a_n(x)v^\a_n(y)})d\nu\le C. $$
Now, using the fact that $v_n\ge 1$ and by inequality \eqref{alge3}, we get
\begin{equation}\label{hola}
\iint_{D_{\O}}
\,\frac{|v_n^{\frac{p+\a-1}{p}}(x)-v_n^{\frac{p+\a-1}{p}}(y)|^{p}}{v^\a_n(x)v^\a_n(y)}d\nu\le C .
\end{equation}
Define $q_1=q\frac{s_1}{s}<q$, using H\"older inequality, it follows that
\begin{eqnarray*}
&\dyle \int_\O\int_{\O}\dfrac{|v_n(x)-v_n(y)|^{q}}{|x-y|^{N+qs_1}}\dfrac{dy\ dx}{|x|^\b|y|^\b}=\\
&\dyle \int_\O\int_{\O}\dfrac{|v_n(x)-v_n(y)|^{q}}{|x-y|^{qs}}\frac{(v_n(x)+v_n(y))^{\a-1}}{(v_n(x)v_n(y))^{\a}}\frac{(v_n(x)v_n(y))^{\a}}{(v_n(x)+v_n(y))^{\a-1}}|x-y|^{(q-q_1)s}\dfrac{dy\ dx}{|x|^\b|y|^\b|x-y|^{N}}\\ \\
&\dyle\le\Big(\int_\O\int_{\O}\dfrac{|v_n(x)-v_n(y)|^{p}(v_n(x)+v_n(y))^{\a-1}}{|x-y|^{N+ps}(v(x)v(y))^{\a}|x|^\b|y|^\b}dy
\ dx \Big)^{\frac{q}{p}}\\
&\times \dyle \Big(\int_\O\int_{\O}\frac{(v_n(x)+v_n(y))^{\a-1}}{(v(x)v(y))^{\a}}\frac{(v_n(x)v_n(y))^{\a\frac{p}{p-q}}}{(v_n(x)+v_n(y))^{(\a-1)\frac{p}{p-q}}}|x-y|^{(q-q_1)s\frac{p}{p-q}}\dfrac{dy \ dx}{|x-y|^{N}|x|^\b|y|^\b}\Big)^{\frac{p-q}{q}}.\end{eqnarray*}
Now, using the algebraic inequality \eqref{alge2}, it follows that
$$
|v_n(x)-v_n(y)|^{p}(v_n(x)+v_n(y))^{\a-1}\le C |v_n(x)^{\frac{p+\a-1}{p}}-v_n(y)^{\frac{p+\a-1}{p}}|^{p}.
$$
Hence, taking in consideration that $\O\times \O\subset D_{\O}$ and by \eqref{hola}, we get
\begin{eqnarray*}
&\dyle \Big(\int_\O\int_{\O}\dfrac{|v_n(x)-v_n(y)|^{p}(v_n(x)+v_n(y))^{\a-1}}{|x-y|^{N+ps}(v(x)v(y))^{\a}|x|^\b|y|^\b}
dy \ dx\Big)^{\frac{q}{p}}\\
& \le \dyle C\Big(\iint_{D_{\O}}\frac{|v_n(x)^{\frac{p+\a-1}{p}}-v_n(y)^{\frac{p+\a-1}{p}}|^{p}}{|x-y|^{N+ps}(v_n(x)v_n(y))^{\a}|x|^\b|y|^\b}
dy \ dx\Big)^{\frac{q}{p}}\le C.
\end{eqnarray*}
So we get
\begin{eqnarray*}
&\dyle \iint_{D_{\O}}\dfrac{|v_n(x)-v_n(y)|^{q}}{|x-y|^{N+qs_1}}\dfrac{dy \ dx}{|x|^\b|y|^\b}\le\\
&\dyle c\Big(\int_\O\int_{\O}\Big(\frac{(v_n(x)v_n(y))^{\a}}{(v_n(x)+v_n(y))^{\a}}\Big)^\frac{q}{p-q}(v_n(x)+v_n(y))^\frac{q}{p-q}\dfrac{1}{|x-y|^{N-\frac{ps(q-q_1)}{p-q}}}\dfrac{dy
\ dx}{|x|^\b|y|^\b}\Big)^{\frac{p-q}{q}}.
\end{eqnarray*}
By inequality \eqref{alge1}, we have
$$(v_n(x)+v_n(y))\big( \frac{v_n(x)v_n(y)}{v_n(x)+v_n(y)}\big)^{\a}\le c (v_n(x)+v_n(y))^{\a+1}\le c_1v^{\a+1}_n(x)+c_2v^{\a+1}_n(y).$$
Therefore,
\begin{eqnarray*}
&\dyle \int_\O\int_{\O}\dfrac{|v_n(x)-v_n(y)|^{q}}{|x-y|^{N+qs_1}}\dfrac{dx \ dy}{|x|^\b|y|^\b}\le \\ &\dyle c_1\Big(\int_\O\int_{\O}\dfrac{v_n^{\frac{(\a+1)q}{p-q}}(x)dx \ dy}{|x-y|^{N-\frac{ps(q-q_1)}{p-q}}|x|^\b|y|^\b}\Big)^{\frac{p-q}{q}} +  c_2\Big(\int_\O\int_{\O}\dfrac{v_n^{\frac{(\a+1)q}{p-q}}(y)dx \ dy}{|x-y|^{N-\frac{ps(q-q_1)}{p-q}}|x|^\b|y|^\b}\Big)^{\frac{p-q}{q}}.
\end{eqnarray*}
We treat each term separately.
$$\int_\O\int_{\O}\dfrac{v^{\frac{(\a+1)q}{p-q}}_n(x)dx \ dy}{|x-y|^{N-\frac{ps(q-q_1)}{p-q}}|x|^\b|y|^\b}=\int_{\O} \dfrac{v^{\frac{(\a+1)q}{p-q}}_n(x)}{|x|^\b}dx\int_{\O}\dfrac{dy}{|x-y|^{N-\frac{ps(q-q_1)}{p-q}}|y|^\b}.$$
Since $\Omega$ is a bounded domain, then $\Omega\subset\subset B_R(0)$. Thus
$$\int_\O\int_{\O}\dfrac{v^{\frac{(\a+1)q}{p-q}}_n(x)dx \ dy}{|x-y|^{N-\frac{ps(q-q_1)}{p-q}}|x|^\b|y|^\b}\le \int_{B_R(0)} \dfrac{v^{\frac{(\a+1)q}{p-q}}_n(x)}{|x|^\b}dx\int_{B_R(0)}\dfrac{dy}{|x-y|^{N-\frac{ps(q-q_1)}{p-q}}|y|^\b}$$
where $v_n=1$ in $B_R(0)\setminus \Omega$.
We set $r=|x|$ and $\rho=|y|$, then $x=rx', y=\rho y'$.
where $|x'|=|y'|=1$,

Let $\tau=\frac{(\a+1)q}{p-q}$ and $\theta=\frac{ps(q-q_1)}{p-q}$, then
$$\int_\O\int_{\O}\dfrac{v^\tau_n(x)dx \ dy}{|x-y|^{N-\theta}|x|^\b|y|^\b}\le
\int_{B_R(0)}\dfrac{v^\tau_n(x)\ dx }{|x|^{\beta}}
\dint\limits_0^{R}\dfrac{\rho^{N-1}}{\rho^{\beta}
r^{N-\theta}}\left(
\dint\limits_{|y'|=1}\dfrac{dH^{n-1}(y')}{|x'-\frac{\rho}{r}
y'|^{N-\theta}} \right) \,d\rho.$$ We set
$\sigma=\dfrac{\rho}{r}$, hence
\begin{eqnarray*}
\dyle \int_\O\int_{\O}\dfrac{v^\tau_n(x)dx \ dy}{|x-y|^{N-\theta}|x|^\b|y|^\b} &\le &
\int_{B_R(0)}\dfrac{v^\tau_n(x)\ dx
}{|x|^{2\beta-\theta}}\dint\limits_0^{\frac{R}{r}}\sigma^{N-\beta-1}
\left(\dint\limits_{|y'|=1}\dfrac{dH^{n-1}(y')}{|x'-\s
y'|^{N-\theta}} \right) \,d\sigma\\
&\le & \dyle \int_{B_R(0)}\dfrac{v^\tau_n(x)\ dx
}{|x|^{2\beta-\theta}}\dint\limits_0^{\infty}\sigma^{N-\beta-1}
\left(\dint\limits_{|y'|=1}\dfrac{dH^{n-1}(y')}{|x'-\s
y'|^{N-\theta}} \right) \,d\sigma.
\end{eqnarray*}
Define
$$
K(\s)=\dint\limits_{|y'|=1}\dfrac{dH^{n-1}(y')}{|x'-\s
y'|^{N-\theta}},
$$
as in \cite{FV}, we get
\begin{equation}\label{kkk}
K(\sigma)=2\frac{\pi^{\frac{N-1}{2}}}{\beta(\frac{N-1}{2})}\int_0^\pi
\frac{\sin^{N-2}(\xi)}{(1-2\sigma \cos
(\xi)+\sigma^2)^{\frac{N-\theta}{2}}}d\xi.
\end{equation}
Since $K(\s)\le C|1-\s|^{-1+\theta}$
as $\s\to 1$ and using the fact that $\s^{N-1-\beta}K(\s)\simeq \s^{-1-\beta+\theta}$ as $\s\to \infty$ with $\theta<\beta$(that follows by \eqref{twos1}),
we obtain that $\int_0^\infty \s^{N-1-\beta}K(\s)d\s\equiv
C_3<\infty$. Therefore
$$\int_\O\int_{\O}\dfrac{v^\tau_n(x)dy \ dx}{|x-y|^{N-\theta}|x|^\b|y|^\b}\le C_3\int_{B_R(0)}\dfrac{v^\tau_n(x)\ dx }{|x|^{2\beta-\theta}}.$$
Since $q<\frac{(p-1)N}{N-s}$, we can choose $\a>0$ such that $\tau<\frac{(p-1)N}{N-ps}$. Using Lemma \ref{one}, we reach that
$$
\int_{B_R(0)}\dfrac{v^\tau_n(x)\ dx }{|x|^{2\beta-\theta}}\le C\mbox{  for all  n}.
$$
Hence we conclude that
$$\int_\O\int_{\O}\dfrac{|u^+_n(x)-u^+_n(y)|^{q}}{|x-y|^{N+qs_1}}\dfrac{dy \ dx}{|x|^\b|y|^\b}\le C.$$
In the same way and using $\Big(1-\dfrac{1}{(u^-_n(x)+1)^{\a}}\Big)$ as a
test function in \eqref{pro:lineal1}, we obtain that
$$\int_\O\int_{\O}\dfrac{|u^-_n(x)-u^-_n(y)|^{q}}{|x-y|^{N+qs_1}}\dfrac{dy \ dx}{|x|^\b|y|^\b}\le C.$$
Combining the above estimates, we reach that
$$\int_\O\int_{\O}\dfrac{|u_n(x)-u_n(y)|^{q}}{|x-y|^{N+qs_1}}\dfrac{dy \ dx}{|x|^\b|y|^\b}\le C$$
and the result follows.
\end{pf}

\begin{remark}\label{r01}
As a consequence we get the existence of a measurable function $u$
such that $T_k(u)\in W^{s,p}_{\beta, 0}(\Omega)$, $|u|^{p-2}u\in
L^\s(\O,|x|^{-2\beta} dx)$ for all $\s<\dfrac{N}{N-ps}$ and
$T_k(u_n)\rightharpoonup T_k(u)$ weakly in $W^{s,p}_{\beta, 0}(\Omega)$.

It is clear that $u_n\to u$ a.e. in $\O$. Since $u_n=0$ a.e. in $\ren\backslash \O$, then $u=0$ a.e. in $\ren\backslash \O$.

Notice that by Lemma \ref{one} we conclude that
$$
|u_n|^{p-2}u_n\to |u|^{p-2}u \mbox{   strongly in   }L^a(\Omega, d\mu)\mbox{  for all   }a<\dfrac{N}{N-ps}.
$$
Let
\begin{equation}\label{nota}
U_n(x,y)=|u_n(x)-u_n(y)|^{p-2}(u_n(x)-u_n(y))\mbox{  and  }U(x,y)=|u(x)-u(y)|^{p-2}(u(x)-u(y)).
\end{equation}
Since $\O$ is a bounded domain, then by the result of Lemma \ref{two} and using Vitali's Lemma,  we obtain that
$$
U_n\to U \mbox{   strongly in   }L^1(\O\times \O, d\nu).
$$
\end{remark}

We are now able to prove the first existence result.

\

{\bf Proof of Theorem \ref{mainth}}.

It is clear that estimate \eqref{estimm} follows using Lemma \ref{two} and Fatou's Lemma.

Let $\phi\in \mathcal{C}^\infty_0(\Omega)$, then using $\phi$ as a test function in \eqref{pro:lineal1}, it follows that

\begin{equation}\label{TR}\frac 12\iint_{D_{\O}} \,|u_n(x)-u_n(y)|^{p-2}(u_n(x)-u_n(y))(\phi(x)-\phi(y))d\nu =\io f_n(x)\phi(x) \, dx.\end{equation}
We set $\Phi(x,y)=\phi(x)-\phi(y)$. By \eqref{TR}, we have
\begin{equation}\label{GGG}
\frac 12\iint_{D_{\O}} \,U(x,y)\Phi(x,y)d\nu+\frac 12\iint_{D_{\O}}\Big(U_n(x,y)-U(x,y)\Big)\Phi(x,y)d\nu=\io f_n(x)\phi(x) \, dx.
\end{equation}
It is clear that
$$
\io f_n(x)\phi(x) \, dx\to \io f(x)\phi(x) \, dx \mbox{   as   }n\to \infty.
$$
We claim that
$$
\iint_{D_{\O}}\Big(U_n(x,y)-U(x,y)\Big)\Phi(x,y)d\nu\to 0\mbox{   as    }n\to \infty.
$$
Since $u_n\to u$ a.e. in $\Omega$, it follows that
$$
\dfrac{U_n(x,y)\Phi(x,y)}{|x-y|^{N+ps}|x|^\beta|y|^\beta}\to \dfrac{U(x,y)\Phi(x,y)}{|x-y|^{N+ps}|x|^\beta|y|^\beta}\:a.e. \mbox{  in  }D_\Omega.
$$
Using the fact that $u(x)=u_n(x)=\phi(x)=0$ for all $x\in \ren\backslash\O$, we reach that
$$
\int_{\ren\backslash \O}\int_{\ren\backslash \O} (U_n(x,y)-U(x,y))\Phi(x,y)d\nu=0.
$$
Thus
\begin{eqnarray*}
& \dyle\iint_{D_{\O}}(U_n(x,y)-U(x,y))\Phi(x,y)d\nu=\iint_{\O\times \O}(U_n(x,y)-U(x,y))\Phi(x,y)d\nu\\
&+\dyle \int_{\ren\backslash \O}\int_{\O}(U_n(x,y)-U(x,y))\Phi(x,y)d\nu+\int_{\O}\int_{\ren\backslash \O} (U_n(x,y)-U(x,y))\Phi(x,y)d\nu\\
&=I_1(n)+I_2(n)+I_3(n).
\end{eqnarray*}
Using Lemma \ref{two} and Remark \ref{r01}, we easily get that
$$
I_1(n)\to 0\mbox{   as   }n\to \infty.
$$
We deal now with $I_2(n)$. It is clear that
$$
|(U_n(x,y)-U(x,y))\Phi(x,y)|\le (|u_n(x)|^{p-1}+|u(x)|^{p-1})|\phi(x)| \mbox{  in   }\O\times B_R\backslash \O.
$$
Since
$$
\sup_{\{x\in \text{Supp}\phi, \:\:y\in B_R\backslash \O\}}\dfrac{1}{|x-y|^{N+ps}}\le C,
$$
then
$$
\Big|\dfrac{(U_n(x,y)-U(x,y))\Phi(x,y)}{|x-y|^{N+ps}|x|^\beta|y|^\beta}\Big|\le \dfrac{(|u_n(x)|^{p-1}+|u(x)|^{p-1})|\phi(x)|}{|x|^\beta|y|^\beta}\equiv Q_n(x,y).
$$
Using Lemma \ref{one} and Remark \ref{r01}, we get $Q_n\to Q$ strongly in $L^1(\O\times B_R\backslash \O)$ with $$Q(x,y)=2|u(x)|^{p-1}|\phi(x)|{|x|^{-\beta}|y|^{-\beta}}.$$ Thus using the Dominated Convergence Theorem we reach that $I_2(n)\to 0\mbox{   as   }n\to \infty$.
In the same way we obtain that $I_3(n)\to 0\mbox{   as   }n\to \infty$. Hence the claim follows.

Therefore, passing to the limit in \eqref{GGG}, there results that
$$
\frac 12\iint_{D_{\O}} \,U(x,y)\Phi(x,y)d\nu=\io f(x)\phi(x) \, dx.
$$
Hence we conclude. \cqd

\begin{remark}

\

\begin{enumerate}

\item It is clear that the same existence result holds if we replace $f$ by a bounded Radon measure $\nu$.

\item In the case where $\beta=0$, we get the same existence and regularity results obtained in \cite{KMS}.
\end{enumerate}
\end{remark}

\subsection{The case of positive datum: Existence and uniqueness of the positive entropy solution}\label{subb}

\

If $f\gneq 0$, we choose $f_n=T_n(f)$, thus $\{u_n\}_n$ is an increasing sequence. In this case we are able to prove that problem \eqref{main00} has a unique  entropy positive solution in the sense of Definition \ref{def:entropy}. Before staring the proof of Theorem \ref{entropi}, let us prove the next result.
\begin{Lemma}\label{convv}
Let $\{u_n\}_n$ and $u$ defined above, then
$$T_k(u_n)\longrightarrow T_k(u) \   \   \qquad  \text{ strongly in } W^{s,p}_{\beta, 0}(\Omega).$$
\end{Lemma}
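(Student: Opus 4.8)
The plan is to show that the truncations $T_k(u_n)$ converge strongly in $W^{s,p}_{\beta,0}(\Omega)$ by testing the approximating equation \eqref{pro:lineal1} with the difference $T_k(u_n)-T_k(u_m)$ (or, more cleanly, with $T_k(u_n)-T_k(u)$ after inserting $T_k(u)$ as an admissible test object via density) and exploiting the monotonicity of the sequence $\{u_n\}_n$. Since $f_n=T_n(f)$ is increasing and nonnegative, the comparison principle of Lemma \ref{compa} (or a direct monotonicity argument on \eqref{pro:lineal1}) gives $0\le u_n\le u_{n+1}$, hence $u_n\uparrow u$ pointwise, and in particular $T_k(u_n)\uparrow T_k(u)$ pointwise and in $L^p(\Omega,d\mu)$ for every $k>0$. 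From Remark \ref{r01} we already know $T_k(u_n)\weakly T_k(u)$ weakly in $W^{s,p}_{\beta,0}(\Omega)$, so it remains only to upgrade this to norm convergence, i.e. to prove $|||T_k(u_n)|||\to |||T_k(u)|||$, equivalently that the Gagliardo seminorms converge.

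The key step is the following. Use $\varphi_{n}:=T_k(u_n)-T_k(u)$ — which is bounded and, for each fixed $n$, lies in $W^{s,p}_{\beta,0}(\Omega)$ — as a test function in the weak formulation of \eqref{pro:lineal1}; note $\varphi_n\to 0$ a.e., $\varphi_n$ is bounded in $L^\infty$ uniformly in $n$, and $\varphi_n\weakly 0$ in $W^{s,p}_{\beta,0}(\Omega)$. This yields
\begin{equation*}
\frac12\iint_{D_\Omega} U_n(x,y)\big(\varphi_n(x)-\varphi_n(y)\big)\,d\nu=\io f_n(x)\varphi_n(x)\,dx.
\end{equation*}
The right-hand side tends to $0$: $f_n\to f$ in $L^1$ and $\varphi_n\to 0$ a.e.\ with $\|\varphi_n\|_\infty\le 2k$, so by dominated convergence $\io f_n\varphi_n\,dx\to 0$. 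On the left, split $U_n(\varphi_n(x)-\varphi_n(y))=U_n\big(T_k(u_n(x))-T_k(u_n(y))\big)-U_n\big(T_k(u(x))-T_k(u(y))\big)$. In the second term, $U_n\to U$ strongly in $L^1(\Omega\times\Omega,d\nu)$ by Remark \ref{r01} (and the outside-$\Omega$ contributions are handled exactly as in the proof of Theorem \ref{mainth}, using that everything vanishes off $\Omega$ and the kernel is bounded away from the diagonal there), while $T_k(u(x))-T_k(u(y))\in L^{p'}$-type bounds make this term converge to $\frac12\iint_{D_\Omega}U(x,y)\big(T_k(u(x))-T_k(u(y))\big)\,d\nu$, which in turn equals $\io f\,T_k(u)\,dx$ after a further limiting argument — or, more directly, one uses weak lower semicontinuity to dispose of cross terms. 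The upshot is
\begin{equation*}
\limsup_{n\to\infty}\,\frac12\iint_{D_\Omega} U_n(x,y)\big(T_k(u_n(x))-T_k(u_n(y))\big)\,d\nu\le \frac12\iint_{D_\Omega} U(x,y)\big(T_k(u(x))-T_k(u(y))\big)\,d\nu.
\end{equation*}
Now invoke the algebraic inequality \eqref{general}: the integrand on the left dominates $|T_k(u_n(x))-T_k(u_n(y))|^p$, so this controls $\limsup_n |||T_k(u_n)|||^p$ from above by $\frac12\iint_{D_\Omega}U(x,y)(T_k(u(x))-T_k(u(y)))\,d\nu$. Combining with Fatou's lemma (weak lower semicontinuity of the seminorm) applied to the weak convergence $T_k(u_n)\weakly T_k(u)$, and identifying the limit quantity with $|||T_k(u)|||^p$ via the same \eqref{general} inequality evaluated in the limit, we obtain $|||T_k(u_n)|||\to|||T_k(u)|||$; together with weak convergence in the uniformly convex space $W^{s,p}_{\beta,0}(\Omega)$ this forces strong convergence.

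The main obstacle I expect is the careful justification that $T_k(u)$ (or the differences $\varphi_n$) may legitimately be used in the weak formulation and, relatedly, that the bilinear-type term $\iint U_n(x,y)(T_k(u(x))-T_k(u(y)))\,d\nu$ passes to the limit correctly: $T_k(u)$ is only a limit of test functions, not smooth, so one must argue by density in $W^{s,p}_{\beta,0}(\Omega)\cap L^\infty(\Omega)$ and control the tails (the region $R_h$ in Definition \ref{def:entropy}) using the estimates \eqref{two11}–\eqref{two21}. The monotonicity $u_n\uparrow u$ is what makes these tail estimates uniform and lets one avoid the more delicate truncation arguments needed in the non-monotone case; absent it, the cross-term analysis would be substantially harder.
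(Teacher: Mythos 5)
Your overall strategy (test \eqref{pro:lineal1} with $T_k(u_n)-T_k(u)$, use the monotonicity $u_n\uparrow u$, pass to the limit, and conclude from weak convergence plus convergence of norms in a uniformly convex space) is in the right family of ideas, but the decisive step is wrong as written. Your chain yields at best
$\limsup_n\frac12\iint_{D_\O}U_n(x,y)\big(T_k(u_n(x))-T_k(u_n(y))\big)d\nu\le\frac12\iint_{D_\O}U(x,y)\big(T_k(u(x))-T_k(u(y))\big)d\nu$,
and you then propose to identify the right-hand side with $|||T_k(u)|||^p_{W^{s,p}_{\beta,0}(\O)}$ ``via \eqref{general} evaluated in the limit''. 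But \eqref{general} gives $U(x,y)\big(T_k(u(x))-T_k(u(y))\big)\ge|T_k(u(x))-T_k(u(y))|^p$, i.e. an inequality in the wrong direction: the right-hand side dominates the seminorm and is in general strictly larger (e.g. where $u(x)>k>u(y)$ one has $(u(x)-u(y))^{p-1}(k-u(y))\gneq(k-u(y))^p$). So you only get $\limsup_n|||T_k(u_n)|||^p\le\frac12\iint_{D_\O}U\,(T_k(u)(x)-T_k(u)(y))d\nu$, which is not $\le|||T_k(u)|||^p$, and the uniform-convexity conclusion does not follow. The variant ``$=\io f\,T_k(u)\,dx$'' does not help: using $T_k(u)$ as a test function in the limit equation is precisely what is not yet available at this stage (it is established later, for the entropy solution, through Lemma \ref{ness}), and that quantity again exceeds $|||T_k(u)|||^p$ in general.

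The paper (Lemma \ref{compact}) repairs exactly this point with an algebraic cancellation you are missing: insert $\mathrm{T}_{n,k}(x,y)=|T_k(u_n(x))-T_k(u_n(y))|^{p-2}(T_k(u_n(x))-T_k(u_n(y)))$ and write $U_n=\mathrm{T}_{n,k}+(U_n-\mathrm{T}_{n,k})$ in \emph{both} terms of your splitting, apply Young's inequality to $\iint\mathrm{T}_{n,k}\,(T_k(u(x))-T_k(u(y)))d\nu$, and group the two remainders into the single term $K_n=[U_n-\mathrm{T}_{n,k}]\big[(T_k(u_n(x))-T_k(u(x)))-(T_k(u_n(y))-T_k(u(y)))\big]$, which is nonnegative pointwise by a case analysis using only $u_n\le u$; dropping $K_n\ge0$ gives the clean bound $\limsup_n\iint_{D_\O}|T_k(u_n(x))-T_k(u_n(y))|^pd\nu\le\iint_{D_\O}|T_k(u(x))-T_k(u(y))|^pd\nu$, which is what uniform convexity needs. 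Note also that the paper never passes to the limit in $\iint_{D_\O}U_n(x,y)(T_k(u(x))-T_k(u(y)))d\nu$; it only uses the sign $\langle(-\Delta)^s_{p,\beta}u_n,\,T_k(u_n)-T_k(u)\rangle\le0$, coming from $f_n\ge0$ and $T_k(u_n)\le T_k(u)$. Your proposed limit passage ``exactly as in Theorem \ref{mainth}'' is not available here: there the test function was compactly supported in $\O$, which made the kernel bounded on $\mathrm{Supp}\,\phi\times(B_R\setminus\O)$, whereas $T_k(u)$ has no such support property and $\int_{\ren\setminus\O}|x-y|^{-N-ps}|y|^{-\beta}dy$ blows up as $x$ approaches $\partial\O$, so the dominated-convergence control for the terms on $\O\times(\ren\setminus\O)$ is missing as well.
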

The proof of Lemma \ref{convv} will be a consequence of the next more general compactness result.
\begin{Lemma}\label{compact}
Let $\{u_n\}_n\subset W^{s,p}_{\beta, 0}(\Omega)$ be an increasing sequence such that $u_n\ge 0$ and
$(-\Delta)^s_{p, \beta} u_n\ge 0$. Assume that $\{T_k(u_n)\}_n$ is bounded in $ W^{s,p}_{\beta, 0}(\Omega)$ for all $k>0$, then there exists a measurable function $u$ such that $u_n\uparrow u$ a.e. in $\O,\:\: T_k(u)\in W^{s,p}_{\beta, 0}(\Omega)$ for all $k>0$ and
\begin{equation}\label{ty}
T_k(u_n)\longrightarrow T_k(u)\:\:  \text{ strongly in } W^{s,p}_{\beta, 0}(\Omega).
\end{equation}
\end{Lemma}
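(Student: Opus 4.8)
The plan is to obtain $u$ as the monotone pointwise limit of $\{u_n\}$, to prove weak convergence of the truncations $T_k(u_n)$ to $T_k(u)$ in $W^{s,p}_{\beta,0}(\Omega)$, and then to upgrade this to strong convergence by showing that the corresponding Gagliardo seminorms converge and invoking the uniform convexity of $W^{s,p}_{\beta,0}(\Omega)$.

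\medskip
\noindent\textbf{Step 1 (the limit and weak convergence).} Since $\{u_n\}$ is nondecreasing and nonnegative, $u(x):=\lim_n u_n(x)\in[0,+\infty]$ exists and is measurable. Because $\{T_k(u_n)\}_n$ is bounded in $W^{s,p}_{\beta,0}(\Omega)$, Theorem~\ref{Sobolev} bounds it in the weighted $L^{p^{*}_{s}}$ space, and then, arguing as in the proof of Lemma~\ref{one}, $\mu\{|u_n|\ge k\}$ is controlled uniformly in $n$; hence $u<+\infty$ a.e., $u=0$ a.e.\ in $\ren\setminus\Omega$, and $T_k(u_n)\to T_k(u)$ a.e. Since $W^{s,p}_{\beta,0}(\Omega)$ is reflexive and compactly embedded in $L^p(\Omega,d\mu)$, the a.e.\ limit identifies the weak limit, so $T_k(u_n)\rightharpoonup T_k(u)$ in $W^{s,p}_{\beta,0}(\Omega)$ and $T_k(u)\in W^{s,p}_{\beta,0}(\Omega)$ for all $k>0$. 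Writing $A_n:=\iint_{D_\Omega}|T_k(u_n(x))-T_k(u_n(y))|^p\,d\nu$ and $L:=\iint_{D_\Omega}|T_k(u(x))-T_k(u(y))|^p\,d\nu$, weak lower semicontinuity gives $\liminf_n A_n\ge L$; by Remark~\ref{equiv} and uniform convexity it then suffices to prove $\limsup_n A_n\le L$.

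\medskip
\noindent\textbf{Step 2 (the test function and the splitting).} To bound $A_n$ from above I would use that $u_n\le u$, so $\varphi_n:=T_k(u)-T_k(u_n)\ge0$ lies in $W^{s,p}_{\beta,0}(\Omega)\cap L^\infty(\Omega)$ and is admissible in $(-\Delta)^s_{p,\beta}u_n\ge0$; testing with $\varphi_n$ and applying the algebraic inequality \eqref{general} (with $a=u_n(x)$, $b=u_n(y)$) yields
$$
A_n\ \le\ \iint_{D_\Omega}U_n(x,y)\big(T_k(u(x))-T_k(u(y))\big)\,d\nu,\qquad U_n(x,y):=|u_n(x)-u_n(y)|^{p-2}(u_n(x)-u_n(y)).
$$
I would then write $U_n=U_n^{(k)}+(U_n-U_n^{(k)})$ with $U_n^{(k)}(x,y):=|T_k(u_n(x))-T_k(u_n(y))|^{p-2}(T_k(u_n(x))-T_k(u_n(y)))$. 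The family $\{U_n^{(k)}\}_n$ is bounded in $L^{p'}(D_\Omega,d\nu)$ (its $p'$-th power of norm is $A_n$) and converges a.e.\ to $U^{(k)}:=|T_k(u(x))-T_k(u(y))|^{p-2}(T_k(u(x))-T_k(u(y)))$, hence $U_n^{(k)}\rightharpoonup U^{(k)}$ in $L^{p'}(D_\Omega,d\nu)$; since $T_k(u(x))-T_k(u(y))$ is a fixed element of $L^p(D_\Omega,d\nu)$, $\iint_{D_\Omega}U_n^{(k)}(T_k(u(x))-T_k(u(y)))\,d\nu\to L$. The remaining ``mass at infinity'' term $\iint_{D_\Omega}(U_n-U_n^{(k)})(T_k(u(x))-T_k(u(y)))\,d\nu$ is supported where $\max\{u_n(x),u_n(y)\}>k$ and, since $T_k(u(x))-T_k(u(y))$ vanishes when $\min\{u(x),u(y)\}\ge k$, also where $\min\{u(x),u(y)\}<k$.

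\medskip
\noindent\textbf{Step 3 (controlling the interaction term and conclusion).} This last term is the delicate one. Decomposing $T_k(u(x))-T_k(u(y))=(T_k(u_n(x))-T_k(u_n(y)))+(\varphi_n(x)-\varphi_n(y))$, it splits into a nonnegative part plus $2\langle(-\Delta)^s_{p,\beta}u_n,\varphi_n\rangle$ minus $\iint_{D_\Omega}U_n^{(k)}(\varphi_n(x)-\varphi_n(y))\,d\nu$, and one shows the whole combination tends to $0$: the last integral vanishes by Vitali's theorem (using the $L^{p'}$ bound on $U_n^{(k)}$, that $\varphi_n(x)-\varphi_n(y)\to0$ a.e.\ with $0\le\varphi_n\le k$, and that the monotonicity together with the supersolution property prevents concentration on the diagonal), while $\langle(-\Delta)^s_{p,\beta}u_n,\varphi_n\rangle\to0$ — in the situation of Lemma~\ref{convv}, where $u_n$ additionally solves \eqref{proOO} with $f_n=T_n(f)$, this pairing equals $\int_\Omega f_n\varphi_n\,dx$, which tends to $0$ because $\{f_n\}$ is equi-integrable and $\varphi_n\to0$ a.e.\ with $0\le\varphi_n\le k$. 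Granting these limits, $\limsup_n A_n\le L$, hence $A_n\to L$; combined with $T_k(u_n)\rightharpoonup T_k(u)$ and the uniform convexity of $W^{s,p}_{\beta,0}(\Omega)$ this gives $T_k(u_n)\to T_k(u)$ strongly, which is \eqref{ty}.

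\medskip
\noindent The principal obstacle is Step~3 — the loss of compactness on the set where $u_n$ diverges — and it is exactly there that the monotonicity of $\{u_n\}$ and the (super)solution character of the $u_n$ are needed; everything else is soft functional analysis together with the algebraic inequalities of Lemma~\ref{algg}.
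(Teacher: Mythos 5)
Your Steps 1 and 2 follow the paper: monotone limit, weak convergence of the truncations, testing the supersolution inequality against $T_k(u)-T_k(u_n)$ and using \eqref{general}. The genuine gap is in Step 3. Write $\mathrm{T}_{n,k}(x,y)=|T_k(u_n(x))-T_k(u_n(y))|^{p-2}(T_k(u_n(x))-T_k(u_n(y)))$ (your $U_n^{(k)}$) and $R_n=\iint_{D_\Omega}\big(U_n-\mathrm{T}_{n,k}\big)\big(T_k(u(x))-T_k(u(y))\big)\,d\nu$. Your bound $A_n\le\iint_{D_\Omega}U_n\,(T_k(u(x))-T_k(u(y)))\,d\nu$ discards the nonnegative term $P_n=\iint_{D_\Omega}(U_n-\mathrm{T}_{n,k})(T_k(u_n(x))-T_k(u_n(y)))\,d\nu$ produced by \eqref{general}, and you are then forced to prove $\limsup_n R_n\le 0$. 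This is false in general: the integrand of $R_n$ is pointwise nonnegative (it vanishes when both $u_n(x),u_n(y)\le k$ and when both are $\ge k$, since then $u\ge u_n\ge k$ and $T_k(u(x))=T_k(u(y))=k$; on $\{u_n(x)\ge k\ge u_n(y)\}$ both factors are $\ge 0$, symmetrically on the other mixed set), and by Fatou $\liminf_n R_n\ge \iint_{\{u(x)>k>u(y)\}}\big[(u(x)-u(y))^{p-1}-(k-u(y))^{p-1}\big](k-u(y))\,d\nu$, which is strictly positive whenever $u$ crosses the level $k$ on a set of positive $\nu$-measure — the typical situation here. The two limits you invoke do not rescue this: (a) the integral $\iint_{D_\Omega}\mathrm{T}_{n,k}\,(\varphi_n(x)-\varphi_n(y))\,d\nu$ pairs a sequence converging only weakly in $L^{p'}(D_\Omega,d\nu)$ with one converging only weakly to $0$ in $L^{p}(D_\Omega,d\nu)$; Vitali is not applicable because uniform integrability of $|\varphi_n(x)-\varphi_n(y)|^p$ is exactly what is missing, and indeed this integral equals $\iint_{D_\Omega}\mathrm{T}_{n,k}(T_k(u(x))-T_k(u(y)))\,d\nu-A_n\to L-\lim_n A_n$, so asserting that it vanishes is asserting the conclusion; (b) $\langle(-\Delta)^s_{p,\beta}u_n,\varphi_n\rangle\to0$ uses the equation \eqref{proOO}, which is not among the hypotheses of Lemma \ref{compact} (only $(-\Delta)^s_{p,\beta}u_n\ge0$ is assumed), and even granting it the nonnegative part $P_n$ remains unaccounted for.

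The repair is the paper's pointwise sign argument: do not drop $P_n$. Keeping it, the inequality of Step 2 reads
\begin{equation*}
A_n+\iint_{D_\Omega}K_n\,d\nu\le \iint_{D_\Omega}\mathrm{T}_{n,k}\,\big(T_k(u(x))-T_k(u(y))\big)\,d\nu,
\qquad
K_n=\big(U_n-\mathrm{T}_{n,k}\big)\big[(T_k(u_n(x))-T_k(u(x)))-(T_k(u_n(y))-T_k(u(y)))\big],
\end{equation*}
and the whole point is that $K_n\ge0$ a.e.\ in $D_\Omega$, by the elementary case analysis over the four regions determined by $u_n(x)\lessgtr k$ and $u_n(y)\lessgtr k$, using only the monotonicity $u\ge u_n$ (no limit of the bad term is needed — only its sign). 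Dropping $\iint K_n\ge0$ and letting $n\to\infty$ in the right-hand side then gives $\limsup_n A_n\le L$; here your observation that $\mathrm{T}_{n,k}\rightharpoonup |T_k(u(x))-T_k(u(y))|^{p-2}(T_k(u(x))-T_k(u(y)))$ weakly in $L^{p'}(D_\Omega,d\nu)$ is a perfectly good (and slightly cleaner) substitute for the Young-inequality step used in the paper, since $T_k(u(x))-T_k(u(y))$ is a fixed element of $L^p(D_\Omega,d\nu)$. Together with weak convergence and uniform convexity, as in your Step 1, this yields \eqref{ty}.
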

\begin{pf}
Since $\{T_k(u_n)\}_n$ is bounded in $ W^{s,p}_{\beta, 0}(\Omega)$, then using the monotony of the sequence $\{u_n\}_n$, we get the existence of a measurable function $u$ such that $u_n\uparrow u$ a.e. in $\O, T_k(u)\in W^{s,p}_{\beta, 0}(\Omega)$ and $T_k(u_n)\rightharpoonup T_k(u)$ weakly in $W^{s,p}_{\beta, 0}(\Omega)$.
Since $(-\Delta)^s_{p, \beta} u_n\ge 0$, then
$$
\langle (-\Delta)^s_{p, \beta} u_n, T_k(u_n)-T_k(u)\rangle\le 0.
$$
Thus
\begin{eqnarray*}
& \dyle \iint_{D_{\O}} \,|u_n(x)-u_n(y)|^{p-2}(u_n(x)-u_n(y))(T_k(u_n(x))-T_k(u_n(y)))d\nu\\
&\le \dyle\iint_{D_{\O}} \,|u_n(x)-u_n(y)|^{p-2}(u_n(x)-u_n(y))(T_k(u(x))-T_k(u(y)))d\nu.
\end{eqnarray*}
Define
$$
I_{1,n}\equiv \dyle \iint_{D_{\O}} \,|u_n(x)-u_n(y)|^{p-2}(u_n(x)-u_n(y))(T_k(u_n(x))-T_k(u_n(y)))d\nu,
$$
and
$$
I_{2,n}\equiv \dyle\iint_{D_{\O}} \,|u_n(x)-u_n(y)|^{p-2}(u_n(x)-u_n(y))(T_k(u(x))-T_k(u(y)))d\nu.
$$
For simplicity of typing we set
$$
\mathrm{T}_{n, k}(x,y)\equiv |T_k(u_n(x))-T_k(u_n(y))|^{p-2}(T_k(u_n(x))-T_k(u_n(y))).
$$
We have
\begin{eqnarray*}
I_{1,n} &= &\iint_{D_{\O}} \,|T_k(u_n(x))-T_k(u_n(y))|^pd\nu\\
& + & \iint_{D_{\O}} \,\Big[U_n(x,y)-\mathrm{T}_{n, k}(x,y)\Big](T_k(u_n(x))-T_k(u_n(y)))d\nu.
\end{eqnarray*}
In the same way, using Young inequality, we obtain that
\begin{eqnarray*}
I_{2,n} &= & \iint_{D_{\O}} \, \mathrm{T}_{n, k}(x,y)(T_k(u(x))-T_k(u(y)))d\nu\\
&+& \iint_{D_{\O}} \,\Big[U_n(x,y)-\mathrm{T}_{n, k}(x,y)\Big](T_k(u(x))-T_k(u(y)))d\nu\\
&\le & \frac{p-1}{p}\iint_{D_{\O}} \,|T_k(u_n(x))-T_k(u_n(y))|^pd\nu+\frac 1p\iint_{D_{\O}} \, |T_k(u(x))-T_k(u(y))|^pd\nu\\
&+& \iint_{D_{\O}} \,\Big[U_n(x,y)-\mathrm{T}_{n, k}(x,y)\Big](T_k(u(x))-T_k(u(y)))d\nu.
\end{eqnarray*}
Combining the above estimates, it follows that
\begin{eqnarray*}
& & \frac{1}{p}\iint_{D_{\O}} \,|T_k(u_n(x))-T_k(u_n(y))|^pd\nu\\
&+ & \iint_{D_{\O}} \,\Big[U_n(x,y)-\mathrm{T}_{n, k}(x,y)\Big]\Big[(T_k(u_n(x))-T_k(u(x)))-(T_k(u_n(y))-T_k(u(y)))\Big]d\nu\\
&\le & \frac 1p\iint_{D_{\O}} \, |T_k(u(x))-T_k(u(y))|^pd\nu.
\end{eqnarray*}
Define
$$
K_n(x,y)\equiv \Big[U_n(x,y)-\mathrm{T}_{n, k}(x,y)\Big]\Big[(T_k(u_n(x))-T_k(u(x)))-(T_k(u_n(y))-T_k(u(y)))\Big].
$$
We claim that $K_n(x,y)\ge 0\:a.e\mbox{  in   }D_{\O}$. To see that we set
$$D_1=\{(x,y)\in D_{\O}: u_n(x)\le k, u_n(y) \le k \}, \  D_2=\{(x,y)\in D_{\O}: u_n(x)\ge k, u_n(y) \ge k \}.$$
$$D_3=\{(x,y)\in D_{\O}: u_n(x)\ge k, u_n(y) \le k \}, \ D_4=\{(x,y)\in D_{\O}: u_n(x)\le k, u_n(y) \ge k \},$$
then
$D_{\O}=D_1\cup D_2\cup D_3\cup D_4$.

In $D_1$, we have $U_n(x,y)-\mathrm{T}_{n, k}(x,y)=0$, then $K_n(x,y)=0$.  In the same way,
if  $(x,y)\in D_2$, we have $u(x)\ge u_n(x)\ge k$ and $u(y)\ge u_n(y)\ge k$, then
$[(T_k(u_n(x))-T_k(u(x)))-(T_k(u_n(y))-T_k(u(y)))\Big]=0$. Thus $K_n(x,y)=0$ in $D_2$.

Assume that $(x,y)\in D_3$, then $$
U_n(x,y)-\mathrm{T}_{n, k}(x,y)=(u_n(x)-u_n(y))^{p-1}-(k-u_n(y))^{p-1}\ge 0.
$$
Since
$$
[(T_k(u_n(x))-T_k(u(x)))-(T_k(u_n(y))-T_k(u(y)))\Big]=-(T_k(u_n(y))-T_k(u(y)))\ge 0,
$$
it follows that $K_n(x,y)\ge 0$ in $D_3$. In the same way we reach that $K_n(x,y)\ge 0$ in $D_4$. Hence the claim follows.
As a conclusion we get
$$
\limsup_{n\to \infty}\iint_{D_{\O}} \,|T_k(u_n(x))-T_k(u_n(y))|^pd\nu\le \iint_{D_{\O}} \,|T_k(u(x))-T_k(u(y))|^pd\nu.
$$
Since $T_k(u_n)\rightharpoonup T_k(u)$ weakly in $W^{s,p}_{\beta, 0}(\Omega)$, then $T_k(u_n)\to T_k(u)$ strongly in $W^{s,p}_{\beta, 0}(\Omega)$ and the result follows.
\end{pf}

\begin{remark}\label{RR}

\

\begin{enumerate}
\item As a consequence of the previous strong convergence we reach that
$$
\iint_{D_{\O}} K_n(x,y)d\nu\to 0\mbox{   as   }n\to \infty.
$$
\item Let $w_n=1-\dfrac{1}{1+u_n}$, using $w_n$ as a test function in \eqref{pro:lineal1},
$$
\iint_{D_\O}
\,\dfrac{|u_n(x)-u_n(y)|^{p}}{(1+u_n(x))(1+u_n(y))}d\nu=\dyle \io f_n(x)w_n(x)dx\to \dyle \io f(x)w(x)dx\mbox{  as }n\to \infty
,$$
where $w=1-\dfrac{1}{1+u}$. For $k>0$ fixed, we define the sets
$$A_n=D_{\O}\cap \{u_n(x)\ge 2k, u_n(y)\le k\}\mbox{  and  } A=D_{\O}\cap \{u(x)\ge 2k, u(y)\le k\}.$$
It is clear that for $(x,y)\in A_n$, we have
$u_n(x)-u_n(y)\ge \frac 12 u_n(x)$. Thus
\begin{equation}\label{RTR}
\iint_{D_{\O}}
u^{p-1}_n(x)\chi_{A_n}(x,y)d\nu\le C(k)
\iint_{D_\O}
\,\dfrac{|u_n(x)-u_n(y)|^{p}}{(1+u_n(x))(1+u_n(y))}d\nu<\bar{C}(k).
\end{equation}
Since $u_n(x)\chi_{\{A_n\}}(x,y)\to u(x)\chi_{\{A\}}\:a.e$ in $D_{\O}$, then if $p>2$, we get
$$
u_n(x)\chi_{A_n}(x,y)  \rightharpoonup u(x)\chi_A \mbox{  weakly  in   }L^{p-1}(D_\O, d\nu).
$$

\item From \eqref{RTR} we conclude that
$$
\nu\{D_{\O}\cap A_n\}\equiv \iint_{D_{\O}\cap A_n}d\nu\le \tilde{C}(k).
$$
Hence by Fatou's lemma, we reach that
$$
\nu\{D_{\O}\cap A\}\equiv \iint_{D_{\O}\cap A}d\nu\le \tilde{C}(k).
$$

\end{enumerate}
\end{remark}

\

Now, we are in position to prove the existence and the uniqueness of the entropy solution.

\

\noindent {\bf{Proof of Theorem \ref{entropi}: Existence part:}}

\

 It is clear that the existence of $u$ follows using Theorem \ref{mainth}, however the strong convergence of $\{T_k(u_n)\}_n$ in the space $W^{s,p}_{\beta, 0}(\Omega)$ is a consequence of Lemma \ref{convv}. To finish we just need to show that $u$ is an entropy solution to problem \eqref{main00} in the sense of Definition \ref{def:entropy}.

Since $u, u_n\ge 0$, then the set $R_h$ given in the Definition \ref{def:entropy} is reduced to
$$
R_h=\bigg\{(x,y)\in \ren\times \ren: h+1\le \max\{u(x),u(y)\}\mbox{  with  } \min\{u(x),u(y)\}\le h\bigg\}. $$
Using $T_1(G_h(u_n))$ as a test function in \eqref{pro:lineal1}, it follows that
\begin{eqnarray*}
&\dfrac 12\dyle \iint_{D_\O}
\,|u_n(x)-u_n(y)|^{p-2}(u_n(x)-u_n(y))[T_1(G_h(u_n(x)))-T_1(G_h(u_n(y)))]d\nu= \\
&\dyle \io f_n(x)T_1(G_h(u_n(x)))\, dx\le \int_{u_n\ge h}f_n(x)dx.
\end{eqnarray*}
It is not difficult to show that, for $(x,y)\in R_h$, we have
$$
|u_n(x)-u_n(y)|^{p-2}(u_n(x)-u_n(y))[T_1(G_h(u_n(x)))-T_1(G_h(u_n(y)))]\ge 0.
$$
Thus, using Fatou's lemma, we conclude that
\begin{eqnarray*}
&\dfrac 12\dyle \iint_{D_\O}
\,|u(x)-u(y)|^{p-2}(u(x)-u(y))[T_1(G_h(u(x)))-T_1(G_h(u(y)))]d\nu\le \\
&\dyle \liminf_{n\to \infty} \dfrac 12\dyle \iint_{D_\O}
\,|u_n(x)-u_n(y)|^{p-2}(u_n(x)-u_n(y))[T_1(G_h(u_n(x)))-T_1(G_h(u_n(y)))]d\nu\\
&\le \dyle \io f(x)T_1(G_h(u(x)))\, dx\le \int_{u\ge h}f(x)dx.
\end{eqnarray*}
It is clear that for all $(x,y)\in R_h$, we have
$$
|u(x)-u(y)|^{p-2}(u(x)-u(y))[T_1(G_h(u(x)))-T_1(G_h(u(y)))]\ge |u(x)-u(y)|^{p-1},
$$
therefore, using the fact that
$$
\int_{u\ge h}f(x)dx\to 0\mbox{   as   }h\to \infty,
$$
we conclude that
$$
\iint_{R_h}
\,|u(x)-u(y)|^{p-1}d\nu \to 0\mbox{   as   }h\to \infty.
$$
Hence \eqref{entro001} holds.

Recall that
$$
U_n(x,y)=|u_n(x)-u_n(y)|^{p-2}(u_n(x)-u_n(y))\mbox{  and  }U(x,y)=|u(x)-u(y)|^{p-2}(u(x)-u(y)).
$$
Let $v\in W^{s,p}_{\beta, 0}(\O) \cap L^{\infty}(\O)$, taking $T_k(u_n-v)$ as a test function in \eqref{pro:lineal1}, we reach that
\begin{eqnarray*}
&\dfrac 12\dyle \iint_{D_\O}
\, U_n(x,y)[T_k(u_n(x)-v(x))-T_k(u_n(y)-v(y))]d\nu= \\
&\dyle \io f_n(x)T_k(u_n(x)-v(x)) \, dx.
\end{eqnarray*}
It is to see that
$$
\dyle \io f_n(x)T_k(u_n(x)-v(x)) \, dx\to \dyle \io f(x)T_k(u(x)-v(x)) \, dx\mbox{   as   }n\to \infty.
$$
We deal now with the first term. We have
$$
U_n(x,y)[T_k(u_n(x)-v(x))-T_k(u_n(y)-v(y))]=:K_{1,n}(x,y)+K_{2,n}(x,y)
,$$
where
\begin{eqnarray*}
K_{1,n}(x,y) &=& |(u_n(x)-v(x))-(u_n(y)-v(y))|^{p-2}((u_n(x)-v(x))-(u_n(y)-v(y)))\\
&\times  & [T_k(u_n(x)-v(x))-T_k(u_n(y)-v(y))],
\end{eqnarray*}
and

\begin{eqnarray*}
&K_{2,n}(x,y)=\\
& \Big[U_n(x,y)-|(u_n(x)-v(x))-(u_n(y)-v(y))|^{p-2}((u_n(x)-v(x))-(u_n(y)-v(y)))\Big]\\ &\times [T_k(u_n(x)-v(x))-T_k(u_n(y)-v(y))].
\end{eqnarray*}
It is clear that $K_{1,n}(x,y)\ge 0$ \ a.e. in $D_\Omega$, since
\begin{eqnarray*}
K_{1,n}(x,y)&\to & |(u(x)-v(x))-(u(y)-v(y))|^{p-2}((u(x)-v(x))-(u(y)-v(y)))\\
&\times  & [T_k(u(x)-v(x))-T_k(u(y)-v(y))] \  a.e. \mbox{  in   }D_{\Omega},
\end{eqnarray*}
as $n\to \infty$, using Fatou's Lemma, we obtain that
\begin{eqnarray*}
&\dyle \iint_{D_{\O}}K_{1,n}(x,y)d\nu\ge\\
&\dyle \iint_{D_{\O}}\Big[(u(x)-v(x))-(u(y)-v(y))|^{p-2}((u(x)-v(x))-(u(y)-v(y)))\Big]\\
&\times[T_k(u(x)-v(x))-T_k(u(y)-v(y))]d\nu.
\end{eqnarray*}
We deal now with $K_{2,n}$.

We set
$$
w_n=u_n-v,\:w=u-v, \:\:\s_1(x,y)=u_n(x)-u_n(y)\mbox{  and  }\s_2(x,y)=w_n(x)-w_n(y).$$ Then
$$
K_{2,n}(x,y)=\Big[|\s_1(x,y)|^{p-2}\s_1(x,y)-|\s_2(x,y)|^{p-2}\s_2(x,y)\Big]\times [T_k(u_n(x)-v(x))-T_k(u_n(y)-v(y))].
$$
We claim that
\begin{equation}\label{cll}
\begin{array}{lll}
&\dyle \iint_{D_{\O}}K_{2,n}(x,y)d\nu\to \\
& \dyle \iint_{D_{\O}}\Big[U(x,y)-|(u(x)-v(x))-(u(y)-v(y))|^{p-2}((u(x)-v(x))-(u(y)-v(y)))\Big]\\ \\ &\times [T_k(u(x)-v(x))-T_k(u(y)-v(y))]d\nu\mbox{    as    }\:\:n\to \infty.
\end{array}
\end{equation}
We divide the proof of the claim into two cases according to the value of $p$.

\

\noindent {\bf \emph{The singular case $p\in (1,2]$:}}
In this case we have
$$
\Big||\s_1(x,y)|^{p-2}\s_1(x,y)-|\s_2(x,y)|^{p-2}\s_2(x,y)\Big|\le C|\s_1(x,y)-\s_2(x,y)|^{p-1}=C|v(x)-v(y)|^{p-1}.
$$
Thus
$$
|K_{2,n}(x,y)|\le C|v(x)-v(y)|^{p-1}|T_k(u_n(x)-v(x))-T_k(u_n(y)-v(y))|\equiv \tilde{K}_{2,n}(x,y).
$$
Using the fact that $T_k(u_n)\to T_k(u)$ strongly in $W^{s,p}_{\beta, 0}(\Omega)$, then since $v\in  W^{s,p}_{\beta, 0}(\O) \cap L^{\infty}(\O)$, we get
$$
\tilde{K}_{2,n}\to C|v(x)-v(y)|^{p-1}|T_k(u(x)-v(x))-T_k(u(y)-v(y))|\mbox{  strongly  in  }L^1(D_\O, d\nu).
$$
Using the Dominated Convergence theorem we reach that
\begin{eqnarray*}
&\dyle \iint_{D_{\O}}K_{2,n}(x,y)d\nu\to \\
& \dyle \iint_{D_{\O}}\Big[U(x,y)-|(u(x)-v(x))-(u(y)-v(y))|^{p-2}((u(x)-v(x))-(u(y)-v(y)))\Big]\\ &\times [T_k(u(x)-v(x))-T_k(u(y)-v(y))]d\nu,
\end{eqnarray*}
as $n\to \infty$ and the claim follows in this case.

\

\noindent {\bf{\emph{The degenerate case $p>2$:}}} This case is more relevant. As in the previous case, we have
\begin{eqnarray*}
& \Big||\s_1(x,y)|^{p-2}\s_1(x,y)-|\s_2(x,y)|^{p-2}\s_2(x,y)\Big|\\
&\le C_1|\s_1(x,y)-\s_2(x,y)|^{p-1}+C_2|\s_2(x,y)|^{p-2}|\s_1(x,y)-\s_2(x,y)|\\
&\le C_1|v(x)-v(y)|^{p-1} +C_2|v(x)-v(y)||w_n(x)-w_n(y)|^{p-2}\\
&\le C_1|v(x)-v(y)|^{p-1} +C_2|v(x)-v(y)||u_n(x)-u_n(y)|^{p-2}.
\end{eqnarray*}
Thus
\begin{eqnarray*}
|K_{2,n}(x,y)|& \le & C_1|v(x)-v(y)|^{p-1}|T_k(u_n(x)-v(x))-T_k(u_n(y)-v(y))|\\
&+ & C_2|v(x)-v(y)||u_n(x)-u_n(y)|^{p-2}|T_k(w_n(x))-T_k(w_n(y))|\\
&\equiv & \bar{K}_{2,n}(x,y)+\check{K}_{2,n}(x,y).
\end{eqnarray*}
The term $\bar{K}_{2,n}(x,y)$ can be treated as $\tilde{K}_{2,n}$ above. Hence it remains to deal with $\check{K}_{2,n}(x,y)$.

We define
$$D_1=\{(x,y)\in D_{\O}: u_n(x)\le \tilde{k}, u_n(y) \le \tilde{k} \},$$ where $\tilde{k}>>k+||v||_\infty$ is a large constant. Using the fact that $T_{\tilde{k}}(u_n)\to T_{\tilde{k}}(u)$ strongly in $W^{s,p}_{\beta, 0}(\Omega)$, we obtain that
$$
\check{K}_{2,n}(x,y)\chi_{D_1}\to C_2|v(x)-v(y)||u(x)-u(y)|^{p-2}|T_k(w(x))-T_k(w(y))|\chi_{\{u(x)\le \tilde{k}, u(y)\le \tilde{k}\}}$$
strongly  in  $L^1(D_{\O},d\nu)$.

\

Now, consider the set
$$D_2=\{(x,y)\in D_{\O}: u_n(x)\ge k_1, u_n(y) \ge k_1 \},$$ where $k_1>k+||v||_\infty$, then $\check{K}_{2,n}(x,y)\chi_{D_2}(x,y)=0$.

\

Hence we just have to deal with the set of the form
$$D_3=\{(x,y)\in D_{\O}: u_n(x)\ge 2k, u_n(y) \le k \},$$ or
$$D_4=\{(x,y)\in D_{\O}: u_n(y)\ge 2k, u_n(x) \le k \}.$$

We will use Remark \ref{RR} and a duality argument.

It is clear that for  $(x,y)\in D_3$, we have
$$
\check{K}_{2,n}(x,y)\chi_{D_3}(x,y)\le C(k)|v(x)-v(y)||T_k(w_n(x))-T_k(w_n(y))|u^{p-2}_n(x)\chi_{D_3}(x,y).
$$
From Remark \ref{RR}, we know that
$$
u^{p-2}_n(x)\chi_{D_3}(x,y)\rightharpoonup u^{p-2}(x)\chi_{\{u(x)\ge 2k, u(y)\le k\}} \mbox{  weakly  in   }L^{\frac{p-1}{p-2}}(D_\O, d\nu).
$$
Since
\begin{equation*}
\begin{split}
&\Big[|v(x)-v(y)||T_k(w_n(x)-T_k(w_n(y)|\Big]^{p-1}\le \frac{p-1}{p}|T_k(w_n(x))-T_k(w_n(y))|^p+\frac{1}{p}|v(x)-v(y)|^{p(p-1)}\\
&\le \frac{p-1}{p}|T_k(w_n(x))-T_k(w_n(y))|^p+\frac{1}{p}(2||v||_\infty)^{p(p-2)}|v(x)-v(y)|^{p}\\
&=: L_n(x,y).
\end{split}
\end{equation*}
Clearly $L_n\to L  \mbox{ strongly   in   }L^1(D_\O, d\nu)$ with
$$
L(x,y)=\frac{p-1}{p}|T_k(w(x))-T_k(w(y))|^p+\frac{1}{p}(2||v||_\infty)^{p(p-2)}|v(x)-v(y)|^{p}.
$$
Thus
$$
\check{K}_{2,n}\chi_{D_3}\to C_2|v(x)-v(y)||u(x)-u(y)|^{p-2}|T_k(w(x))-T_k(w(y))|\chi_{\{u(x)\ge 2k, u(y)\le k\}}$$
strongly  in $L^1(D_{\O},d\nu)$.

\

In the same way we can treat the set $D_4$.

Therefore, combining the above estimates and using the Dominate Convergence theorem, we conclude that
\begin{eqnarray*}
&\dyle \iint_{D_{\O}}K_{2,n}(x,y)d\nu\to \\
& \dyle \iint_{D_{\O}}\Big[U(x,y)-|(u(x)-v(x))-(u(y)-v(y))|^{p-2}((u(x)-v(x))-(u(y)-v(y)))\Big]\\ &\times [T_k(u(x)-v(x))-T_k(u(y)-v(y))]d\nu
\end{eqnarray*}
as $n\to \infty$ and the claim follows.

Hence, as a conclusion we obtain that
\begin{eqnarray*}
&\dyle \frac 12\iint_{D_\O}
\,U(x,y)[T_k(u(x)-v(x))-T_k(u(y)-v(y))]d\nu\le\\
&\dyle \io f(x)T_k(u(x)-v(x)) \, dx
\end{eqnarray*}
and the result follows at once. \cqd

It is clear that if $u$ is an entropy solution of \eqref{main00}, then for all $w\in C^\infty_0(\Omega)$, we have
\begin{equation}\label{OOP}
\dyle \frac 12\iint_{D_\O}
\,U(x,y)(w(x)-w(y))d\nu= \io f(x)w(x)\, dx,
\end{equation}
where $U(x,y)=|u(x)-u(y)|^{p-2}(u(x)-u(y))$.

Moreover, we can prove that \eqref{OOP} holds for all $w\in W^{s,p}_{\beta, 0}(\O) \cap L^{\infty}(\O)$ such that $w\equiv 0$ in the set $\{u>k\}$ for some $k>0$. More precisely we have
\begin{Lemma}\label{ness}
Assume that $u$ is an entropy solution to \eqref{main00} with $f\gneq 0$, then for all $w\in W^{s,p}_{\beta, 0}(\O) \cap L^{\infty}(\O)$ such that, for some $k>0$,  $w\equiv 0$ in the set $\{u>k\}$,  we have
$$
\dyle \frac 12\iint_{D_\O}
\,U(x,y)(w(x)-w(y))d\nu= \io f(x)w(x)\, dx.
$$
\end{Lemma}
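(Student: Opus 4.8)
The plan is to read off the identity directly from the entropy inequality \eqref{eq:alcala} by a careful choice of test function. Fix $k_0>0$ with $w\equiv 0$ on $\{u>k_0\}$; since $f\gneq 0$ we have $u\ge 0$. For $m\ge k_0$ and $k>\|w\|_{L^\infty(\O)}$, the function $\varphi=T_m(u)-w$ lies in $W^{s,p}_{\beta, 0}(\O)\cap L^\infty(\O)$, hence is admissible in \eqref{eq:alcala}. The key observation is the pointwise identity
$$
T_k\big(u(x)-\varphi(x)\big)=w(x)+T_k\big(G_m(u(x))\big)\qquad\text{a.e. in }\ren ,
$$
which holds because $u-\varphi=G_m(u)+w$ and the two summands have disjoint supports: $w$ is supported in $\{u\le k_0\}\subseteq\{u\le m\}$, where $G_m(u)=0$ and $|w|\le\|w\|_\infty<k$, while $G_m(u)$ is supported in $\{u>m\}\subseteq\{u>k_0\}$, where $w=0$.

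Inserting this $\varphi$ into \eqref{eq:alcala}, the left-hand side becomes (up to the factor $\frac12$) the sum of $\iint_{D_\O}U(x,y)(w(x)-w(y))\,d\nu$ and $\iint_{D_\O}U(x,y)\big(T_k(G_m(u(x)))-T_k(G_m(u(y)))\big)\,d\nu$, where $U(x,y)=|u(x)-u(y)|^{p-2}(u(x)-u(y))$; since $T_k\circ G_m$ is nondecreasing the second integrand is nonnegative, so that term may be dropped. The right-hand side equals $\io fw\,dx+\io f\,T_k(G_m(u))\,dx\le\io fw\,dx+k\int_{\{u>m\}}f\,dx$, using $f\ge 0$ and $0\le T_k(G_m(u))\le k\chi_{\{u>m\}}$. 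Letting $m\to\infty$ (the condition $w\equiv 0$ on $\{u>m\}$ persists for all $m\ge k_0$) and using $\int_{\{u>m\}}f\,dx\to 0$, which holds since $f\in L^1(\O)$ and $u$ is finite a.e., I obtain $\frac12\iint_{D_\O}U(x,y)(w(x)-w(y))\,d\nu\le\io fw\,dx$. Running the same argument with $-w$ in place of $w$ (again admissible) gives the reverse inequality, hence the desired equality.

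The point requiring care is the legitimacy of the above splitting, i.e. the absolute convergence of $\iint_{D_\O}U(x,y)(w(x)-w(y))\,d\nu$. On $\{u(x)\le k_0,\,u(y)\le k_0\}$ one has $|u(x)-u(y)|=|T_{k_0}(u(x))-T_{k_0}(u(y))|$, so Hölder's inequality with $T_{k_0}(u),w\in W^{s,p}_{\beta, 0}(\O)$ yields a finite bound; on $\{u(x)>k_0,\,u(y)>k_0\}$ the integrand vanishes; on $\{u(x)>k_0,\,u(y)\le k_0\}$ (and its mirror image) $w(x)=0$, so $|w(x)-w(y)|=|w(y)|$, and one splits further: where $u(x)\le k_0+1$ the integrand is $|T_{k_0+1}(u(x))-T_{k_0+1}(u(y))|^{p-1}|w(x)-w(y)|$, again controlled by Hölder, while $\iint_{\{u(x)>k_0+1,\,u(y)\le k_0\}}(u(x)-u(y))^{p-1}\,d\nu\le 2\int_{\{u>k_0\}}f\,dx<\infty$ by testing \eqref{eq:alcala} with $\varphi=T_{k_0}(u)$ (whose left-hand side is a sum of nonnegative terms, exactly as in the Remarks after Definition \ref{def:entropy}); the other dropped term $\iint_{D_\O}U(x,y)(T_k(G_m(u(x)))-T_k(G_m(u(y))))\,d\nu$ is nonnegative and $\le 2k\int_{\{u>m\}}f\,dx$ by the same device. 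Alternatively one may run the whole computation on the approximating solutions $u_n\in W^{s,p}_{\beta, 0}(\O)$ of \eqref{pro:lineal1}, for which $\frac12\iint_{D_\O}U_n(x,y)(w(x)-w(y))\,d\nu=\io f_nw\,dx$ holds with all integrals absolutely convergent, and pass to the limit via Lemma \ref{convv} on $\{u_n(x),u_n(y)\le k_0\}$, the uniform tail bound $\iint_{\{u_n(x)>M,\,u_n(y)\le M-1\}}(u_n(x)-u_n(y))^{p-1}\,d\nu\le 2\int_{\{u>M-1\}}f\,dx$, and the inequality $|w(y)|^p\chi_{\{u_n(x)>k_0\}}\le|w(x)-w(y)|^p$, together with Vitali's lemma, elsewhere. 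The only real obstacle in either route is this integrability bookkeeping in the region where $u$ is large; the algebraic heart of the argument is the pointwise identity above and the sign of the $T_k\circ G_m$ term.
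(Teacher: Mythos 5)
Your argument is correct, and it takes a genuinely different (and shorter) route than the paper. The paper proves the lemma by testing the entropy inequality with $v_h=T_h(u-w)$ and then decomposing $D_\O$ into the regions $A_h,B_h,E_h,F_h$ (with further sub-splittings according to $u\lessgtr k_0, 2k_0$), controlling the bad regions through \eqref{entro001} and dominated convergence before letting $h\to\infty$. You instead test with $\varphi=T_m(u)-w$ and exploit the exact splitting $T_k(u-\varphi)=w+T_k(G_m(u))$, valid because $w$ and $G_m(u)$ have disjoint supports once $u\ge 0$ and $k>\|w\|_\infty$; the extra term then has a sign (it is the same nonnegative quantity as in the Remarks after Definition \ref{def:entropy}) and its contribution to the right-hand side is at most $k\int_{\{u>m\}}f\,dx\to 0$, so a single limit $m\to\infty$ plus the symmetrization $w\mapsto -w$ finishes the proof. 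The only genuine work left in your route is the absolute integrability of $U(x,y)(w(x)-w(y))$ on $D_\O$, and you handle it correctly: H\"older with $T_{k_0+1}(u),w\in W^{s,p}_{\beta,0}(\O)$ where both values of $u$ are small, and the tail bound obtained by taking $\varphi=T_{k_0}(u)$ in \eqref{eq:alcala} where $u(x)$ is large and $u(y)\le k_0$ --- this is in fact a cleaner justification of the finiteness of $\iint_{\{u(x)>2k_0,u(y)\le k_0\}}|U|\,d\nu$ than the paper gives, since it uses only the entropy formulation and therefore applies to an arbitrary entropy solution, which is exactly what the uniqueness proof needs. Two small caveats: your parenthetical ``since $f\gneq 0$ we have $u\ge 0$'' is asserted rather than proved, but the paper's own proof tacitly uses the same positivity (e.g.\ in claiming $\{u\le w-h\}=\emptyset$ for $h\gg\|w\|_\infty$), so this is a shared standing assumption of the framework rather than a new gap --- still, for an arbitrary entropy solution it deserves a line, since without $u\ge 0$ your disjoint-support identity would require $w\equiv 0$ on $\{|u|>k_0\}$; and your ``alternative'' passage to the limit along the approximations \eqref{pro:lineal1} via Lemma \ref{convv} would only cover the constructed solution, not an arbitrary entropy solution, so it is the first route that should be regarded as the proof.
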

\begin{proof}
Let $w\in W^{s,p}_{\beta, 0}(\O) \cap L^{\infty}(\O)$ be such that $w\equiv 0$ in the set $\{u>k_0\}$ for some $k_0>0$ and define
$v_h=T_h(u-w)$ with $h>>k_0+||w||_\infty+1$.

Since $u$ is an entropy solution to \eqref{main00}, then for $k$ fixed such that $k>>\max\{k_0, ||w||_\infty\}$, we have
\begin{eqnarray*}
&\dyle \frac 12\iint_{D_\O}
\,U(x,y)[T_k(u(x)-v_h(x))-T_k(u(y)-v_h(y))]d\nu\le\\
&\dyle \io f(x)T_k(u(x)-v_h(x)) \, dx.
\end{eqnarray*}
It is clear that
$$
\io f(x)T_k(u(x)-v_h(x)) \, dx\to \io f(x)w\, dx\mbox{  as  }h\to \infty.
$$
Notice that, for $h>>||w||_\infty$, we have $\bigg\{u\le w-h\bigg\}=\emptyset$, thus for $h$ as above there results that
$$
\bigg\{|u(x)-w(x)|\ge h\bigg\}\equiv \bigg\{(u(x)-w(x))\ge h\bigg\}.$$
Define
$$
A_h\equiv\bigg\{(x,y)\in D_\O: |u(x)-w(x)|<h,\: |u(y)-w(y)|<h\bigg\},$$
\begin{eqnarray*}
B_h &\equiv & \bigg\{(x,y)\in D_\O: |u(x)-w(x)|\ge h,\:\: |u(y)-w(y)|\ge h\bigg\}\\
&=& \bigg\{(x,y)\in D_\O: (u(x)-w(x))\ge h,\:\: (u(y)-w(y))\ge h\bigg\}
\end{eqnarray*}
and
$$
E_h\equiv\bigg\{(x,y)\in D_\O: (u(x)-w(x))\ge h,\: |u(y)-w(y)|\le h\bigg\},
$$
$$
F_h\equiv\bigg\{(x,y)\in D_\O: |u(x)-w(x)|< h,\: (u(y)-w(y))> h\bigg\}.
$$
Then
\begin{equation*}
\begin{split}
&\dyle \iint_{D_\O}
\,U(x,y)[T_k(u(x)-v_h(x))-T_k(u(y)-v_h(y))]d\nu=\iint_{A_h}+ \iint_{B_h}+\iint_{E_h}+\iint_{F_h}\\
&= I_{A_h}+I_{B_h}+I_{E_h}+I_{F_h}.
\end{split}\end{equation*}
It is clear that
\begin{equation*}
\begin{split}
& I_{A_h}=\dyle \iint_{A}
\,U(x,y)[T_k(w(x))-T_k(w(y)]d\nu=\iint_{A}
\,U(x,y)[w(x)-w(y)]d\nu\\
&= \dyle \iint_{A\cap \{u(x)<k_0, u(y)<k_0\}}
\,U(x,y)[w(x))-w(y)]d\nu+\iint_{A\cap\{u(x)>k_0, u(y)>k_0\}}
\,U(x,y)[w(x)-w(y)]d\nu\\ &+\dyle \iint_{A\cap\{u(x)>k_0, u(y)\le k_0\}}\,U(x,y)[w(x)-w(y)]d\nu
+\iint_{A\cap\{u(x)\le k_0, u(y)>k_0\}}\,U(x,y)[w(x)-w(y)]d\nu\\ \\
&= I_{1}(h)+I_{2}(h)+I_{3}(h)+I_{4}(h)
\end{split}
\end{equation*}
Since $T_k(u)\in W^{s,p}_{\beta, 0}(\O)$, we have
$$
I_{1}(h)\to \iint_{\{u(x)<k_0, u(y)<k_0\}}
\,U(x,y)[w(x))-w(y)]d\nu\mbox{  as  }h\to \infty.
$$
Using the properties of $w$, we have $I_{2}(h)=0$. Let us consider now $I_{3}(h)$, we have
\begin{equation*}
\begin{split}
I_{3}(h) &=\dyle \iint_{A\cap\{k_0<u(x)<2k_0, u(y)\le k_0\}}\,U(x,y)[w(x)-w(y)]d\nu\\
& +\dyle \iint_{A\cap\{u(x)>2k_0, u(y)\le k_0\}}\,U(x,y)[w(x)-w(y)]d\nu\\
&=J_1(h)+J_2(h).
\end{split}
\end{equation*}
As above, since $T_k(u)\in W^{s,p}_{\beta, 0}(\O)$, then
$$
J_{1}(h)\to \iint_{\{k_0<u(x)<2k_0, u(y)<k_0\}}
\,U(x,y)[w(x))-w(y)]d\nu\mbox{  as  }h\to \infty.
$$
For $J_2(h)$, we have
$$
\Big|U(x,y)[w(x))-w(y)]\Big|\le ||w||_\infty\Big|U(x,y)\Big|=||w||_\infty\Big|u(x)-u(y)\Big|^{p-1}.
$$
Using the fact that $$
\iint_{\{u(x)>2k_0, u(y)\le k_0\}}\,\Big|U(x,y)\Big|d\nu<\infty,
$$
then by the dominated convergence theorem we conclude that
$$
J_{2}(h)\to \iint_{\{u(x)\ge 2k_0, u(y)<k_0\}}
\,U(x,y)[w(x))-w(y)]d\nu\mbox{  as  }h\to \infty.
$$
In the same way we can treat $I_{4}(h)$. Hence
$$
I_{A_h}\to \iint_{D_\O}
\,U(x,y)[w(x)-w(y)]d\nu\mbox{  as  }h\to \infty.
$$
We deal now with $I_{B_h}$. It is clear that if $(x,y)\in B_h$, then $v_h(x)=v_h(y)=h$, hence
$$
I_{B_h}=\iint_{B_h}
\,U(x,y)[T_k(u(x)-h)-T_k(u(y)-h)]d\nu\ge 0.
$$
Now, for $(x,y)\in E_h$, we have $u(x)\ge h-||w||_\infty>k_0$, thus $w(x)=0$. Hence
$$
E_h\equiv E_h\cap \{u(y)<h-||w||_\infty-1\}\cup E_h\cap \{ h\ge u(x), u(y)\ge h-||w||_\infty-1\}\equiv E_1(h)\cup E_2(h).
$$
It is clear that for $(x,y)\in E_2(h)$, we have $w(x)=w(y)=0$, then
$$
\,U(x,y)[T_k(u(x)-v(x))-T_k(u(y)-v_h(y))]=\,U(x,y)[T_k(G_h(u(x)))-T_k(G_h(u(y)))]\ge 0.
$$
Thus
$$
\iint_{E_2(h)}\,U(x,y)[T_k(u(x)-v(x))-T_k(u(y)-v_h(y))]d\nu\ge 0.
$$
Therefore, we conclude that
\begin{equation*}
I_{E_h}\ge \dyle \iint_{E_1(h)}
\,U(x,y)[T_k(G_h(u(x)))-T_k(w(y))]d\nu\ge -2k\iint_{E_1(h)}
\,\Big|U(x,y)\Big|d\nu.
\end{equation*}
Let $h_1=h-||w||_\infty-1$, then
$$
\iint_{E_1(h)}
\,\Big|U(x,y)\Big|d\nu\le \iint_{u(x)>h_1, u(y)<h_1-1}
\,\Big|U(x,y)\Big|d\nu\to 0\mbox{  as   }h\to \infty.
$$
Thus $I_{E_h}\ge o(h)$.

In the same way we can prove that $I_{F_h}\ge o(h).$
Therefore we reach that
$$
\liminf_{h\to \infty}\dyle \frac 12\iint_{D_\O}
\,U(x,y)[T_k(u(x)-v(x))-T_k(u(y)-v_h(y))]d\nu\ge
\frac 12\iint_{D_\O}
\,U(x,y)(w(x)-w(y))d\nu
$$
As a conclusion we have proved that
$$
\frac 12\iint_{D_\O}
\,U(x,y)(w(x)-w(y))d\nu\le \io f(x)w(x)dx.
$$
Substituting $w$ by $-w$ in the above inequality, we obtain that
$$
\frac 12\iint_{D_\O}
\,U(x,y)(w(x)-w(y))d\nu=\io f(x)w(x)dx
$$
which is the desired result.
\end{proof}
Now, we are in position to prove the uniqueness result in Theorem \ref{entropi}.

{\bf{Proof of Theorem \ref{entropi}: Uniqueness part:}}

Let $u$ be the entropy positive solution defined in Theorem \ref{mainth}, recall that $u=\limsup u_n$ where $u_n$ is the unique solution to the approximated problem \eqref{pro:lineal1}.

Assume that $v$ is an other entropy positive solution to problem \eqref{main00}. We claim that $u_n\le v$ for all $n$.
To prove the claim we fix $n$ and define $w_n=(u_n-v)_+$, then $w_n=(u_n-T_k(v))_+$ where $k>>||u_n||_\infty$. Hence $w_n\in W^{s,p}_{\beta, 0}(\O) \cap L^{\infty}(\O)$ and $w_n\equiv 0$ in the set $\{v>||u_n||_\infty\}$. Therefore using $w_n$ as a test function in \eqref{pro:lineal1} and taking into consideration the result of Lemma \ref{ness}, we reach that
\begin{eqnarray*}
\dyle \frac 12\iint_{D_\O}
\,U_n(x,y)(w_n(x)-w_n(y))d\nu &= &\io f_n(x)w_n(x)\, dx\\
&\le &\dyle \io f(x)w_n(x)\, dx=\dyle \frac 12\iint_{D_\O}
\,V(x,y)(w_n(x)-w_n(y))d\nu,
\end{eqnarray*}
where
$$
U_n(x,y)=|u_n(x)-u_n(y)|^{p-2}(u_n(x)-u_n(y))\mbox{   and   }V(x,y)=|v(x)-v(y)|^{p-2}(v(x)-v(y)).
$$
Thus
$$
\dyle \frac 12\iint_{D_\O}
\,\Big(U_n(x,y)-V(x,y)\Big)(w_n(x)-w_n(y))d\nu
\le 0.
$$
Using the fact that
$$
\Big(U_n(x,y)-V(x,y)\Big)(w_n(x)-w_n(y))\ge C|w_n(x)-w_n(y)|^p,
$$
it follows that $w_n\equiv 0$, hence $u_n\le v$ for all $n$ and the claim follows.
As a consequence we reach that $u\le v$.

\

\

Let us prove now that $v\le u$. To this aim, we will follow closely the argument used in \cite{BZ}.

Since $u, v$ are entropy solutions to \eqref{main00}, then for $h>>k$, we have
\begin{eqnarray*}
&\dyle \frac 12\iint_{D_\O}
\,U(x,y)[T_k(u(x)-T_h(v(x)))-T_k(u(y)-T_h(v(y)))]d\nu\le\dyle \io f(x)T_k(u(x)-T_h(v(x))) \, dx,
\end{eqnarray*}
and
\begin{eqnarray*}
&\dyle \frac 12\iint_{D_\O}
\,V(x,y)[T_k(v(x)-T_h(u(x)))-T_k(v(y)-T_h(u(y)))]d\nu\le\dyle \io f(x)T_k(v(x)-T_h(u(x))) \, dx.
\end{eqnarray*}
It is clear that
$$
\io f(x)T_k(u(x)-T_h(v(x))) \, dx+\io f(x)T_k(v(x)-T_h(u(x))) \, dx\to 0\mbox{  as  }h\to \infty.
$$
Thus
\begin{equation}\label{santi}
\begin{array}{lll}
I(h)&\equiv & \dyle \frac 12\iint_{D_\O}
\,U(x,y)[T_k(u(x)-T_h(v(x)))-T_k(u(y)-T_h(v(y)))]d\nu\\ &+ & \dyle \frac 12\iint_{D_\O}
\,V(x,y)[T_k(v(x)-T_h(u(x)))-T_k(v(y)-T_h(u(y)))]d\nu\\
&=& P(h)+Q(h)\le o(h).
\end{array}
\end{equation}
Let
$$
D^1_{\O}(h)\equiv \{(x,y)\in D_\O\mbox{  such that  } u(x)<h\mbox{  and  }u(y)<h\}
$$
and
$$
D^2_{\O}(h)\equiv \{(x,y)\in D_\O\mbox{  such that  } v(x)<h\mbox{  and  }v(y)<h\}.
$$
Then
\begin{equation*}
\begin{array}{lll}
P(h) & = & \dyle \iint_{D^1_\O(h)}
\,U(x,y)[T_k(u(x)-T_h(v(x)))-T_k(u(y)-T_h(v(y)))]d\nu \\
&+ & \dyle \iint_{D_\O\backslash D^1_\O(h)}
\,U(x,y)[T_k(u(x)-T_h(v(x)))-T_k(u(y)-T_h(v(y)))]d\nu\\ \\
&=& P_1(h)+P_2(h),
\end{array}
\end{equation*}
and
\begin{equation*}
\begin{array}{lll}
Q(h) & =& \dyle\iint_{D^2_\O(h)}
\,V(x,y)[T_k(v(x)-T_h(u(x)))-T_k(v(y)-T_h(u(y)))]d\nu\\
&+ & \dyle \iint_{D_\O\backslash D^2_\O(h)}
\,V(x,y)[T_k(v(x)-T_h(u(x)))-T_k(v(y)-T_h(u(y)))]d\nu\\ \\
&=& Q_1(h)+Q_2(h).
\end{array}
\end{equation*}

\

We claim that $P_2(h)\ge o(h)$ and $Q_2(h)\ge o(h)$.

Let us begin by proving that $P_2(h)\ge o(h)$. It is clear that
$$
D_\O\backslash D^1_\O(h)=\{(x,y)\in D_\O\mbox{  with  }u(x)\ge h\}\cup \{(x,y)\in D_\O\mbox{  with  }u(y)\ge h\}.
$$
If $u(x)\ge h$ and $u(y)\ge h$, then $v(x)\ge h$ and $v(y)\ge h$. Thus
$$
U(x,y)[T_k(u(x)-T_h(v(x)))-T_k(u(y)-T_h(v(y)))]=U(x,y)[T_k(u(x)-h)-T_k(u(y)-h)]\ge 0.
$$
On the other hand, by \eqref{entro001}, we get
$$
\iint_{\{u(x)>h, u(y)<h-1\}}|U(x,y)|d\nu=o(h).
$$
Hence
\begin{equation*}
\begin{array}{lll}
&\dyle \iint_{\{u(x)\ge h\}}
\,U(x,y)[T_k(u(x)-T_h(v(x)))-T_k(u(y)-T_h(v(y)))]d\nu\\ \\
&\ge \dyle \iint_{\{u(x)\ge h, h-1\le u(y)\le h\}}
\,U(x,y)[T_k(u(x)-h)-T_k(u(y)-T_h(v(y)))]d\nu+o(h).
\end{array}
\end{equation*}
Notice that for $(x,y)\in \{u(x)\ge h, h-1\le u(y)\le h\}$, we have
$$
U(x,y)[T_k(u(x)-h)-T_k(u(y)-T_h(v(y)))]=U(x,y)[T_k(u(x)-h)+T_k(T_h(v(y))-u(y))]\ge 0.
$$
Thus
$$
\dyle \iint_{\{u(x)\ge h\}}
\,U(x,y)[T_k(u(x)-T_h(v(x)))-T_k(u(y)-T_h(v(y)))]d\nu\ge o(h).
$$
In the same way we can prove that
$$
\dyle \iint_{\{u(y)\ge h\}}
\,U(x,y)[T_k(u(x)-T_h(v(x)))-T_k(u(y)-T_h(v(y)))]d\nu\ge o(h).
$$
Thus $P_2(h)\ge o(h)$ as affirmed.

\

We deal now with $Q_2(h)$. As above, we have
$$
D_\O\backslash D^2_\O(h)=\bigg\{(x,y)\in D_\O \mid: v(x)\ge h\}\cup \{(x,y)\in D_\O \mid v(y)\ge h\bigg\}\equiv M_1(h)\cup M_2(h)\cup M_3(h),
$$
where
$$
M_1(h)=\bigg\{(x,y)\in D_\O\mid: v(x)\ge h\mbox{  and   }v(y)\ge h\bigg\},
M_2(h)=\bigg\{(x,y)\in D_\O\mid: v(x)\ge h\mbox{  and   }v(y)< h\bigg\},
$$
and
$$
M_3(h)=\bigg\{(x,y)\in D_\O\mid: v(x)<h\mbox{  and   }v(y)\ge h\bigg\}.
$$
Let
$$
Z_1(h)=\bigg\{(x,y)\in D_\O\mid: v(x)-T_h(u(x))\ge k\bigg\} \mbox{  and } Z_2(h)=\bigg\{(x,y)\in D_\O\mid: v(y)-T_h(u(y))\ge k\bigg\}.
$$
If $(x,y)\in Z_1(h)\cap Z_2(h)$, we have
$$
\,V(x,y)[T_k(v(x)-T_h(u(x)))-T_k(v(y)-T_h(u(y)))]=0.
$$
Hence we can assume that $(x,y)\in \Big(Z_1(h)\backslash Z_2(h)\Big)\cup \Big(Z_2(h)\backslash Z_1(h)\Big)\equiv Y_1(h)\cup Y_2(h)$.

Therefore we conclude that
\begin{equation*}
\begin{array}{lll}
Q_2(h) & =& \dyle\iint_{M_1(h)\cap Y_1(h)}+\iint_{M_2(h)\cap Y_1(h)}+\iint_{M_3(h)\cap Y_1(h)}\\
&+ & \dyle \iint_{M_1(h)\cap Y_2(h)}+\iint_{M_2(h)\cap Y_2(h)}+\iint_{M_3(h)\cap Y_2(h)}\\ \\
&=& J_1(h)+J_2(h)+J_3(h)+T_1(h)+T_2(h)+T_3(h).
\end{array}
\end{equation*}
Let us begin by proving that $J_1(h)\ge o(h)$. Notice that
$$
M_1(h)\cap Y_1(h)=\bigg\{
(x,y)\in D_\O\mid: v(x)\ge h, v(y)\ge h\mbox{  and  } v(x)-T_h(u(x))\ge k, v(y)-T_h(u(y))<k\bigg\},
$$
then for $(x,y)\in M_1(h)\cap Y_1(h)$, we have
$$
\,V(x,y)[T_k(v(x)-T_h(u(x)))-T_k(v(y)-T_h(u(y)))]=\,V(x,y)[k-(v(y)-T_h(u(y)))]
$$
If $v(x)\ge v(y)$, then $\,V(x,y)[k-(v(y)-T_h(u(y)))]\ge 0$, so we have just to consider the case where $(x,y)\in M_1(h)\cap Y_1(h)$ with $v(x)<v(y)$. Thus
$$
\Big|\,V(x,y)[T_k(v(x)-T_h(u(x)))-T_k(v(y)-T_h(u(y)))]\Big|=\,(v(y)-v(x))^{p-1}[k-(v(y)-T_h(u(y)))].
$$
Now taking into consideration that $(x,y)\in M_1(h)\cap Y_1(h)$, we get
$$
0\le (v(y)-v(x))\le T_h(u(y))+k-(T_h(u(x))+k)\le T_h(u(y))-T_h(u(x))\le u(y)-u(x).
$$
Therefore we conclude that
$$
\Big|\,V(x,y)[T_k(v(x)-T_h(u(x)))-T_k(v(y)-T_h(u(y)))]\Big|\le 2k\,(u(y)-u(x))^{p-1}.
$$
If $u(x)\ge h$, then $u(y)\ge h$, hence
$$
\,V(x,y)[T_k(v(x)-T_h(u(x)))-T_k(v(y)-T_h(u(y)))]=\,V(x,y)[T_k(v(x))-T_k(v(y))]\ge 0,
$$
It then remains to consider the case $u(x)<h$.
\begin{enumerate}

\item[(i)] If $u(y)>(h+1)$, by \eqref{entro001}, we reach that
$$
\iint_{\{u(y)>h+1, u(x)<h\}}|U(x,y)|d\nu=o(h).
$$
\item [(ii)]If $h<u(y)\le (h+1)$, then $0\le k-(v(y)-T_h(u(y)))\le u(y)-u(x)$, thus
$$
\Big|\,V(x,y)[T_k(v(x)-T_h(u(x)))-T_k(v(y)-T_h(u(y)))]\Big|\le\,(u(y)-u(x))^{p}.
$$
Now, by \eqref{two21}, we get
$$
\iint_{\{h<u(y)\le h+1, u(x)<h\}}(u(y)-u(x))^p d\nu=o(h).
$$
\item[(iii)] We deal now with the set $u(y)\le h$.  Since $(x,y)\in M_1(h)\cap Y_1(h)$, we have
$u(y)\ge (h-k)$, so if $u(x)<(h-k-1)$, using again \eqref{two2} we reach that
$$
\iint_{\{u(y)>(h-k), u(x)<(h-k-1)\}}|U(x,y)|d\nu=o(h).
$$
Let us assume that $(h-k-1)<u(x)\le u(y)<h$. In this case we have
$$
[k-(v(y)-T_h(u(y)))]=u(y)-(v(y)-k)\le u(y)-(v(x)-k)\le u(y)-u(x).
$$
So for $(x,y)\in M_1(h)\cap Y_1(h)$ with $h-k-1<u(x)\le u(y)<h$, we get
$$
\Big|\,V(x,y)[T_k(v(x)-T_h(u(x)))-T_k(v(y)-T_h(u(y)))]\Big|=\,(v(y)-v(x))^{p-1}[k-(v(y)-T_h(u(y)))]\le (u(y)-u(x))^{p}.
$$
Now using \eqref{two21}, it follows that
$$
\iint_{\{h-k-1<u(x)\le u(y)<h\}}(u(y)-u(x))^p d\nu=o(h).
$$
\end{enumerate}

Therefore combining the above estimates we obtain $J_1(h)\ge o(h)$.

\

For $J_2(h)$, we have $v(y)\le h<v(x)$ and
$$
T_k(v(x)-T_h(u(x)))-T_k(v(y)-T_h(u(y)))\ge k-(v(y)-T_h(u(u)))\ge 0.
$$
Thus
$$
\,V(x,y)[T_k(v(x)-T_h(u(x)))-T_k(v(y)-T_h(u(y)))]\ge 0\mbox{  for all }(x,y)\in M_2(h)\cap Y_1(h).
$$
Hence $J_2(h)\ge 0$.

We deal now with $J_3(h)$. We have $v(x)\le h<v(y)$ and for all $(x,y)\in M_3(h)\cap Y_1(h)$,
$$
\,\Big|V(x,y)[T_k(v(x)-T_h(u(x)))-T_k(v(y)-T_h(u(y)))]\Big|=\,(v(y)-v(x))^{p-1}[k-(v(y)-T_h(u(y)))].
$$
If $v(x)\le (h-1)$, then by \eqref{two2} we have
\begin{eqnarray*}
& \dyle \iint_{\{v(y)>h, v(x)<h-1\}}\,\Big|V(x,y)[T_k(v(x)-T_h(u(x)))-T_k(v(y)-T_h(u(y)))]\Big|d\nu\le \\ \\
& \dyle2k\iint_{\{v(y)>h, v(x)<h-1\}}|V(x,y)|d\nu=o(h).
\end{eqnarray*}
So, assume that $(h-1)<v(x)\le h$. Since $(x,y)\in M_3(h)\cap Y_1(h)$, then $v(y)\le k+T_h(u(y))\le k+u(y)$. Thus $u(y)>(h-k)$.
It is clear that
$$
0\le v(y)-v(x)\le T_h(u(y))-T_h(u(x))\le u(y)-u(x).
$$
Hence following the same discussion as in case $(iii)$ in the analysis of $J_1(h)$, and combining the above estimates we reach that $J_3(h)\ge 0$.

\

Notice that in a symmetric way we can prove that $T_1(h)+T_2(h)+T_3(h)\ge o(h)$.
Thus $Q_2(h)\ge o(h)$ and the claim follows.

\

Therefore, going back to the definition of $I(h)$ given in \eqref{santi} and taking into consideration that $u<h$ in the set $\{v<h\}$, it follows that
\begin{eqnarray*}
I(h)&\ge & \dyle \frac 12\iint_{D^1_{\O}(h)}
\,U(x,y)[T_k(u(x)-T_h(v(x)))-T_k(u(y)-T_h(v(y)))]d\nu\\ &+ & \dyle \frac 12\iint_{D^2_{\O}(h)}
\,V(x,y)[T_k(v(x)-u(x))-T_k(v(y)-u(y))]d\nu+o(h)\\
&\ge & \frac 12\iint_{D^2_{\O}(h)}
\,\Big(V(x,y)-U(x,y)\Big)[T_k(v(x)-u(x))-T_k(v(y)-u(y))]d\nu\\
&+& \dyle \frac 12\iint_{D^1_{\O}(h)\backslash D^2_{\O}(h)}
\,U(x,y)[T_k(u(x)-T_h(v(x)))-T_k(u(y)-T_h(v(y)))]d\nu +o(h)\\
&\ge & I_1(h)+I_2(h)+o(h).
\end{eqnarray*}
It is clear that
$$
I_1(h)\ge C\iint_{D^2_{\O}(h)}
\,|T_k(v(x)-u(x))-T_k(v(y)-u(y))|^pd\nu.
$$
We claim that $I_2(h)\ge o(h)$.

Notice that
$$
D^1_{\O}(h)\backslash D^2_{\O}(h)=N_1(h)\cup N_2(h)\cup N_3(h)
$$
where
$$
N_1(h)\equiv \bigg\{(x,y)\in D_\O \mbox{  such that  }u(x)\le h, u(y)\le h, v(x)>h, v(y)>h\bigg\},$$
$$
N_2(h)\equiv \bigg\{(x,y)\in D_\O \mbox{  such that  }u(x)\le h, u(y)\le h, v(x)>h, v(y)\le h\bigg\},$$
and $$
N_3(h)\equiv \bigg\{(x,y)\in D_\O \mbox{  such that  }u(x)\le h, u(y)\le h, v(x)\le h, v(y)>h\bigg\}.
$$
It is clear that
$$
\frac 12\iint_{N_1(h)}
\,U(x,y)[T_k(u(x)-T_h(v(x)))-T_k(u(y)-T_h(v(y)))]d\nu\ge 0.
$$
Therefore, we conclude that
$$
I_2(h)\ge \frac 12\iint_{N_2(h)} + \frac 12\iint_{N_3(h)}=I_{21}(h)+I_{22}(h).
$$

\

\

For $(x,y)\in N_2(h)$, we will consider three main cases:
\begin{itemize}

\item[$I)$] If $h-u(x)\le v(y)-u(y)$, then $0\le h-v(y)\le u(x)-u(y)$. Hence
$$
\,U(x,y)[T_k(u(x)-T_h(v(x)))-T_k(u(y)-T_h(v(y)))]=\,U(x,y)[T_k(v(y)-u(y))-T_k(h-u(x)]\ge 0.
$$
\

\item[$II)$] If $u(x)-u(y)\le 0\le h-v(y)$, then $u(x)-u(y)\le 0$ and $h-u(x)\ge v(y)-u(y)$. Thus
$$
\,U(x,y)[T_k(u(x)-T_h(v(x)))-T_k(u(y)-T_h(v(y)))]=\,U(x,y)[T_k(v(y)-u(y))-T_k(h-u(x)]\ge 0.
$$
\

\item[$III)$] Consider now the case where $0\le u(x)-u(y)\le h-v(y)$. It is clear that $0\le u(x)-u(y)\le v(x)-v(y)$.
Hence
\begin{eqnarray*}
&\Big|\,U(x,y)[T_k(u(x)-T_h(v(x)))-T_k(u(y)-T_h(v(y)))]\Big|\\
&=(u(x)-u(y))^{p-1}[T_k(h-u(x))-T_k(v(y)-u(y))]\le 2k\,(v(x)-v(y))^{p-1}.
\end{eqnarray*}
If $v(y)\le h-1$ or $v(x)\ge h+1$, by \eqref{entro001}, we get
$$
\iint_{N_2(h)\cap\Big\{\{v(x)>h, v(y)<h-1\}\cup \{v(x)>h+1, v(y)<h\}\Big\}}|V(x,y)|d\nu=o(h).
$$
Thus, we deal with the set $\{ h-1<v(y)\le h \mbox{  and } v(x)\le h+1\}$.

It is clear that if $v(y)-u(y)\ge k$, then $h-u(x)\ge k$. Thus
$$
\Big|\,U(x,y)[T_k(u(x)-T_h(v(x)))-T_k(u(y)-T_h(v(y)))]\Big|=0.
$$
Assume that $h-u(x)\le k$, then $v(y)-u(y)\le k$, hence
\begin{eqnarray*}
&\Big|\,U(x,y)[T_k(u(x)-T_h(v(x)))-T_k(u(y)-T_h(v(y)))]\Big|\le \\
& (v(x)-v(y))^{p-1}[(h-u(x))-(v(y)-u(y))]\le\,(v(x)-v(y))^{p}.
\end{eqnarray*}
Therefore, using \eqref{two21},
\begin{eqnarray*}
&\dyle \iint\limits_{N_2(h)\cap\{h-1<v(y)\le h\le v(x)\le h+1\} \cap \{h-k\le u(x)\}}\Big|\,U(x,y)[T_k(u(x)-T_h(v(x)))-T_k(u(y)-T_h(v(y)))]\Big|d\nu\le \\ \\
& \dyle\iint\limits_{N_2(h)\cap\{h-1<v(y)\le h\le v(x)\le h+1\}}\,(v(x)-v(y))^{p}d\nu=o(h).
\end{eqnarray*}
We consider now the set where $v(y)-u(y)<k<h-u(x)$, then $u(x)<h-k$ and thus $u(y)<h-k$. As above we have
\begin{eqnarray*}
&\Big|\,U(x,y)[T_k(u(x)-T_h(v(x)))-T_k(u(y)-T_h(v(y)))]\Big|\le \\
& (v(x)-v(y))^{p-1}[(k-(v(y)-u(y))]\le (v(x)-v(y))^{p}
\end{eqnarray*}
Thus using again \eqref{two21},
\begin{eqnarray*}
&\dyle \iint\limits_{N_2(h)\cap\{h-1<v(y)\le h\le v(x)\le h+1\} \cap \{u(x)<h-k\}}\Big|\,U(x,y)[T_k(u(x)-T_h(v(x)))-T_k(u(y)-T_h(v(y)))]\Big|d\nu\le \\ \\
& \dyle\iint\limits_{N_2(h)\cap\{h-1<v(y)\le h\le v(x)\le h+1\}}\,(v(x)-v(y))^{p}d\nu=o(h).
\end{eqnarray*}
\end{itemize}

Therefore we conclude that $I_{21}(h)\ge o(h)$. In the same way and using a symmetric argument, we can prove that $I_{22}(h)\ge o(h)$.

Hence $I_2(h)\ge o(h)$ and the claim follows.

In conclusion, we have proved that
$$
C\iint_{D^2_{\O}(h)}
\,|T_k(v(x)-u(x))-T_k(v(y)-u(y))|^pd\nu\le o(h).
$$
Letting $h\to \infty$, there results that
$$
\iint_{D_\O}
\,|T_k(v(x)-u(x))-T_k(v(y)-u(y))|^pd\nu=0.
$$
Thus $T_k(u)=T_k(v)$ for all $k$ and then $u=v$.
\cqd

\section{Problem with reaction term and general datum}\label{sec3}

In this section we consider the problem

\begin{equation}\label{eq:exis2}
\left\{
\begin{array}{rcll}
(-\D)^s_{p,\beta} u &= & \l u^{q}+g(x) &\mbox{ in }\O,\\
u &\ge & 0 &\mbox{  in   } \O,\\
u &= & 0 &\mbox {  in  }\ren\backslash \O,
\end{array}
\right.
\end{equation}
where $\l, q>0$ and $g\gneq 0$. According to the values of $q$ and $\l$, we will prove that  problem \eqref{eq:exis2} has an entropy solution in the sense of Definition \ref{def:entropy}.

Let begin by the case $q<p-1$. We have the next existence result.
\begin{Theorem}\label{exisqq}
Assume that $q<p-1$, then for all $g\in L^1(\O)$ and for all $\l>0$, problem \eqref{eq:exis2} has a positive entropy solution.
\end{Theorem}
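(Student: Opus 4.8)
The plan is to solve \eqref{eq:exis2} by a monotone iteration that reduces everything to the $f(x,\sigma)=f(x)$ theory of Section \ref{sec2}. Set $u_0\equiv 0$ and, having constructed $u_n\ge 0$, let $u_{n+1}$ be the unique positive entropy solution provided by Theorem \ref{entropi} of
$$(-\Delta)^s_{p,\beta}u_{n+1}=\lambda u_n^{q}+g\quad\text{in }\O,\qquad u_{n+1}=0\quad\text{in }\ren\setminus\O.$$
This is well posed at each step precisely because $q<p-1$: by Lemma \ref{one} the entropy solution $u_n$ belongs to $\mathcal{M}^{p_1}(\O,d\mu)$ with $p_1=\frac{N(p-1)}{N-ps}>p-1>q$, so $u_n^{q}\in L^1(\O)$ and, since $g\gneq 0$, the datum $\lambda u_n^{q}+g$ is a nonnegative $L^1$ function. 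Monotonicity of the scheme follows by induction from the weak comparison principle: $u_1\gneq 0=u_0$ because its datum $g$ is nonnegative, and $u_n\ge u_{n-1}$ forces $\lambda u_n^{q}+g\ge\lambda u_{n-1}^{q}+g$, hence $u_{n+1}\ge u_n$.

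The heart of the matter is a uniform a priori bound, and here the sublinearity $q<p-1$ is used a second, decisive time. Arguing as in the proof of Lemma \ref{one} for the problem defining $u_{n+1}$ (i.e.\ testing the approximating equations with $T_k$ and passing to the limit) one gets, with constants independent of $n$,
$$\frac1k\iint_{D_\O}|T_k(u_{n+1}(x))-T_k(u_{n+1}(y))|^{p}\,d\nu\le C\Big(\lambda\|u_n\|_{L^{q}(\O)}^{q}+\|g\|_{L^1(\O)}\Big),$$
and the Marcinkiewicz estimate of Lemma \ref{one}, together with the identity $p^*_s/(p\,p_1)=\frac1{p-1}$, upgrades this to
$$\|u_{n+1}\|_{\mathcal{M}^{p_1}(\O,d\mu)}\le C\Big(\lambda\|u_n\|_{L^{q}(\O)}^{q}+\|g\|_{L^1(\O)}\Big)^{\frac1{p-1}}.$$
Since $\O$ is bounded and $q<p_1$, one has $\|u_n\|_{L^{q}(\O)}\le C\|u_n\|_{\mathcal{M}^{p_1}(\O,d\mu)}$, so writing $a_n=\|u_n\|_{\mathcal{M}^{p_1}(\O,d\mu)}$ we obtain
$$a_{n+1}\le C\big(a_n^{\frac{q}{p-1}}+\|g\|_{L^1(\O)}^{\frac1{p-1}}\big).$$
Because $\frac{q}{p-1}<1$, the map $t\mapsto C(t^{q/(p-1)}+\|g\|_{L^1}^{1/(p-1)})$ is bounded above by $t$ once $t\ge M$ for a suitable $M=M(N,s,p,\beta,\lambda,\|g\|_{L^1})$; as $a_0=0\le M$, an induction gives $a_n\le M$ for all $n$. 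Feeding this bound into the proof of Lemma \ref{two} yields a uniform bound for $\{u_n\}$ in the weighted fractional Sobolev space of Lemma \ref{two}, and in particular $\{T_k(u_n)\}_n$ is bounded in $W^{s,p}_{\beta,0}(\O)$ for every $k>0$.

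The passage to the limit is then the same as in Theorem \ref{entropi}. The sequence $\{u_n\}$ is increasing and bounded in $\mathcal{M}^{p_1}(\O,d\mu)$, so $u_n\uparrow u$ a.e.\ with $u^{q}\in L^1(\O)$ and, by monotone convergence, $\lambda u_n^{q}+g\to\lambda u^{q}+g$ strongly in $L^1(\O)$. Since $u_n\ge 0$, $(-\Delta)^s_{p,\beta}u_n\ge 0$ and $\{T_k(u_n)\}_n$ is bounded in $W^{s,p}_{\beta,0}(\O)$, Lemma \ref{compact} gives $T_k(u_n)\to T_k(u)$ strongly in $W^{s,p}_{\beta,0}(\O)$ for each $k>0$; with this strong convergence and the $L^1$ convergence of the data, the argument in the existence part of Theorem \ref{entropi} applies verbatim and shows that $u$ satisfies \eqref{entro001} and \eqref{eq:alcala} with $f(x)=\lambda u^{q}(x)+g(x)$, so $u$ is a positive entropy solution of \eqref{eq:exis2} (and $u\ge u_1\gneq 0$). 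The main obstacle is the closing of the a priori estimate in the previous paragraph: one must isolate from $q<p-1$ both the sublinear fixed-point inequality for $a_{n+1}$ in terms of $a_n$ and, already at the level of setting up the iteration, the fact that $u_n^q$ stays in $L^1$, which rests on the strict inequalities $q<p-1<p_1$.
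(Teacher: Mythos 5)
Your strategy is sound, but it is genuinely different from the paper's, and two of its supporting steps are invoked outside the scope in which the paper establishes them. The paper does not freeze the reaction term: it truncates only the datum, taking $u_n$ to be the energy solution (obtained variationally in $W^{s,p}_{\beta,0}(\Omega)$, with uniqueness from Lemma \ref{compa}) of $(-\Delta)^s_{p,\beta}u_n=u_n^{q}+T_n(g)$, and it obtains the uniform bound on $\|u_n^{p-1}\|_{L^1}$ by a normalization--contradiction argument ($v_n=u_n/C_n^{1/(p-1)}$, where $q<p-1$ forces the rescaled data to vanish in $L^1$, so $v\equiv 0$, contradicting $\|v^{p-1}\|_{L^1}=1$); Lemma \ref{compact} and the machinery of Theorem \ref{entropi} then apply directly because the approximating solutions are energy solutions with data converging in $L^1$. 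You instead freeze the nonlinearity at the previous iterate and solve at each step an $L^1$-datum problem via Theorem \ref{entropi}. Your a priori bound is correct and more quantitative than the paper's: the estimate $\|u\|_{\mathcal{M}^{p_1}(\Omega,d\mu)}\le C\|f\|_{L^1}^{1/(p-1)}$ does follow from the proof of Lemma \ref{one} (the exponent $p^{*}_{s}/(p\,p_1)=1/(p-1)$ is right), the embedding $\mathcal{M}^{p_1}(\Omega,d\mu)\subset L^{q}(\Omega)$ holds for $q<p_1$ on a bounded domain since $\beta\ge 0$, and the sublinear recursion $a_{n+1}\le C\big(a_n^{q/(p-1)}+\|g\|_{L^1}^{1/(p-1)}\big)$ closes because $q/(p-1)<1$; this isolates both uses of $q<p-1$ cleanly.

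The gap is in the scaffolding of your iteration and of the limit passage. Monotonicity $u_{n+1}\ge u_n$ is asserted from ``the weak comparison principle'', but the only comparison result in the paper, Lemma \ref{compa}, is stated for functions in $W^{s,p}_{\beta,0}(\Omega)$; your iterates are entropy solutions of problems with $L^1$ data and need not belong to the energy space, so the lemma does not cover them. Likewise, Lemma \ref{compact} assumes $\{u_n\}_n\subset W^{s,p}_{\beta,0}(\Omega)$, and the existence part of Theorem \ref{entropi} proceeds by inserting test functions such as $T_k(u_n-v)$ or $1-(1+u_n)^{-1}$ into the approximating problems \eqref{pro:lineal1}, which is legitimate only because those $u_n$ are energy solutions with bounded data; for your entropy iterates the argument is not ``verbatim''. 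The repair is a double approximation: at each outer step let $u_{n+1,m}$ be the energy solution with datum $T_m(\lambda u_n^{q}+g)$. Testing the difference of two such equations with the positive part of the difference gives comparison for energy solutions with ordered bounded data, hence monotonicity in both indices; Theorem \ref{entropi} identifies $u_{n+1}=\lim_m u_{n+1,m}$; and the diagonal sequence $w_j=u_{j,j}$ is an increasing sequence of energy solutions whose data converge strongly in $L^1$ to $\lambda u^{q}+g$, to which Lemma \ref{compact} and the proof of Theorem \ref{entropi} genuinely apply, yielding that $u=\lim_j w_j$ is a positive entropy solution of \eqref{eq:exis2}. With that restructuring your uniform Marcinkiewicz bound closes the argument; without it, the steps ``by comparison'', ``by Lemma \ref{compact}'' and ``applies verbatim'' are not justified by the statements you cite.
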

\begin{pf}
Without loss of generality we can assume that $\l=1$. We set $g_n=T_n(g)$, then $g_n\gneq 0$ and $g_n\uparrow g$ strongly in $L^1(\Omega)$. Define $u_n$ to be the unique solution to the approximated problem
\begin{equation}\label{appr1}
\left\{
\begin{array}{rcll}
(-\D)^s_{p,\beta} u_n &= & u^{q}_n+g_n &\mbox{ in }\O,\\
u_n &\ge & 0 & \mbox{  in   } \O,\\
u_n &= & 0 & \mbox {  in  }\ren\backslash \O.
\end{array}
\right.
\end{equation}
Notice that the existence of $u_n$ can be obtained as a critical point of the functional
$$
J(u)=\frac{1}{2p}\dyle\iint_{D_{\O}} \,|u(x)-u(y)|^{p}d\nu-\frac{1}{q+1}\io u^{q+1}_+dx-\io g_n\: u\:dx.
$$
However the uniqueness follows using the comparison result in Lemma \ref{compa}. It is clear that by the same comparison principle we obtain that $u_n\le u_{n+1}$.

We claim that $\{u^{p-1}_n\}_n$ is uniformly bounded in $L^1(\O)$. To prove the claim we argue by contradiction. Assume that $C_n\equiv ||u^{p-1}_n||_{L^1(\O)}\to \infty$ as $n\to \infty$. We set $v_n=\frac{u_n}{C^{\frac{1}{p-1}}_n}$, then $||v^{p-1}_n||_{L^1(\O)}=1$ and $v_n$ solves the problem
\begin{equation}\label{appr1001}
\left\{
\begin{array}{rcll}
(-\D)^s_{p,\beta} v_n &= & C^{\frac{q-p+1}{p-1}}_nv^{q}_n+C^{-1}_n g_n & \mbox{ in }\O,\\
v_n & \ge &  0  & \mbox{  in   } \O,\\
v_n & = & 0 & \mbox {  in  }\ren\backslash \O.
\end{array}
\right.
\end{equation}
We set $G_n\equiv C^{\frac{q-p+1}{p-1}}_nv^{q}_n+C^{-1}_n g_n$, then $||G_n||_{L^1(\O)}\to 0$ as $n\to \infty$.
Taking in consideration the results of Lemmas \ref{one} and \ref{two}
we get the existence of a measurable function $v$ such that $T_k(v)\in W^{s,p}_{\beta, 0}(\Omega)$, $v^{p-1}\in
L^\s(\O,|x|^{-2\beta} dx)$ for all $\s<\dfrac{N}{N-ps}$ and $T_k(v_n)\rightharpoonup T_k(v)$ weakly in $W^{s,p}_{\beta, 0}(\Omega)$.

Since $\s>1$, then using Vitali's Lemma we can prove that $v^{p-1}_n\to v^{p-1}$ strongly in $
L^1(\O)$. Thus $||v^{p-1}||_{L^1(\O)}=1$.

Now taking $T_k(v_n)$ as a test function in \eqref{appr1001}, using the fact that $||G_n||_{L^1(\O)}\to 0$, it follows that
$||T_k(v_n)||_{W^{s,p}_{\beta, 0}(\Omega)}\to 0$ as $n\to \infty$. Hence $T_k(v)=0$ for all $k$ and then $v\equiv 0$. Thus we reach a contradiction with the fact that $||v^{p-1}||_{L^1(\O)}=1$.

Therefore $||u^{p-1}_n||_{L^1(\O)}\le C$ for all $n$ and the claim follows.

Since $q<p-1$, we conclude that the sequence $\{u^q_n+g_n\}_n$ is bounded in $L^1(\O)$ and then we get the existence of a measurable function $u$ such that
$u^q_n\uparrow u^q$, $u^{p-1}\in
L^\s(\O,|x|^{-2\beta} dx)$ for all $\s<\dfrac{N}{N-ps}$ and $T_k(u_n)\rightharpoonup T_k(u)$ weakly in $W^{s,p}_{\beta, 0}(\Omega)$.

Since  $\{u^q_n+f_n\}_n$ is an increasing sequence, using Lemma \ref{compact}, we conclude that $T_k(u_n)\to T_k(u)$ strongly in $W^{s,p}_{\beta, 0}(\Omega)$.
Now, by Theorem \ref{entropi} we obtain that $u$ is an entropy solution to problem \eqref{eq:exis2}  in the sense of Definition \ref{def:entropy}.

We prove now that $u$ is the minimal solution of \eqref{eq:exis2}.

Let $\overline{u}$ be another entropy positive solution to problem \eqref{eq:exis2}. Recall that $u=\lim\limits_{n\to \infty}u_n$, so to finish we have to show that $u_n\le \overline{u}$ for all $n$. Fix $n$ and consider the sequence $\{w_{n,i}\}_i$, defined by $w_{n,0}=0$ and $w_{n,i+1}$ being the unique solution to problem
\begin{equation}\label{appr100}
\left\{
\begin{array}{rcll}
(-\D)^s_{p,\beta} w_{n,i+1} & = & w^q_{n,i}+ g_n & \mbox{ in }\O,\\
w_{n,i+1} & \ge &  0 & \mbox{  in   } \O,\\
w_{n,i+1}& = & 0 & \mbox {  in  }\ren\backslash \O.
\end{array}
\right.
\end{equation}
It is clear that the sequence $\{w_{n,i}\}_i$ is increasing in $i$ with $w_{n,i}\le u_n$ for all $i$. Hence $w_{n,i}\uparrow \bar{w}_n$ a
solution to problem \eqref{appr11}. Now by the comparison principle in Lemma \ref{compa} we conclude that $\bar{w}_n=u_n$,
and by an iteration argument we can prove that $w_{n,i}\le \overline{u}$ for all $i$. Hence $u_n\le \overline{u}$ and the result follows.
\end{pf}

In the case where $q=p-1$, the problem is related to the first eigenvalue of the operator $(-\D)^s_{p,\beta}$. More precisely, we set
\begin{equation}\label{first}
\l_1=
\inf_{\phi \in W^{s,p}_{\beta, 0}(\Omega),\,\,\phi \ne 0}\frac{\dyle\frac{1}{2}\iint_{D_{\O}} \,|\phi(x)-\phi(y)|^{p}d\nu}{\io |\phi|^p d x}.
\end{equation}
As in the case $\beta=0$, it is not difficult to show that $\l_1>0$ and that $\l_1$ is attained.

Now, we can formulate our existence result.

\begin{Theorem}\label{exisqp}
Assume that $q=p-1$. If $\l<\l_1$, then for all $g\in L^1(\O)$, problem \eqref{eq:exis2} has a minimal entropy positive solution.
\end{Theorem}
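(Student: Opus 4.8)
The plan is to run the approximation scheme of Theorem~\ref{exisqq}, the genuinely new feature being the role of the hypothesis $\l<\l_1$. Put $g_n=T_n(g)$, so $0\lneq g_n\uparrow g$ in $L^1(\O)$, and take $\l<\l_1$. For fixed $n$ the functional
\[
J_n(w)=\frac{1}{2p}\iint_{D_\O}|w(x)-w(y)|^p\,d\nu-\frac{\l}{p}\io (w_+)^p\,dx-\io g_n\,w\,dx
\]
is, by Remark~\ref{equiv} and the definition \eqref{first} of $\l_1$, coercive on $W^{s,p}_{\beta, 0}(\O)$ — its principal part dominates $|||w|||_{W^{s,p}_{\beta, 0}(\O)}^{p}$ up to the factor $1-\l/\l_1>0$, while $\io g_n w$ is absorbed via Theorem~\ref{Sobolev} since $g_n\in L^\infty(\O)$ — and weakly lower semicontinuous, hence has a minimizer $u_n\ge 0$ solving \eqref{appr1} with $q=p-1$. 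Since Lemma~\ref{compa} does not deliver uniqueness when $p\ge2$, I single out the \emph{minimal} nonnegative solution $\underline u_n$ exactly as in Theorem~\ref{exisqq}: set $w_{n,0}=0$ and let $w_{n,i+1}$ solve $(-\Delta)^s_{p,\beta}w_{n,i+1}=\l w_{n,i}^{p-1}+g_n$ (with the nonlinearity frozen, each step is again a coercive minimization); the weak comparison principle for $(-\Delta)^s_{p,\beta}$ gives $0\le w_{n,i}\le w_{n,i+1}\le u_n$, so $w_{n,i}\uparrow \underline u_n\le u_n$ and $\underline u_n$ is the minimal nonnegative solution of \eqref{appr1}. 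Comparing the iterations for consecutive indices, using $g_n\le g_{n+1}$, gives $\underline u_n\le \underline u_{n+1}$; from now on write $u_n:=\underline u_n$.

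The heart of the proof is the uniform estimate $\|u_n^{p-1}\|_{L^1(\O)}\le C$, proved by contradiction as in Theorem~\ref{exisqq}. Suppose $C_n:=\|u_n^{p-1}\|_{L^1(\O)}\to\infty$ and set $v_n=u_n\,C_n^{-1/(p-1)}$, so $\|v_n^{p-1}\|_{L^1(\O)}=1$ and $v_n$ solves $(-\Delta)^s_{p,\beta}v_n=\l v_n^{p-1}+C_n^{-1}g_n$ with right-hand side bounded in $L^1(\O)$ by $\l+\|g\|_{L^1(\O)}$. By Lemmas~\ref{one}--\ref{two} and Remark~\ref{r01}, along a subsequence $v_n\to v$ a.e., $T_k(v_n)\weakly T_k(v)$ in $W^{s,p}_{\beta, 0}(\O)$, $v^{p-1}\in L^{\s}(\O,d\mu)$ for every $\s<\tfrac{N}{N-ps}$, and, by Vitali's lemma together with the Marcinkiewicz bound of Lemma~\ref{one}, $v_n^{p-1}\to v^{p-1}$ strongly in $L^1(\O)$; hence $\|v^{p-1}\|_{L^1(\O)}=1$, so $v\gneq 0$. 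In contrast with the case $q<p-1$, the rescaled datum $C_n^{-1}g_n\to0$ in $L^1(\O)$ while the nonlinear term survives with unit coefficient; passing to the limit as in the proof of Theorem~\ref{mainth} shows that $v$ is an (entropy) solution of $(-\Delta)^s_{p,\beta}v=\l v^{p-1}$ in $\O$, $v=0$ in $\ren\setminus\O$. Testing the equation for $v_n$ with $T_k(v_n)$, using \eqref{general} on the left and the pointwise bound $v_n^{p-1}T_k(v_n)\le T_k(v_n)^p+k\int_{\{v_n>k\}}v_n^{p-1}\,dx$ on the right, and then absorbing $\l\io T_k(v_n)^p\,dx$ by means of \eqref{first}, one gets after letting $n\to\infty$ (weak lower semicontinuity of the seminorm, strong $L^1$ convergence of $v_n^{p-1}$, and $C_n^{-1}\|g_n\|_{L^1}\to0$)
\[
\Big(1-\frac{\l}{\l_1}\Big)\,\frac12\iint_{D_\O}|T_k(v)(x)-T_k(v)(y)|^p\,d\nu\ \le\ \l\,k\int_{\{v>k\}}v^{p-1}\,dx
\qquad\text{for all }k>0 .
\]
Since $v^{p-1}\in L^{\s}(\O,d\mu)$ with $\s>1$, the right-hand side tends to $0$ as $k\to\infty$, so by monotone convergence $\iint_{D_\O}|v(x)-v(y)|^p\,d\nu=0$, i.e. $v$ is constant, hence $v\equiv0$ in $\O$, contradicting $v\gneq 0$. (Equivalently, one obtains $v\in W^{s,p}_{\beta, 0}(\O)$ with $\tfrac12\iint_{D_\O}|v(x)-v(y)|^p\,d\nu=\l\io v^p\,dx$, forcing $\l_1\le\l$.) Therefore $\|u_n^{p-1}\|_{L^1(\O)}\le C$ for every $n$.

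With the uniform bound in hand, $\{\l u_n^{p-1}+g_n\}_n$ is bounded in $L^1(\O)$ and increasing, so there is a measurable $u$ with $u_n\uparrow u$, $u^{p-1}\in L^{\s}(\O,d\mu)$ for $\s<\tfrac{N}{N-ps}$, $T_k(u)\in W^{s,p}_{\beta, 0}(\O)$, and, by Lemma~\ref{compact}, $T_k(u_n)\to T_k(u)$ strongly in $W^{s,p}_{\beta, 0}(\O)$; Theorem~\ref{entropi} then yields that $u$ is an entropy positive solution of \eqref{eq:exis2}. Minimality follows the argument at the end of the proof of Theorem~\ref{exisqq}: if $\olu$ is another entropy positive solution, an induction on $i$ using the comparison principle for $(-\Delta)^s_{p,\beta}$ (and the admissibility of the relevant test functions in the entropy formulation of $\olu$, cf. Lemma~\ref{ness}) gives $w_{n,i}\le\olu$ for all $i$, hence $u_n=\underline u_n\le\olu$; letting $n\to\infty$ gives $u\le\olu$.

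I expect the uniform $L^1$-estimate on $u_n^{p-1}$ to be the main obstacle, and within it the step showing that the rescaled limit $v$ has finite energy (equivalently, that $k\int_{\{v>k\}}v^{p-1}\,dx\to0$), which is precisely where the higher summability $v^{p-1}\in L^{\s}(\O,d\mu)$ with $\s>1$ from Remark~\ref{r01} enters and where the condition $\l<\l_1$ is used, through \eqref{first}, to close the absorption. An alternative way to organize the same contradiction is to apply Picone's inequality (Theorem~\ref{Picone}) to $w=v$ tested against a first eigenfunction of $(-\Delta)^s_{p,\beta}$, together with the variational characterization \eqref{first} of $\l_1$.
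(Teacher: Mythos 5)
Your outline coincides with the paper's strategy (rescale, extract a limit $v$ of $v_n=u_nC_n^{-1/(p-1)}$ solving $(-\D)^s_{p,\beta}v=\l v^{p-1}$, and contradict $\|v^{p-1}\|_{L^1}=1$ using $\l<\l_1$), but the decisive step has a genuine gap. From the truncation argument you correctly arrive at
\begin{equation*}
\Big(1-\frac{\l}{\l_1}\Big)\,\frac12\iint_{D_\O}|T_k(v)(x)-T_k(v)(y)|^p\,d\nu\ \le\ \l\,k\int_{\{v> k\}}v^{p-1}\,dx ,
\end{equation*}
and you then claim the right--hand side vanishes as $k\to\infty$ because $v^{p-1}\in L^{\s}(\O,d\mu)$ for some $\s>1$. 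This does not follow from the available a priori information. Lemmas \ref{one}--\ref{two} only give $v\in \mathcal{M}^{p_1}(\O,d\mu)$ with $p_1=\frac{N(p-1)}{N-ps}$, i.e. $v^{p-1}\in L^{\s}$ for $\s<\frac{N}{N-ps}$; H\"older together with the Marcinkiewicz decay $|\{v>k\}|_{d\mu}\le Ck^{-p_1}$ (and $dx\le C\,d\mu$ on the bounded set $\O$) yields at best
\begin{equation*}
k\int_{\{v>k\}}v^{p-1}\,dx\ \le\ C\,k^{\,1-p_1\left(1-\frac1\s\right)},\qquad p_1\Big(1-\frac1\s\Big)\ \nearrow\ \frac{ps(p-1)}{N-ps},
\end{equation*}
and the exponent $1-\frac{ps(p-1)}{N-ps}$ is nonnegative whenever $p^2s\le N$, i.e. in essentially the whole admissible range of parameters. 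The natural sufficient condition for your claim is $v\in L^p(\O)$, and $p<p_1$ holds only if $p^2s>N$; so your absorption argument does not close in general. The missing ingredient is exactly a gain of integrability for the limit $v$, and this is where the paper's proof does its real work: it takes $v^{a_1}$ with $a_1=\frac{(p-1)N}{N-ps}-(p-1)-\e$ as a test function in \eqref{fina}, uses \eqref{alge3} and Theorem \ref{Sobolev} to improve the summability, iterates with $a_{n+1}=(a_n+p-1)\frac{p^*_s}{p}-(p-1)\uparrow\infty$, and then applies the Stampacchia-type Lemma \ref{bound} to conclude $v\in L^\infty(\O)$, so that $v$ is an energy solution; only then is it legitimate to test \eqref{fina} with $v$ itself (or to run your Picone variant) and use $\l<\l_1$ to force $v\equiv0$. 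Without this bootstrap, or an equivalent regularity step, the contradiction is not reached.

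Apart from this, your proposal follows the paper: the approximation \eqref{appr11} and monotonicity of $u_n$ (your observations that coercivity of the functional uses $\l<\l_1$, and that one may work with the minimal solution produced by the iteration $w_{n,i}$ rather than invoking uniqueness from Lemma \ref{compa}, whose hypothesis that $h(x,\s)/\s$ be decreasing is delicate for $q=p-1$ and $p>2$, are reasonable refinements), the rescaling and identification of the limit problem \eqref{fina}, and the conclusion via the uniform $L^1$ bound, Lemma \ref{compact}, Theorem \ref{entropi} and the iteration argument for minimality are all as in the paper. The only, but essential, omission is the integrability bootstrap (leading to $v\in L^\infty$) described above.
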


To prove Theorem \ref{exisqp}, we need the next classical regularity result.
\begin{Lemma} \label{bound}
Let $u$ be the unique solution to the problem
\begin{equation}\label{mainaa}
\left\{
\begin{array}{lll}
(-\Delta)^s_{p, \beta} u &= & f \mbox{ in }\O,\\
 u &= & 0  \mbox{ in }  \ren\setminus\O,
\end{array}
\right.
\end{equation}
where $|f||x|^{p^*_s\b}\in L^{m}(\Omega, |x|^{-p^*_s\b}\,dx)$ for some $m>\frac{N}{ps}$, then $u\in L^\infty(\O)$.
\end{Lemma}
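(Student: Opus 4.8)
This is a weighted nonlocal version of the classical Stampacchia $L^{\infty}$--estimate, carried out by testing with the functions $G_k(u)=u-T_k(u)$ and iterating over the super-level sets $A_k=\{x\in\O:|u(x)|>k\}$. First note that, since $|f||x|^{p^*_s\b}\in L^{m}(\O,|x|^{-p^*_s\b}\,dx)$ with $m>\frac{N}{ps}$, H\"older's inequality together with the weighted Sobolev embedding (Theorem \ref{Sobolev} and Remark \ref{equiv}) shows that $v\mapsto\io f(x)v(x)\,dx$ is a bounded linear functional on $W^{s,p}_{\beta, 0}(\O)$; hence the solution $u$ lies in $W^{s,p}_{\beta, 0}(\O)$ and, for every $k>0$, $G_k(u)$ is an admissible test function in the weak formulation.

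Using $G_k(u)$ as a test function and the algebraic inequality \eqref{general00} gives
\[
\frac 12\iint_{D_\O}|G_k(u(x))-G_k(u(y))|^{p}\,d\nu\le \io f(x)G_k(u(x))\,dx=\int_{A_k}f(x)G_k(u(x))\,dx,
\]
and the weighted Sobolev inequality of Theorem \ref{Sobolev}, applied to $G_k(u)$, then yields
\[
S\Big(\int_{\ren}\frac{|G_k(u)|^{p^*_s}}{|x|^{2\beta p^*_s/p}}\,dx\Big)^{p/p^*_s}\le \iint_{D_\O}|G_k(u(x))-G_k(u(y))|^{p}\,d\nu\le 2\int_{A_k}|f|\,|G_k(u)|\,dx.
\]

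The right--hand side is handled by a double application of H\"older: first with exponent $p^*_s$, to split off the factor $\big(\int_{\ren}|G_k(u)|^{p^*_s}|x|^{-2\beta p^*_s/p}dx\big)^{1/p^*_s}$, and then with exponent $m/(p^*_s)'$ (legitimate since $m>\frac{N}{ps}>(p^*_s)'$, the latter inequality being equivalent to $ps<N$), to use the integrability of $f$; the residual powers of $|x|$ are integrable on the bounded domain $\O$ thanks to $0\le\beta<\frac{N-ps}{2}$, and are absorbed into a power $\Phi(k)^{\theta}$ of $\Phi(k):=\int_{A_k}|x|^{-2\beta p^*_s/p}\,dx$, with $\theta=\theta(N,p,s,\beta,m)>0$. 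Absorbing the Sobolev term, one obtains $\big(\int_{\ren}|G_k(u)|^{p^*_s}|x|^{-2\beta p^*_s/p}dx\big)^{1/p^*_s}\le C\,\Phi(k)^{\theta/(p-1)}$. Since $|G_k(u)|\ge h-k$ on $A_h$ for $h>k$, this gives the recursive inequality
\[
\Phi(h)\le\frac{C}{(h-k)^{p^*_s}}\,\Phi(k)^{\delta},\qquad\delta:=\frac{p^*_s\,\theta}{p-1},\qquad h>k>0.
\]
As $\Phi$ is nonincreasing, the classical Stampacchia iteration lemma yields $\Phi(k_0)=0$ for some finite $k_0$ (depending only on $N,p,s,\beta,\O$ and the norm of $f$) \emph{provided} $\delta>1$; computing $\theta$ explicitly (for instance $\theta=1-\tfrac1{p^*_s}-\tfrac1m$ when $\beta=0$) shows that $\delta>1$ is equivalent to $m>\frac{N}{ps}$. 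Hence $\|u\|_{L^{\infty}(\O)}\le k_0$.

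The one genuinely delicate point is the weight bookkeeping in the step above: the H\"older exponents must be chosen so that the leftover power of $|x|$ is locally integrable (this uses $\beta<\frac{N-ps}{2}$) and, crucially, so that the iteration exponent $\delta$ is strictly larger than $1$ — and the latter is exactly the hypothesis $m>\frac{N}{ps}$, so the assumption cannot be weakened within this argument.
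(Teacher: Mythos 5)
Your skeleton is exactly the paper's: test \eqref{pro:lineal1}-type weak formulation with $G_k(u)$, use \eqref{general00}, apply the weighted Sobolev inequality of Theorem \ref{Sobolev}, run H\"older on $\int_{A_k}|f||G_k(u)|\,dx$, and close with Stampacchia's iteration lemma; the preliminary observation that $f$ gives a bounded functional so that $G_k(u)$ is admissible is a reasonable addition that the paper leaves implicit. The one place where you genuinely diverge is the weight bookkeeping, and it is precisely the step you announce but do not execute. The paper works throughout with the single measure $d\omega=|x|^{-p^*_s\beta}dx$ (the same measure appearing in the hypothesis on $f$): it performs one three-factor H\"older with exponents $p^*_s,\,m,\,r$, $\tfrac1{p^*_s}+\tfrac1m+\tfrac1r=1$, measures the level sets by $\Phi(k)=|A_k|_{d\omega}$, and obtains $\Phi(h)\le C(h-k)^{-\frac{p^*_s}{p-1}}\Phi(k)^{\frac{p^*_s}{p-1}(1-\frac1m-\frac1{p^*_s})}$, so the superlinearity exponent is independent of $\beta$ and is $>1$ exactly when $m>\frac{N}{ps}$ — no residual power of $|x|$ ever appears. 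You instead keep the Sobolev weight $|x|^{-2\beta p^*_s/p}$ in $\Phi(k)$ and push the mismatch with the hypothesis weight $|x|^{-p^*_s\beta}$ into an ``absorption'' step; but these two weights coincide only for $p=2$ (or $\beta=0$), so for $\beta>0$ your $\theta$, and hence $\delta$, depends on $\beta$ through that extra H\"older, and the claim that $\delta>1$ is \emph{equivalent} to $m>\frac{N}{ps}$ is only what you verified at $\beta=0$; in your bookkeeping one must check that the strictness of $m>\frac{N}{ps}$ leaves enough room to choose the auxiliary exponent so that $\delta>1$ while the leftover $|x|$-power stays locally integrable. Either carry out that computation explicitly or, more economically, adopt the paper's single-measure H\"older, which eliminates the issue. (In fairness, the passage from the Sobolev weight $|x|^{-2\beta p^*_s/p}$ of Theorem \ref{Sobolev} to the norm $L^{p^*_s}(\Omega,|x|^{-p^*_s\beta}dx)$ is also stated without comment in the paper, and is immediate only when $p\le 2$; so for $p>2$ both write-ups gloss the same point.)
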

\begin{pf}
We follow closely the Stampacchia argument given in \cite{St}. Using $G_{k}(u(x))$, with $k>0$, as a test function \eqref{mainaa},
and taking in consideration that
$$
\begin{array}{c}
U(x,y)\big(G_k (u(x))- G_k(u(y))\ge |G_k (u(x))- G_k(u(y)|^{p},
\end{array}
$$
where $U(x,y)=|u(x)-u(y)|^{p-2}(u(x)-u(y))$, we reach that
$$
\frac{1}{2}
\dint\dint_{D_\O}\frac{|G_{k}u(x)-G_{k}u(y)|^p}{|x-y|^{N+ps}}\frac{dx}{|x|^{\b}}\,
\frac{dy}{|y|^{\b}}\leq  \dint_{\Omega}{|f|}
\,|G_{k}(u(x))|\,dx.
$$
By the \textit{Weighted Sobolev Inequality} \eqref{Sobolev}, it follows that
$$
S\| G_k (u) \|_{L^{p^*_s} (\Omega, |x|^{-p^*_s\b}\,dx)}^p \leq   \int_{A_{k}}\,{|f|}\,|G_k (u(x))| dx
$$
where $A_k =\{ x \in \Omega\,: \, |u(x)|  \geq k\}$. We set $d\omega=\dfrac{dx}{|x|^{p^*_s\b}}$, then
\begin{eqnarray*}
\int_{A_{k}}\,|f|\,|G_k (u(x))| dx &= &\int_{A_{k}}\,(|f||x|^{p^*_s\b})\,|G_k (u(x))| d\omega\\
&\le & \| G_k (u) \|_{L^{p^*_s}(\O, d\omega)}^p \| (|f||x|^{p^*_s\b})\|_{L^m(\O, d\omega)}|A_k|^{1-\frac 1m-\frac{1}{p^*_s}}_{d\omega}.
\end{eqnarray*}
Thus
$$
C\| G_k (u) \|^{\frac{p-1}{p^*_s}}_{L^{p^*_s}(\O, d\omega)} \leq \| (|f||x|^{p^*_s\b})\|_{L^m(\O, d\omega)}|A_k|^{1-\frac 1m-\frac{1}{p^*_s}}_{d\omega}.
$$
Let $h>k$, since $A_h\subset A_k$, there results that
$$
(h-k) |A_h|^{\frac{p-1}{p^*_s}}_{d\omega} \leq \| (|f||x|^{p^*_s\b})\|_{L^m(\O, d\omega)}|A_k|^{1-\frac 1m-\frac{1}{p^*_s}}_{d\omega}.
$$
Hence
$$
|A_h|_{d\omega}\leq \frac{C \| (|f||x|^{p^*_s\b})\|^{\frac{p^*_s}{p-1}}_{L^m(\O, d\omega)}|A_k|^{\frac{p^*_s}{p-1}(1-\frac 1m-\frac{1}{p^*_s})}_{d\omega}
}{(h-k)^{\frac{p^*_s}{p-1}}}\,.
$$
We set $\Phi(k)=|A_h|_{d\omega}$, then
$$
\Phi(h)\le \frac{C \Phi^{\frac{p^*_s}{p-1}(1-\frac 1m-\frac{1}{p^*_s})}(k)
}{(h-k)^{\frac{p^*_s}{p-1}}}\,.
$$
Since $m>\frac{N}{ps}$, then $\frac{p^*_s}{p-1}(1-\frac 1m-\frac{1}{p^*_s})>1$. By the classical result of Stampacchia, see \cite{St}, we get the existence of
$k_0>0$ such that $\Phi(h)=0$ for all $h\ge k_0$, hence we conclude.
\end{pf}

{\bf Proof of Theorem \ref{exisqp}.}

We follow closely the argument used in the proof of Theorem \ref{exisqq}.

Define $u_n$ to be the unique solution to the approximated problem
\begin{equation}\label{appr11}
\left\{
\begin{array}{rcll}
(-\D)^s_{p,\beta} u_n & = & \l u^{p-1}_n+g_n & \mbox{ in }\O,\\
u_n & \ge & 0 & \mbox{  in   } \O,\\
u_n & = & 0 & \mbox {  in  }\ren\backslash \O.
\end{array}
\right.
\end{equation}
Notice that the existence of $u_n$ can be obtained as a critical point of the functional
$$
J(u_n)=\frac{1}{2p}\dyle\iint_{D_{\O}} \,|u(x)-u(y)|^{p}d\nu-\frac{\l}{p}\io |u|^{p}dx-\io g_n\: u\:dx.
$$
However the uniqueness follows using the comparison result in Lemma \ref{compa}. It is clear that using the same comparison principle we obtain that $u_n\le u_{n+1}$.

We claim that $\{u^{p-1}_n\}_n$ is uniformly bounded in $L^1(\O)$. We argue by contradiction. Assume that $C_n\equiv ||u^{p-1}_n||_{L^1(\O)}\to \infty$ as $n\to \infty$. We set $v_n=\frac{u_n}{C^{\frac{1}{p-1}}_n}$, then $||v^{p-1}_n||_{L^1(\O)}=1$ and $v_n$ solves the problem
\begin{equation}\label{appr13}
\left\{
\begin{array}{rcll}
(-\D)^s_{p,\beta} v_n & = & \l v^{p-1}_n+\dfrac{g_n}{C_n} &\mbox{ in }\O,\\
v_n & \ge & 0 & \mbox{  in   } \O,\\
v_n & = & 0 & \mbox {  in  }\ren\backslash \O.
\end{array}
\right.
\end{equation}
We set $G_n\equiv v^{p-1}_n+\frac{g_n}{C_n}$, then $||G_n||_{L^1(\O)}\le C$. Taking in consideration the results of Lemmas \ref{one} and \ref{two}
we get the existence of a measurable function $v$ such that $T_k(v)\in W^{s,p}_{\beta, 0}(\Omega)$, $v^{p-1}\in
L^\s(\O,|x|^{-2\beta} dx)$ for all $\s<\dfrac{N}{N-ps}$ and $T_k(v_n)\rightharpoonup T_k(v)$ weakly in $W^{s,p}_{\beta, 0}(\Omega)$.

Since $\s>1$, then using Vitali's Lemma we can prove that $v^{p-1}_n\to v^{p-1}$ strongly in $L^1(\O)$. Thus $||v^{p-1}||_{L^1(\O)}=1$. It is clear that $G_n\to \l v^{p-1}$ strongly in $L^1(\Omega)$. Thus $v$ solves

\begin{equation}\label{fina}
\left\{
\begin{array}{rcll}
(-\D)^s_{p,\beta} v & =& \l v^{p-1} & \mbox{ in }\O,\\
v & \gneq & 0 &  \mbox{  in   } \O,\\
v & = & 0 & \mbox {  in  }\ren\backslash \O.
\end{array}
\right.
\end{equation}
We claim that $v\in L^\infty(\Omega)$. From the previous discussion we know that $v^{p-1}\in
L^\s(\O,|x|^{-2\beta} dx)$ for all $\s<\dfrac{N}{N-ps}$. Thus setting $a_1=\dfrac{(p-1)N}{N-ps}-(p-1)-\epsilon$, with $\e$ very small,
and using an approximation argument, we can take $v^{a_1}$ as a test function in \eqref{fina} to conclude that
$$
\iint_{D_{\O}} \,|v(x)-v(y)|^{p-2}(v(x)-v(y))(v^{a_1}(x)-v^{a_1}(y))d\nu \le C.
$$
Hence using inequality \eqref{alge3}, it follows that
$$
\iint_{D_{\O}} \,|v^{\frac{a_1+p-1}{p}}(x)-v^{\frac{a_1+p-1}{p}}(y)|^pd\nu \le C.
$$
Using the Sobolev inequality in Lemma \ref{Sobolev}, we reach that
$$
\io \dfrac{|v(x)|^{(a_1+p-1)\frac{p^*_s}{p}}}{|x|^{p_{s}^{*}\beta}}dx<\infty.
$$
Now, we set
$a_2=(a_1+p-1)\frac{p^*_s}{p}-(p-1)$, then using $v^{a_2}$ as a test function in \eqref{fina} and following the same argument as above we conclude that
$$
\io\dfrac{|v(x)|^{(a_2+p-1)\frac{p^*_s}{p}}}{|x|^{p_{s}^{*}\beta}}dx<\infty.
$$
Consider now the sequence $a_{n+1}=(a_n+p-1)\frac{p^*_s}{p}-(p-1)$. It is clear that $a_n\uparrow\infty$ and by an induction argument we can prove that $\io v^{a_n}dx <\infty$ for all $n$. Thus using Theorem \ref{bound}, we conclude that $v$ is an energy solution to problem \eqref{fina} and that
$v\in L^\infty(\O)$. Now using $v$ as a test function in \eqref{fina}, taking in consideration that $\l<\l_1$, it follows that $||v||_{W^{s,p}_{\beta, 0}(\Omega)}=0$, a contradiction with the fact that $||v^{p-1}||_{L^1(\O)}=1$. Hence the claim follows. Now the rest of the proof follows exactly the same argument as in the proof of Theorem \ref{exisqq}. \cqd

In the case where $q>p-1$, we need to assume additional conditions on $g$. More precisely, if $g\in L^1(\O)$, we define $w$ to be
the unique positive solution to problem
\begin{equation}\label{eq:arg}
\left\{
\begin{array}{rcll}
(-\D)^s_{p,\beta} w & = & g & \mbox{ in }\O,\\
w & = & 0 & \mbox {  in  }\ren\backslash \O.
\end{array}
\right.
\end{equation}
We are able to prove the following result.
\begin{Theorem}\label{th:bbc}
Assume that $g\in L^1(\O)$ verifies $w^q(x)\le g(x)$ a.e. in $\Omega$, then there exists a positive constant $\bar{\l}$ such for all $\l<\bar{\l}$, the problem
(\ref{eq:exis2}) has a minimal entropy positive solution.
\end{Theorem}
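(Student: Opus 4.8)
The plan is to use the structural hypothesis $w^q\le g$ to produce an explicit supersolution out of $w$, and then to run a monotone iteration; note that for $q>p-1$ the rescaling argument of Theorems \ref{exisqq} and \ref{exisqp} is not available, since the coefficient $C_n^{\frac{q-p+1}{p-1}}$ would blow up. First I would record that, by the $(p-1)$-homogeneity of the operator, $(-\Delta)^s_{p,\beta}(\sigma w)=\sigma^{p-1}g$ for every $\sigma>0$, so $\sigma w$ is a supersolution of \eqref{eq:exis2} as soon as $\sigma^{p-1}g\ge \l\sigma^q w^q+g$ a.e.\ in $\O$; using $w^q\le g$ this is implied by $(\sigma^{p-1}-1)g\ge \l\sigma^q g$, hence by $\l\le\psi(\sigma):=\frac{\sigma^{p-1}-1}{\sigma^q}$ (on $\{g=0\}$ the hypothesis forces $w=0$, so both sides vanish). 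The function $\psi$ vanishes at $\sigma=1$ and, because $q>p-1$, also at $\sigma=+\infty$, and attains its maximum at $\sigma^\ast=\big(\tfrac{q}{q-p+1}\big)^{1/(p-1)}>1$. I would set $\bar\l:=\psi(\sigma^\ast)=\max_{\sigma>1}\psi(\sigma)>0$, so that for every $\l<\bar\l$ the function $\bar u:=\sigma^\ast w$ is a (generalised) supersolution to \eqref{eq:exis2}.

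Fix $\l<\bar\l$. As before I would approximate $g$ by $g_n=T_n(g)\uparrow g$ and build $u_n$ by the monotone scheme $u_{n,0}=0$, with $u_{n,k+1}$ the unique energy solution of $(-\Delta)^s_{p,\beta}u_{n,k+1}=\l u_{n,k}^q+g_n$, which is solvable because $\l u_{n,k}^q+g_n\in L^\infty(\O)$. The basic comparison principle for $(-\Delta)^s_{p,\beta}$ with frozen right hand side (test the difference of two equations with the positive part of the difference of the solutions and use monotonicity of $t\mapsto|t|^{p-2}t$), applied against $\bar u$ through its truncations $T_m(\bar u)$, gives inductively $0\le u_{n,k}\le u_{n,k+1}\le\bar u$; hence $u_{n,k}\uparrow u_n$ with $0\le u_n\le\bar u$ and $(-\Delta)^s_{p,\beta}u_n=\l u_n^q+g_n$, and comparing data also $u_n\le u_{n+1}$. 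The role of the barrier is that $u_n^q\le(\sigma^\ast)^qw^q\le(\sigma^\ast)^qg\in L^1(\O)$, so $\{\l u_n^q+g_n\}_n$ is bounded in $L^1(\O)$.

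From here I would reproduce the scheme of Section \ref{sec2}: Lemmas \ref{one} and \ref{two} produce a measurable $u$ with $T_k(u)\in W^{s,p}_{\beta, 0}(\O)$, $u^{p-1}\in L^\s(\O,d\mu)$ for $\s<\frac{N}{N-ps}$, $u_n\uparrow u$ a.e.\ and $T_k(u_n)\weakly T_k(u)$; since $\{u_n\}_n$ is increasing and $(-\Delta)^s_{p,\beta}u_n\ge0$, Lemma \ref{compact} upgrades this to $T_k(u_n)\to T_k(u)$ strongly in $W^{s,p}_{\beta, 0}(\O)$. Moreover $u_n^q\uparrow u^q$ with $u_n^q\le(\sigma^\ast)^qg$, so $\l u_n^q+g_n\to \l u^q+g$ strongly in $L^1(\O)$, and the passage to the limit in the entropy formulation goes exactly as in the proof of Theorems \ref{mainth}--\ref{entropi}; thus $u$ is an entropy solution of \eqref{eq:exis2}, positive since $u\ge u_{1,1}>0$ in $\O$. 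Minimality is obtained as in the proof of Theorem \ref{exisqq}: given another entropy positive solution $\overline u$, the auxiliary iteration $w_{n,0}=0$, $(-\Delta)^s_{p,\beta}w_{n,i+1}=\l w_{n,i}^q+g_n$, satisfies $w_{n,i}\le\overline u$ for all $i$ by induction — using Lemma \ref{ness} applied to $\overline u$ with datum $\l\overline u^q+g$ — and $w_{n,i}\uparrow u_n$, whence $u_n\le\overline u$ and $u\le\overline u$.

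The step I expect to be the main obstacle is making the supersolution argument rigorous when $g$ is only in $L^1(\O)$: then $w$ need not belong to $W^{s,p}_{\beta, 0}(\O)$, so the meaning of ``$\bar u=\sigma^\ast w$ is a supersolution'' and the comparisons $u_{n,k}\le\bar u$ have to be handled through the truncations $T_m(\bar u)$ together with \eqref{general}-type inequalities, and the comparison must be used in its frozen-coefficient form, since Lemma \ref{compa} no longer applies once $q>p-1$.
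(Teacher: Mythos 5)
Your argument is correct and follows the same global strategy as the paper: a supersolution built out of $w$ via the hypothesis $w^q\le g$, a monotone approximation trapped below that supersolution, and then the compactness machinery of Lemmas \ref{one}, \ref{two} and \ref{compact} together with the entropy passage to the limit from Theorem \ref{entropi}, with minimality obtained as in Theorem \ref{exisqq}. Where you genuinely differ is the barrier. The paper introduces the auxiliary problem $(-\D)^s_{p,\beta}v=g+w^q$, bounds its minimal solution by a multiple of $w$ (stated there as $v\le 2^{p-1}w$; by $(p-1)$-homogeneity the natural constant is $2^{1/(p-1)}$) and arrives at $\bar\l=2^{\frac{q}{1-p}}$, whereas you take the supersolution directly as $\sigma^{\ast}w$ with $\sigma^{\ast}$ maximizing $\psi(\sigma)=(\sigma^{p-1}-1)/\sigma^{q}$. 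This bypasses the auxiliary problem and the comparison $v\le Cw$ entirely, and yields the explicit threshold $\bar\l=\max_{\sigma>1}\psi(\sigma)$, which is at least as large as the paper's value (take $\sigma^{p-1}=2$). Your iteration is double-indexed (solve the approximate problem with datum $g_n$ exactly, then let $n\to\infty$) while the paper couples the indices in \eqref{eq:arg1}; both versions are treated by the same comparison and stability tools. Two points to tighten: the claim that $\l u_{n,k}^{q}+g_n\in L^{\infty}(\O)$ does not follow from $u_{n,k}\le\bar u$ (which is only $L^1$ information), but it does follow inductively from Lemma \ref{bound} (bounded datum implies bounded solution), exactly as in the proof of Theorem \ref{exisqp}; and, as you yourself flag, the comparisons against $\bar u=\sigma^{\ast}w$ and against a competing entropy solution must be run through Lemma \ref{ness} and the truncations $T_m(\bar u)$, since $\bar u$ is only an entropy solution with $L^1$ datum --- this is precisely how the paper justifies $u_n\le v$ and the minimality step, so no new difficulty arises.
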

\begin{pf} Recall that by the results of Lemmas \ref{one} and \ref{two}, we know that
$w^{p-1}\in
L^\s(\O,|x|^{-2\beta} dx)$ for all $\s<\dfrac{N}{N-ps}$ and $T_k(w)\in W^{s,p}_{\beta, 0}(\Omega)$.

Let $v$ be the minimal solution to the problem
\begin{equation}\label{vv1}
\left\{
\begin{array}{rcll}
(-\D)^s_{p,\beta} v &= & g+w^q & \mbox{ in }\O,\\
v & = & 0 & \mbox {  in  }\ren\backslash \O.
\end{array}
\right.
\end{equation}
It is not difficult to show that $ v\le
2^{p-1}w$, hence using the hypothesis on $g$, it follows that
$$ (-\D)^s_{p,\beta}v=g+w^q\ge g+
2^{\frac{q}{1-p}}v^q. $$ Then $v$ is a
supersolution to (\ref{eq:exis2}) for $\l\le \bar{\l}=2^{\frac{q}{1-p}}$. Fixed $\l$ as above and define the sequence $\{u_n\}_n$ by $u_0=0$, $u_{n+1}$ is the unique solution to the following problem
\begin{equation}\label{eq:arg1}
\left\{
\begin{array}{c}
(-\D)^s_{p,\beta} u_{n+1}=u^q_n+g_{n+1}\mbox{ in }\O,\\
u_{n+1}=0\mbox {  in  }\ren\backslash \O.
\end{array}
\right.
\end{equation}
By an induction argument we can prove that $u_n\le v$ for all $n$ and that the sequence $\{u_n\}_n$ is increasing in $n$.
Thus $\{u^q_n+g_n\}_n$ is increasing and bounded in $L^1(\O)$. Now, using the same compactness argument as in the proof of Theorems \ref{exisqq} and \ref{exisqp} we get the existence result.
\end{pf}

\section{Appendix}\label{sec:s04}
\subsection{Harnack Inequality }

This section is devoted to prove a weak version of the Harnack Inequality for positive supersolution to problem \eqref{eq:def}. Let begin by the next definition.
\begin{Definition}\label{super00}
Let $v\in W^{s,p}_{\b,loc}(\Omega)$, we say that $v$ is
supersolution to problem \eqref{eq:def} if for all
$\O_1\subset\subset \O$, we have
\begin{equation}\label{super}
\iint_{{D_{\Omega_1}}}|v(x)-v(y)|^{p-2}(v(x) - v(y))(\varphi(x)- \varphi(y))d\nu\ge
\dint_{\Omega_1}\,f\varphi\,dx,\end{equation} for any nonnegative
$ \varphi \in W^{s,p}_{\b, 0}(\Omega_1)$.
\end{Definition}
The main result of the appendix is the next version of the weak Harnack inequality.
\begin{Theorem}{\it(Weak Harnack inequality)}\label{harnack}\\
Assume that $f\ge 0$ and let $v\in W^{s,p}_\b (\ren)$ be  a
supersolution to \eqref{eq:def} with $v\gneqq 0$ in $\ren $. Then
for any $q<\frac{N(p-1)}{N-ps}$, we have
\begin{equation}\label{main}
\Big(\int_{B_r}v^q|x|^{-2\beta} dx\Big)^{\frac 1q}\le C
\inf_{B_{\frac{3r}{2}}}v.
\end{equation}
\end{Theorem}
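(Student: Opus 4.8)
The strategy is to adapt the nonlocal De Giorgi--Moser machinery of \cite{CKP}, carrying the weight $|x|^{-\b}$ through every estimate as in \cite{AMPP2}. Two simplifications help: since $v\gneqq 0$ on all of $\ren$, the nonlocal ``tail'' of the negative part $v_-$ vanishes, so every nonlocal contribution from $\ren\setminus B$ appearing below is either zero or of the sign favourable for a supersolution and may be discarded; and it is convenient to work throughout with $v_d=v+d$ for $d>0$, letting $d\to0$ at the end. First I would establish a weighted Caccioppoli inequality: choosing $\varphi=(v-k)_-\eta^p$ ($k>0$, $\eta\in\mathcal{C}^\infty_0$ a cutoff) in \eqref{super} and using the algebraic inequalities of Lemma \ref{algg} and $f\ge0$, one bounds $\iint_{B\times B}|w(x)-w(y)|^p\,d\nu$ for $w=(v-k)_-$ by $C\int_{B'}|w|^p\,d\mu$ on a slightly larger ball $B'$. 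Iterating this over levels $k_j=(1+2^{-j})\delta k$ on a shrinking family of balls, and using the weighted fractional Sobolev inequality of Theorem \ref{Sobolev} to gain integrability at each step, yields an expansion of positivity: for each $\gamma\in(0,1)$ there is $\delta=\delta(\gamma)\in(0,1)$ such that $\mu(\{v\ge k\}\cap B_{2r})\ge\gamma\,\mu(B_{2r})$ forces $v\ge\delta k$ on $B_r$.

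Next I would prove the logarithmic estimate. Testing \eqref{super} with $\varphi=\eta^p v_d^{\,1-p}$, expanding $(-\Delta)^s_{p,\b}$ and invoking the pointwise bound $|a-b|^{p-2}(a-b)\big((d+a)^{1-p}-(d+b)^{1-p}\big)\ge c\,\big|\log(d+a)-\log(d+b)\big|^p$ together with $f\ge0$, one obtains
$$
\iint_{B_r\times B_r}\Big|\log\frac{v_d(x)}{v_d(y)}\Big|^p\,d\nu\le C\,\mu(B_r).
$$
Since $d\mu=|x|^{-2\b}dx$ is a doubling measure on $\ren$ and the right-hand side is comparable to $\mu(B_r)$, a John--Nirenberg argument shows $\log v_d\in\mathrm{BMO}(\ren,d\mu)$, so there is a small $t_0>0$ with
$$
\Big(\frac{1}{\mu(B_r)}\int_{B_r}v_d^{\,t_0}\,d\mu\Big)\Big(\frac{1}{\mu(B_r)}\int_{B_r}v_d^{\,-t_0}\,d\mu\Big)\le C .
$$

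It remains to combine the two. On the negative-exponent side, $v_d^{-\gamma}$ ($\gamma>0$) satisfies a nonlocal inequality of subsolution type, to which the Caccioppoli estimate and Theorem \ref{Sobolev} apply, giving by Moser iteration $\inf_{B_{3r/2}}v_d\ge c\,\big(\mu(B_{3r/2})^{-1}\int_{B_{3r/2}}v_d^{\,-t_0}\,d\mu\big)^{-1/t_0}$ (alternatively, the expansion of positivity together with a Krylov--Safonov covering yields the same lower bound). On the positive side, a further Moser iteration using \eqref{alge3} and Theorem \ref{Sobolev}, along the lines of the proof of Lemma \ref{two}, gives $\big(\mu(B_r)^{-1}\int_{B_r}v_d^{\,q}\,d\mu\big)^{1/q}\le C\,\big(\mu(B_{3r/2})^{-1}\int_{B_{3r/2}}v_d^{\,t_0}\,d\mu\big)^{1/t_0}$ for every $q<\frac{N(p-1)}{N-ps}$ (the same range as in Lemma \ref{one}). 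Chaining the positive Moser estimate, the logarithmic bridge and the negative Moser estimate, and then letting $d\to0$, yields \eqref{main}.

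The hard part will be the interaction between the singular weight and the geometry of balls meeting the origin: when $0\in B_{2r}$ the weighted measure $\mu(B_r)$ is not comparable to $r^N$, so the iterations, the John--Nirenberg argument and the covering must all be performed with respect to $d\mu$ and to the weighted Sobolev exponent $p^*_s$ with weight $|x|^{-2\b p^*_s/p}$ rather than with Lebesgue measure; this is exactly where the refinements of \cite{AMPP2} are needed on top of \cite{CKP}. A secondary, purely technical, point is to justify the unbounded test functions $v_d^{\,1-p}$ and $v_d^{\,-\gamma}$ by truncation and approximation.
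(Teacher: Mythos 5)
Your overall scheme is the classical Moser scheme (Caccioppoli estimates for positive and negative powers, a logarithmic estimate, and a John--Nirenberg crossover for $\log v$), whereas the paper deliberately avoids the crossover: after the logarithmic lemma (Lemma \ref{lema1}, where the weighted Poincar\'e--Wirtinger inequality of Theorem \ref{PW} turns the $\log$-estimate into a measure estimate on sublevel sets) and the De Giorgi-type expansion of positivity (Lemma \ref{lema2}), the infimum bound with a small positive exponent is obtained through the Krylov--Safonov covering lemma of \cite{CKP} (Lemma \ref{cover}, used in Lemma \ref{tres}), and the exponent is then raised to any $q<\frac{N(p-1)}{N-ps}$ by the reverse H\"older inequality of Lemma \ref{dos} (your ``positive-side Moser iteration'' is exactly that lemma). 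So the ingredients largely coincide; the genuine difference is the bridge between positive and negative exponents: you use BMO and John--Nirenberg with respect to $d\mu$, the paper uses the covering argument, which is precisely the alternative you mention in parentheses and is what the appendix announces (``contrary to the local case where Moser type iteration is used\dots''). Your route is viable, but it requires two inputs the paper never needs: a John--Nirenberg inequality on the space of homogeneous type $(\ren,|x|^{-2\b}dx)$ valid for all balls (Theorem \ref{PW} is only stated for balls centred at the origin, so the intermediate regime where $B$ meets but is not centred at the origin has to be handled), and a Caccioppoli estimate for negative powers of $v_d$ (a power of a nonlocal supersolution is not a subsolution pointwise; the estimate must be derived by testing, as in \cite{CKP1}). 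The covering route buys exactly this economy: no negative exponents and no BMO.

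One point in your sketch is genuinely too optimistic: the nonlocal contributions from $\ren\setminus B$ cannot simply be ``discarded''. What vanishes thanks to $v\ge 0$ is only the tail of $v_-$. In the logarithmic estimate the exterior term enters with the unfavourable sign and must be bounded from above, using $|v(x)-v(y)|^{p-2}(v(x)-v(y))\le v(x)^{p-1}$ for $v(y)\ge 0$, by $\iint \psi^p(x)\,d\nu$, and this quantity is finite with the correct scaling $Cr^{N-ps-2\b}$ only after an explicit computation of the weighted kernel (the integral $\int \tau^{N-\b-1}D(\tau)\,d\tau$ in the paper), which is where $\b<\frac{N-ps}{2}$ enters. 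Likewise, in the level-set iteration the tail contributes the term $l_j^p\,|B_j\cap\{v<l_j\}|_{d\mu}\,\sup_x\int_{\ren\setminus B_{\rho_j}}|x-y|^{-N-ps}\,dy$, which has to be carried through the iteration, not dropped. With these tail estimates supplied and the weighted John--Nirenberg lemma actually proved, your plan goes through; as written, those are the gaps to fill.
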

Contrary to the local case where Moser type iteration is used to get the Harnack inequality, see \cite{Cs}, in this
we will use a different approach.

For $\beta=0$, the result was obtained in \cite{CKP} where a general version of The Harnack inequality is proved including for sign-changing solutions.

The case $\beta>0, p=2$ and positive datum was obtained in \cite{AMPP2}. Here we combine both arguments to prove Theorem \ref{harnack}.

Recall that $d\mu(x)\equiv \dfrac{dx}{|x|^{2\beta}}$ and $d\nu\equiv \dfrac{dxdy}{|x-y|^{N+ps}|x|^\beta|y|^\beta}$. Notice that we have just to consider the case where $B_r(x_0)=B_r(0)$.

For simplicity of  typing, we will write $B_r$ in place of $B_r(0)$. We will use systematically the next  weighted version of the
Poincar\'{e}-Wirtinger inequality. We refer to the Appendix of \cite{AMPP} for the proof.
\begin{Theorem}\label{PW}
Let $w\in W^{s,p}_\b(B_2)$ and assume that $\psi$ is a radial
decreasing function such that $\text{Supp}\:\psi\subset B_1$ and
$0\lneqq \psi\le 1$. Define
$$
W_\psi=\dfrac{\int_{B_{1}}w(x)\psi(x)d\mu}{\int_{B_1}\psi(x)d\mu},
$$
then $$ \int_{B_1}|w(x)-W_\psi|^p\psi(x)d\mu\le
C\int_{B_1} \int_{B_1}|w(x)-w(y)|^p\min\{\psi(x), \psi(y)\}d\nu.
$$
\end{Theorem}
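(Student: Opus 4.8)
The statement is a weighted fractional Poincar\'e--Wirtinger inequality, and I would prove it by the layer-cake strategy that is standard for such estimates in the nonlocal setting (cf.\ the unweighted case in \cite{CKP}, the weighted one in \cite{AMPP}). First, since $W_\psi$ is the average of $w$ against the probability measure $\psi\,d\mu/\Psi$, with $\Psi:=\int_{B_1}\psi\,d\mu$, Jensen's inequality gives
$$\int_{B_1}|w-W_\psi|^p\psi\,d\mu\le \frac1{\Psi}\iint_{B_1\times B_1}|w(x)-w(y)|^p\psi(x)\psi(y)\,d\mu(x)\,d\mu(y),$$
so it suffices to dominate this double integral by $\Psi$ times the right-hand side of the theorem. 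The tool is the radial monotonicity of $\psi$: its super-level sets $\{\psi>t\}$ are concentric balls $B_{\rho(t)}$ with $\rho(\cdot)$ nonincreasing and $\rho(t)\le1$, whence the layer-cake identities $\psi(x)=\int_0^{\|\psi\|_\infty}\chi_{B_{\rho(t)}}(x)\,dt$ and $\min\{\psi(x),\psi(y)\}=\int_0^{\|\psi\|_\infty}\chi_{B_{\rho(t)}}(x)\chi_{B_{\rho(t)}}(y)\,dt$. These turn both sides into superpositions over the family $\{B_\rho\}_{\rho\le1}$ of balls centred at the origin.

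The building block is a scaled weighted fractional Poincar\'e inequality on these balls: for every $\rho\le1$,
$$\int_{B_\rho}\big|w-(w)_{B_\rho,\mu}\big|^p\,d\mu\le C\,\rho^{ps}\iint_{B_\rho\times B_\rho}|w(x)-w(y)|^p\,d\nu,\qquad (w)_{B_\rho,\mu}:=\frac1{\mu(B_\rho)}\int_{B_\rho}w\,d\mu,$$
with $C$ independent of $\rho$. This follows from the case $\rho=1$ by the dilation $x\mapsto\rho x$, under which $\int(\cdot)\,d\mu$ picks up a factor $\rho^{N-2\beta}$ and $\iint(\cdot)\,d\nu$ a factor $\rho^{N-ps-2\beta}$, producing the $\rho^{ps}$. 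The case $\rho=1$ I would prove by the classical compactness--contradiction argument: the right-hand side is a seminorm on $W^{s,p}_\beta(B_1)$ vanishing exactly on constants, and the embedding $W^{s,p}_\beta(B_1)\hookrightarrow L^p(B_1,d\mu)$ is compact (a weighted fractional Rellich theorem, obtainable from Theorem~\ref{Sobolev} together with the usual compactness criterion for fractional embeddings), so a normalized minimizing sequence would converge in $L^p(B_1,d\mu)$ to a nonconstant function of zero seminorm, a contradiction.

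To assemble, I would slice both sides by $t$. The right-hand side of the theorem becomes $\int_0^{\|\psi\|_\infty}\big(\iint_{B_{\rho(t)}\times B_{\rho(t)}}|w(x)-w(y)|^p\,d\nu\big)\,dt$, while the left-hand side becomes $\int_0^{\|\psi\|_\infty}\int_{B_{\rho(t)}}|w-W_\psi|^p\,d\mu\,dt$, and one splits $|w-W_\psi|^p\le 2^{p-1}|w-(w)_{B_{\rho(t)},\mu}|^p+2^{p-1}|W_\psi-(w)_{B_{\rho(t)},\mu}|^p$. The first part is handled immediately by the building block (using $\rho(t)^{ps}\le1$), and integrating in $t$ reproduces precisely the right-hand side of the theorem. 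For the second part one uses that, by the layer-cake formula, $W_\psi$ is a $\mu(B_{\rho(\tau)})\,d\tau$-weighted average of the ball-means $(w)_{B_{\rho(\tau)},\mu}$; Jensen then bounds $|W_\psi-(w)_{B_{\rho(t)},\mu}|^p$ by a weighted mean of the differences $|(w)_{B_{\rho(\tau)},\mu}-(w)_{B_{\rho(t)},\mu}|^p$, and each such difference is controlled, via a telescoping sum over the dyadic scales between $\rho(\tau)$ and $\rho(t)$ and the building block at those scales, by the energies $\iint_{B_{\rho_j}\times B_{\rho_j}}|w(x)-w(y)|^p\,d\nu$; a Schur-type summation finally absorbs the normalization $1/\Psi$ and the scale factors (here one uses $\mu(B_\rho)\simeq\rho^{N-2\beta}$).

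I expect the last step --- the estimate for the difference-of-means term --- to be the main obstacle. It is exactly the place where the non-local kernel $|x-y|^{-(N+ps)}$ must be exploited to link different dyadic annuli, and the bookkeeping that balances the telescoping dyadic sum against the normalization $1/\Psi$ and the support profile of $\psi$ is delicate: the crude termwise bound fails (it already fails for $\beta=0$), so the averaging and the precise layer-cake structure of $W_\psi$ and $\Psi$ are essential. The presence of the weight $|x|^{-\beta}$, singular at the common centre of all the balls, is what forces the whole argument to respect the concentric structure and changes the scaling exponents from $\rho^{N},\rho^{N-ps}$ to $\rho^{N-2\beta},\rho^{N-ps-2\beta}$; verifying that the Schur-type summation still closes with these exponents --- which it does because $2\beta<N-ps$ --- is the technical heart, while the weighted fractional Rellich compactness behind the base case $\rho=1$ is comparatively routine.
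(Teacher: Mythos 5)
You should first note that the paper itself contains no proof of Theorem \ref{PW}: it is imported from the Appendix of \cite{AMPP}, so your argument has to stand on its own. Parts of it do: the Jensen step, the layer--cake decomposition over the super-level sets $\{\psi>t\}=B_{\rho(t)}$, the scaled Poincar\'e building block on concentric balls (the $\rho$-independence of the constant is indeed correct because $|x|^{-2\beta}$ is homogeneous and all the balls are centred at its singularity, granted the compact embedding $W^{s,p}_\beta(B_1)\hookrightarrow L^p(B_1,d\mu)$ that you only assert), and the oscillation term, which together with $\int_0^{\|\psi\|_\infty}\chi_{B_{\rho(t)}}(x)\chi_{B_{\rho(t)}}(y)\,dt=\min\{\psi(x),\psi(y)\}$ is genuinely controlled. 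But the difference-of-means term $\int_0^{\|\psi\|_\infty}\mu(B_{\rho(t)})\,\bigl|W_\psi-(w)_{B_{\rho(t)},\mu}\bigr|^p\,dt$ is only narrated (telescoping over dyadic scales plus a ``Schur-type summation''), and you say yourself you have not carried it out. That term is the whole content of the theorem: it is exactly where the product weight $\psi(x)\psi(y)\,d\mu(x)\,d\mu(y)/\Psi$ has to be traded for $\min\{\psi(x),\psi(y)\}\,d\nu$, and these measures are not pointwise comparable near the origin (the densities are $|x|^{-2\beta}|y|^{-2\beta}$ against $|x-y|^{-N-ps}|x|^{-\beta}|y|^{-\beta}$, and the needed inequality $|x-y|^{N+ps}\le C\,\Psi\,|x|^{\beta}|y|^{\beta}$ fails as one variable approaches $0$). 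So this is a genuine gap, not bookkeeping left to the reader.

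Worse, the uniformity you claim for that missing step --- that the summation ``absorbs the normalization $1/\Psi$'' and closes with a constant depending only on $N,p,s,\beta$ because $2\beta<N-ps$ --- cannot be true. Take $\beta=0$ for clarity: let $w$ be a smooth bump of height $A$, equal to $A$ on $B_\epsilon$ and supported in $B_{2\epsilon}$, and let $\psi$ be radial decreasing with $\psi\equiv1$ on $B_\epsilon$ and $\psi\equiv\delta$ on essentially the rest of its support, with $\delta\ll\epsilon^{N}$. Every pair with $w(x)\neq w(y)$ has a point outside $B_\epsilon$, so $\min\{\psi(x),\psi(y)\}\le\delta$ on all contributing pairs and the right-hand side is of order $\delta A^p\epsilon^{N-ps}$; on the other hand $\Psi\simeq\epsilon^{N}$, hence $W_\psi\simeq A$, and the plateau $\{\psi=\delta\}$ alone gives a left-hand side of order $\delta A^p$. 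Letting $\epsilon\to0$ kills any constant independent of $\psi$; the same construction with the weights (taking $\delta\ll\mu(B_\epsilon)\simeq\epsilon^{N-2\beta}$) gives $\delta A^p$ versus $\delta A^p\epsilon^{N-2\beta-ps}$. So any correct completion of your scheme must produce a constant depending on the profile of $\psi$ (through $\Psi$, or ratios of the $\mu$-measures of its level sets) --- harmless for the use in Lemma \ref{lema1}, where $\psi$ is a fixed rescaled profile, but it is precisely the dependence your ``Schur-type summation'' purports to remove. As written, the decisive estimate is both missing and, in the uniform form you announce, false.
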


Let us begin by proving the next lemma.
\begin{Lemma}\label{lema1}
Assume that $v\in W^{s,p}_\b(\ren)$ with $v\gvertneqq 0$, is a
supersolution to \eqref{eq:def}. Let $k>0$ and suppose that for
some $\sigma\in (0,1]$, we have
\begin{equation}\label{elli1}
|B_r\cap \{v\ge k\}|_{d\mu}\ge \sigma|B_r|_{d\mu}
\end{equation}
with $0<r<\frac{R}{16}$. Then there exists a positive constant
$C=C(N,s)$ such that
\begin{equation}\label{elli2}
|B_{6r}\cap \{v\le 2\delta k\}|_{d\mu}\le
\frac{C}{\sigma\log(\frac{1}{2\delta})}|B_{6r}|_{d\mu}
\end{equation}
for all $\delta\in (0,\frac 14)$.
\end{Lemma}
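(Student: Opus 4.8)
The strategy is to adapt to the weighted setting the logarithmic estimate of \cite{CKP} and then to conclude by means of the weighted Poincar\'e--Wirtinger inequality of Theorem \ref{PW}. Since \eqref{elli2} is trivially true whenever $\frac{C}{\sigma\log(1/2\delta)}\ge 1$, we may assume $\delta$ small, so that $\log(1/2\delta)$ is large. Fix $d=\delta k$ and set $\bar v:=v+d$, so that $\bar v\ge d>0$ and $\log\bar v$ is well defined and lies in $W^{s,p}_\b$ locally (because $t\mapsto\log t$ is Lipschitz on $[d,\infty)$). The hypothesis $v\ge 0$ will be used to control the nonlocal tail; note also that all balls up to $B_{16r}$ are contained in $\Omega$ because $r<R/16$.

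\emph{Step 1: a logarithmic energy estimate.} I would first prove that
$$
\iint_{B_{12r}\times B_{12r}}\Bigl|\log\frac{\bar v(x)}{\bar v(y)}\Bigr|^{p}\,d\nu\ \le\ C\,r^{\,N-ps-2\b},\qquad C=C(N,p,s,\b).
$$
To this end pick a radial cut-off $\psi$ with $0\le\psi\le1$, $\psi\equiv1$ on $B_{12r}$, supported in $B_{15r}$, with $|\nabla\psi|\le C/r$, and use $\varphi:=\psi^{p}\,\bar v^{\,1-p}$, which is nonnegative, bounded by $d^{1-p}$, and belongs to $W^{s,p}_{\b,0}(B_{16r})$, hence is admissible in \eqref{super} with $\Omega_1=B_{16r}$. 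Testing \eqref{super} with $\varphi$ and discarding its right-hand side (which is $\ge 0$ since $f\ge0$ and $\varphi\ge0$), I split $\iint_{D_{B_{16r}}}$ into the local part over $B_{16r}\times B_{16r}$ and the tail over $B_{16r}\times(\ren\setminus B_{16r})$ (counted twice, by symmetry). On the local part I invoke the elementary inequality, proved as in \cite{CKP} via the substitution $t=a/b$: there are $c_1>0$ and $c_2\ge1$, depending only on $p$, such that for all $a,b>0$ and $\tau_1,\tau_2\ge0$,
$$
|a-b|^{p-2}(a-b)\Bigl(\frac{\tau_1^{p}}{a^{p-1}}-\frac{\tau_2^{p}}{b^{p-1}}\Bigr)\ \le\ c_2|\tau_1-\tau_2|^{p}-c_1\Bigl|\log\frac ab\Bigr|^{p}\bigl(\tau_1^{p}+\tau_2^{p}\bigr),
$$
applied with $a,b=\bar v(x),\bar v(y)$ and $\tau_1,\tau_2=\psi(x),\psi(y)$; the term $c_2\iint|\psi(x)-\psi(y)|^{p}\,d\nu\le Cr^{\,N-ps-2\b}$ is a standard cut-off estimate (using homogeneity of the weights under dilations). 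For the tail, $\varphi(y)=0$ and, since $v\ge0$, $|v(x)-v(y)|^{p-2}(v(x)-v(y))\le\bar v(x)^{p-1}$, so the tail is bounded by $C\int_{B_{15r}}\frac{\psi(x)^{p}}{|x|^{\b}}\bigl(\int_{\ren\setminus B_{16r}}\frac{dy}{|x-y|^{N+ps}|y|^{\b}}\bigr)dx\le Cr^{\,N-ps-2\b}$, since $|x-y|\ge r$ for $x\in B_{15r}$, $y\notin B_{16r}$, and $|x-y|\ge\tfrac12|y|$ when $|y|$ is large. Putting these estimates together and using $\psi\equiv1$ on $B_{12r}$ yields the claimed bound.

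\emph{Steps 2--3: oscillation bound and conclusion.} Applying the (rescaled) weighted Poincar\'e--Wirtinger inequality of Theorem \ref{PW} on $B_{6r}\subset B_{12r}$ to $w=\log\bar v$ and using the homogeneity of $d\mu$ and $d\nu$, one obtains
$$
\int_{B_{6r}}\bigl|\log\bar v-W\bigr|^{p}\,d\mu\ \le\ C\,(12r)^{ps}\iint_{B_{12r}\times B_{12r}}\bigl|\log\bar v(x)-\log\bar v(y)\bigr|^{p}\,d\nu\ \le\ C\,|B_{6r}|_{d\mu},
$$
where $W$ is a ($d\mu$-weighted) mean of $\log\bar v$; Chebyshev's inequality then gives $|B_{6r}\cap\{|\log\bar v-W|\ge t\}|_{d\mu}\le Ct^{-p}|B_{6r}|_{d\mu}$ for every $t>0$. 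Now $\bar v\ge(1+\delta)k$ on $\{v\ge k\}$ and $\bar v\le3\delta k$ on $\{v\le2\delta k\}$; put $\Delta:=\log\frac{1+\delta}{3\delta}$ (so $\Delta\ge c\log\frac1{2\delta}$, and $\Delta$ is bounded below by a positive constant), and let $m$ be the midpoint of $\log((1+\delta)k)$ and $\log(3\delta k)$. If $W>m$, then $W-\log\bar v\ge\Delta/2$ on $B_{6r}\cap\{v\le2\delta k\}$, and Chebyshev gives directly $|B_{6r}\cap\{v\le2\delta k\}|_{d\mu}\le C\Delta^{-p}|B_{6r}|_{d\mu}\le\frac{C}{\sigma\Delta}|B_{6r}|_{d\mu}$ (using $\sigma\le1$ and $\Delta$ bounded below). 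If $W\le m$, then $\log\bar v-W\ge\Delta/2$ on $B_r\cap\{v\ge k\}$, so combining the hypothesis \eqref{elli1} and $|B_r|_{d\mu}=6^{-(N-2\b)}|B_{6r}|_{d\mu}$ with Chebyshev forces $\sigma\Delta^{p}\le C$; hence $\frac{C'}{\sigma\Delta}\ge c\Delta^{p-1}\ge c'>0$, and \eqref{elli2} holds in this case because its left side never exceeds $|B_{6r}|_{d\mu}$. Replacing $\Delta$ by $\log\frac1{2\delta}$ and adjusting the constant gives \eqref{elli2}.

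The crux is Step 1: one must choose the logarithmic test function correctly, verify its admissibility, establish the pointwise algebraic inequality for general $p>1$, and, above all, control the nonlocal tail term — here it is decisive that $v\ge0$ throughout $\ren$, so that no unfavorable ``$v_-$''-tail appears and the tail contribution is harmless. The weighted structure enters only through the homogeneity of the weights under dilations and through Theorem \ref{PW}, and causes no essential difficulty beyond bookkeeping of exponents.
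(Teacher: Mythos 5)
Your Step 1 is essentially the paper's own argument: the paper also tests \eqref{super} with $\psi^p v^{1-p}$ (assuming w.l.o.g. $v>0$ and letting $\e\to0$; your shift $\bar v=v+\delta k$ is just a concrete way of implementing that regularization), bounds the nonlocal tail by $\iint \psi^p(x)\,d\nu\le Cr^{N-ps-2\b}$ using $v\ge 0$ exactly as you do, and invokes the logarithmic inequality of \cite{CKP1} to reach the estimate $\iint_{B_{6r}\times B_{6r}}|\log v(x)-\log v(y)|^p\,d\nu\le Cr^{N-ps-2\b}$. One correction there: the pointwise inequality you state, with the factor $\tau_1^p+\tau_2^p$ multiplying the logarithmic term, is false as written (take $\tau_1=1$, $\tau_2=0$ and $a/b\to\infty$: the left side stays bounded while your right side tends to $-\infty$); the correct version of the lemma in \cite{CKP1} carries $\min\{\tau_1,\tau_2\}^p$. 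This is harmless for you, since you only retain the logarithmic term on $B_{12r}\times B_{12r}$ where $\psi\equiv1$, so the $\min$-version gives the same bound, but it should be stated correctly (the paper's own quotation, with $\psi(y)^p$, is imprecise in the same way). Where you genuinely deviate is in the conclusion. The paper applies the weighted Poincar\'e--Wirtinger inequality of Theorem \ref{PW} to the \emph{truncated} logarithm $w=\min\{\log\frac1{2\delta},\log\frac kv\}_+$, obtains the $L^1$ oscillation bound $\int_{B_{6r}}|w-\langle w\rangle_{B_{6r}}|\,d\mu\le C|B_{6r}|_{d\mu}$, and uses that $\{w=0\}=\{v\ge k\}$ fills a $\sigma\,6^{-(N-2\b)}$-fraction of $B_{6r}$ to bound the measure of $\{w=\log\frac1{2\delta}\}=\{v\le 2\delta k\}$; this yields the factor $\frac1{\sigma\log(1/2\delta)}$ in one stroke. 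You instead keep the full $\log\bar v$, use the $L^p$ Poincar\'e bound plus Chebyshev at level $\Delta/2$, $\Delta=\log\frac{1+\delta}{3\delta}$, and split according to the position of the weighted mean: in one case you get the (stronger) decay $\Delta^{-p}$ and absorb $\sigma\le1$, in the other the hypothesis \eqref{elli1} together with the homogeneity identity $|B_{6r}|_{d\mu}=6^{N-2\b}|B_r|_{d\mu}$ (legitimate since the paper reduces to balls centred at the origin) forces $\sigma\Delta^p\le C$, making \eqref{elli2} trivial. This case analysis is correct, using $p>1$ and $\Delta\ge\Delta_0>0$ to convert $\Delta^{-p}$ into $\Delta^{-1}$; the paper's truncation route is the more standard one and produces the $1/\log$ factor directly with its $\sigma$-dependence, while yours trades that for a two-case argument but avoids truncating the logarithm.
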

\begin{pf} Without loss of generality we can assume that $v>0$ in
$B_R$, (if not we can deal with $v+\e$ and we let $\e\to 0$.) Let
$\psi\in \mathcal{C}^\infty_0(B_R)$ be such that $0\le \psi\le 1$,
$\text{supp}\: \psi\subset B_{7r}$, $\psi=1$ in $B_{6r}$ and
$|\nabla \psi|\le \frac{C}{r}$.

Putting $\varphi=\psi^{p}v^{1-p}$ as a test function in \eqref{super}, it
follows that
$$
\int_{\mathbb{R}^{N}}\int_{\mathbb{R}^{N}}
|v(x)-v(y)|^{p-2}(v(x)-v(y))(\psi^{p}(x)v^{1-p}(x)-\psi^{p}(y)v^{1-p}(y)) d\nu\ge 0.
$$
Thus
\begin{eqnarray*}
& 0\dyle \le \int_{B_{8r}}\int_{B_{8r}}
|v(x)-v(y)|^{p-2}(v(x)-v(y))(\frac{\psi^{p}(x)}{v(x)^{p-1}}-\frac{\psi^{p}(y)}{v(y)^{p-1}})
d\nu
\end{eqnarray*}
$$+ 2\int_{\ren \setminus
B_{8r}}\int_{B_{8r}}|v(x)-v(y)|^{p-2}(v(x)-v(y))\frac{\psi^{p}(x)}{v(x)^{p-1}}d\nu.$$

It is not difficult to show that
$$
\dyle \int_{\ren \setminus
B_{8r}}\int_{B_{8r}}|v(x)-v(y)|^{p-2}(v(x)-v(y))\frac{\psi^{p}(x)}{v(x)^{p-1}}d\nu\le
\dyle\int_{\ren \setminus B_{8r}}\int_{B_{8r}}\psi^{p}(x)d\nu.
$$
Since
$$
\begin{array}{lll}
\dyle\int_{\ren \setminus B_{8r}}\int_{B_{8r}}\psi^{p}(x)d\nu & = & \dyle  \dint_{B_{7r}}
\frac{\psi^p(x)}{|x|^{\beta}}\int_{8r}^{\infty}
\dfrac{\rho^{N-\beta-1}d\rho}{|x|^{N+ps}}
\Big(\dint_{{\mathbb{S}}^{N-1}}\dfrac{dy'}{|\frac{\rho}{|x|}y'-x'|^{N+ps}}\Big)dx,
\end{array}
$$
setting $\tau=\dfrac{\rho}{|x|}$, it follows that
$$
\dyle\int_{\ren \setminus B_{8r}}\int_{B_{8r}}\psi^{p}(x)d\nu=\dint_{B_{7r}}\frac{\psi^{p}(x)dx}{|x|^{2\beta+ps}}\int_{\frac
87}^{\infty} \tau^{N-\beta-1}D(\tau)d\tau,
$$
where
\begin{equation*}
D(\tau)=2\frac{\pi^{\frac{N-1}{2}}}{\beta(\frac{N-1}{2})}\int_0^\pi
\frac{\sin^{N-2}(\theta)}{(1-2\sigma \cos
(\theta)+\tau^2)^{\frac{N+ps}{2}}}d\theta.
\end{equation*}
Taking in consideration the behavior of $D$ near $0, 1$ and
$\infty$, we obtain that
$$
\int_{\frac 87}^{\infty} \tau^{N-\beta-1}D(\tau)d\tau\le C.
$$
Therefore we conclude that
$$
\int_{\ren \setminus
B_{8r}}\int_{B_{8r}}|v(x)-v(y)|^{p-2}(v(x)-v(y))\frac{\psi^{p}(x)}{v(x)^{p-1}}d\nu\le
Cr^{N-ps-2\beta}.
$$
Notice that from \cite{CKP1}, we know that
\begin{equation*}
\begin{split}
&|v(x)-v(y)|^{p-2}(v(x)-v(y))(\frac{\psi^{p}(x)}{v(x)^{p-1}}-\frac{\psi^{p}(y)}{v(y)^{p-1}})\\
&\le
-C_1|\log(v(x))-\log(v(y))|^p \psi(y)^p+C_2(\psi(x)- \psi(y))^p.
\end{split}
\end{equation*}
Thus
$$
\begin{array}{lll}
&\dyle \int_{B_{8r}}\int_{B_{8r}}
|v(x)-v(y)|^{p-2}(v(x)-v(y))(\frac{\psi^{p}(x)}{v(x)^{p-1}}-\frac{\psi^{p}(y)}{v(y)^{p-1}})
d\nu\\
&&\\&=\dyle \int_{B_{6r}}\int_{B_{6r}}
|v(x)-v(y)|^{p-2}(v(x)-v(y))(\frac{1}{v(x)^{p-1}}-\frac{1}{v(y)^{p-1}}) d\nu\\ &&\\ &+ \dyle
\iint_{B_{8r}\times B_{8r} \setminus B_{6r}\times
B_{6r}}|v(x)-v(y)|^{p-2}(v(x)-v(y))(\frac{\psi^{p}(x)}{v(x)^{p-1}}-\frac{\psi^{p}(y)}{v(y)^{p-1}})d\nu\\
&&\\
&\dyle \le \int_{B_{6r}}\int_{B_{6r}}
|v(x)-v(y)|^{p-2}(v(x)-v(y))(\frac{1}{v(x)^{p-1}}-\frac{1}{v(y)^{p-1}}) d\nu+Cr^{N-ps-2\beta}.
\end{array}
$$
Hence combining the above estimates it follows that
\begin{equation}\label{elli3}
\int_{B_{6r}}\int_{B_{6r}} |\log(v(x))-\log(v(y))|^pd\nu\le
Cr^{N-ps-2\beta}.
\end{equation}
We set $w(x)=\min\{\log(\frac{1}{2\delta}),
\log(\frac{k}{v})\}_+$, then using \eqref{elli3}, there results
that
\begin{equation}\label{elli4}
\int_{B_{6r}}\int_{B_{6r}} |w(x)-w(y)|^pd\nu\le Cr^{N-ps-2\beta}.
\end{equation}
Define $$\langle
w\rangle_{B_{6r}}=\dfrac{1}{|B_{6r}|_{d\mu}}\int_{B_{6r}}w(x)d\mu,$$
then using H\"older inequality and the Poincar\'{e}-Wirtinger inequality,
$$
\int_{B_{6r}}|w(x)-\langle w\rangle_{B_{6r}}|d\mu\le
C|B_{6r}|_{d\mu}.
$$
Notice that $\{x\in\Omega/\,w(x)=0\}=\{x\in\Omega/ v(x)\ge k\}$, then from \eqref{elli1} we have
\begin{equation}\label{elli111}
|B_{6r}\cap \{v\ge k\}|_{d\mu}\le
\dfrac{\sigma}{6^{N-2\beta}}|B_{6r}|_{d\mu}.
\end{equation}
It is clear that
$$
B_{6r}\cap \{v\ge 2\delta k\}=B_{6r}\cap
\{w=\log(\frac{1}{2\delta})k\},
$$
then using the fact that
$$
|B_{6r}\cap \{w=\log(\frac{1}{2\delta})k\}|_{d\mu}\le
\dfrac{6^{N-2\beta}}{\sigma
\log(\frac{1}{2\delta})}\int_{B_{6r}}|w(x)-\langle
w\rangle_{B_{6r}}|d\mu,
$$
we get the desired result.
 \end{pf}

As a consequence we have the next estimate on $\inf\limits_{B_{4r}} v$.
\begin{Lemma}\label{lema2}
Assume that the hypotheses of Lemma \ref{lema1} are satisfied,
then there exists $\delta \in (0,\frac 12)$ such that
\begin{equation}\label{estim2} \inf_{B_{4r}} v\ge \delta k.
\end{equation}
\end{Lemma}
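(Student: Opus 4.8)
The plan is a De Giorgi type iteration on a shrinking family of truncations of $v$, using Lemma \ref{lema1} only to guarantee that the initial ``bad set'' is small. As in the proof of Lemma \ref{lema1} I would first reduce to $v>0$ in $B_R$ (argue with $v+\eta$, $\eta>0$, and let $\eta\to 0$ at the end, which is legitimate since the $\delta$ produced below will not depend on $\eta$). Fix a number $\varepsilon_0>0$, to be specified only at the very end and depending only on $N,s,p,\beta$. By Lemma \ref{lema1} one can choose $\delta=\delta(N,s,p,\beta,\sigma,\varepsilon_0)\in(0,\tfrac14)$ so small that
$$
|B_{6r}\cap\{v\le 2\delta k\}|_{d\mu}\le \varepsilon_0\,|B_{6r}|_{d\mu}.
$$

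For $j\ge 0$ set $\rho_j=4r\big(1+\tfrac12 2^{-j}\big)$ (so $\rho_0=6r$ and $\rho_j\downarrow 4r$), $\ell_j=\delta k\big(1+2^{-j}\big)$ (so $\ell_0=2\delta k$ and $\ell_j\downarrow\delta k$), $w_j=(\ell_j-v)_+$ and $A_j=B_{\rho_j}\cap\{v<\ell_j\}$, and pick cut-offs $\psi_j\in \mathcal{C}^\infty_0\big(B_{(\rho_j+\rho_{j+1})/2}\big)$ with $0\le\psi_j\le1$, $\psi_j\equiv1$ on $B_{\rho_{j+1}}$ and $|\nabla\psi_j|\le C\,2^j/r$. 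Testing \eqref{super} on $\Omega_1=B_{\rho_j}$ with the admissible nonnegative function $\varphi_j=w_j\psi_j^p$, the datum term has the favourable sign (since $f\ge 0$, $\varphi_j\ge 0$) and drops out, and, using that $v(x)-v(y)$ and $w_j(x)-w_j(y)$ have opposite signs, one reaches — by repeating the weighted kernel computations (polar coordinates and the bounds on $K(\sigma)$, $D(\tau)$) already used in the proof of Lemma \ref{lema1} and in \cite{CKP,AMPP2} — the Caccioppoli estimate
$$
\iint_{B_{\rho_j}\times B_{\rho_j}}\big|w_j(x)\psi_j(x)-w_j(y)\psi_j(y)\big|^p\,d\nu\le C\,2^{jp}(\delta k)^p\,r^{-ps}\,|A_j|_{d\mu},
$$
where the nonlocal tail from $\ren\setminus B_{\rho_j}$ is absorbed into the right-hand side because $0\le w_j\le\ell_j\le 2\delta k$ everywhere and $\dint_{\ren\setminus B_{\rho_j}}|y|^{-N-ps-\beta}\,dy\le C\,\rho_j^{-ps-\beta}$.

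Next I would apply the weighted fractional Sobolev inequality of Theorem \ref{Sobolev} to $w_j\psi_j$, use that $w_j\psi_j\ge \ell_j-\ell_{j+1}=2^{-j-1}\delta k$ on $A_{j+1}$ and the elementary inequality $|x|^{-2\beta p^*_s/p}\ge (6r)^{-2\beta ps/(N-ps)}|x|^{-2\beta}$ valid on $B_{6r}$, to get
$$
(2^{-j-1}\delta k)^p\,(6r)^{-\frac{2\beta ps}{N}}\,|A_{j+1}|_{d\mu}^{p/p^*_s}\le C\,2^{jp}(\delta k)^p\,r^{-ps}\,|A_j|_{d\mu}.
$$
Dividing by $|B_{\rho_{j+1}}|_{d\mu}\simeq r^{N-2\beta}$ and writing $Y_j=|A_j|_{d\mu}/|B_{\rho_j}|_{d\mu}$, all powers of $r$ cancel (as forced by scaling) and one is left with a recursion
$$
Y_{j+1}\le C\,b^{\,j}\,Y_j^{\,1+\kappa},\qquad b>1,\quad \kappa=\frac{ps}{N-ps}>0,
$$
with $C,b,\kappa$ depending only on $N,s,p,\beta$. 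By a classical fast geometric convergence lemma (see, e.g., \cite{St}) there is a constant $c_*=c_*(C,b,\kappa)>0$ such that $Y_0\le c_*$ forces $Y_j\to 0$. It then suffices to take $\varepsilon_0=c_*$ in the first paragraph: since $Y_0=|B_{6r}\cap\{v<2\delta k\}|_{d\mu}/|B_{6r}|_{d\mu}\le\varepsilon_0=c_*$, we get $|A_j|_{d\mu}\to 0$, hence $|B_{4r}\cap\{v<\delta k\}|_{d\mu}=0$, i.e.\ $v\ge\delta k$ a.e.\ in $B_{4r}$; letting $\eta\to 0$ yields $\inf_{B_{4r}}v\ge\delta k$ with $\delta\in(0,\tfrac14)\subset(0,\tfrac12)$.

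I expect the main obstacle to be the Caccioppoli step together with the bookkeeping of the two weights $|x|^{-2\beta}$ and $|x|^{-2\beta p^*_s/p}$: one must expand $w_j\psi_j(x)-w_j\psi_j(y)$, control the cross term $\max\{w_j(x),w_j(y)\}^p|\psi_j(x)-\psi_j(y)|^p$ and the long-range tail while keeping every contribution of the normalized form $2^{jp}(\delta k)^p r^{-ps}|A_j|_{d\mu}$, and then verify that after inserting Theorem \ref{Sobolev} all powers of $r$ disappear, so that the recursion closes with an exponent $1+\kappa>1$ that is independent of $r$ and of $\sigma$. This is precisely the point at which the weighted techniques of \cite{CKP} and \cite{AMPP2} must be combined.
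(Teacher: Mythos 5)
Your proposal is correct and follows essentially the same route as the paper: a De Giorgi iteration with the shrinking levels $\ell_j\downarrow\delta k$ and radii $\rho_j\downarrow 4r$, the test functions $w_j\psi_j^p$ giving a Caccioppoli estimate with a nonlocal tail, the weighted Sobolev inequality of Theorem \ref{Sobolev} together with the comparison of $|x|^{-2\beta p^*_s/p}$ with $r^{-2\beta(p^*_s-p)/p}|x|^{-2\beta}$ on the ball, and Lemma \ref{lema1} used to make the initial ratio small by choosing $\delta$. The only difference is that you spell out the final fast geometric convergence step (and the cancellation of the powers of $r$) which the paper delegates to ``the same arguments as in \cite{CKP}''.
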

\begin{pf} We set $w=(l-v)_-$ where $l\in (\delta k,2\delta k)$ and let
$\psi\in \mathcal{C}^\infty_0(B_\rho)$ with $r\le \rho<6r$.

Putting  $\varphi=w\psi^p$ as a test function in \eqref{super} and following
the same computation as in the previous lemma, we reach that
\begin{equation*}
\begin{split}
&\int_{B_{\rho}}\int_{B_{\rho}}
|v(x)-v(y)|^{p-2}(v(x)-v(y))(w(x)\psi^{p}(x)-w(y)\psi^{p}(y))
d\nu\\
&\le -c \int_{B_{\rho}}\int_{B_\rho} |w(x)\psi(x)-w(y)\psi(y)|^p
d\nu\\
&+c \int_{B_{\rho}}\int_{B_\rho}((max\{w(x),w(y)\}))^p |\psi(x)-\psi(y)|^p
d\nu.
\end{split}
\end{equation*}
Thus, combining the above results, we get
\begin{equation}\label{last}
\begin{array}{lll}
&\dyle \int_{B_{\rho}}\int_{B_\rho} |w(x)\psi(x)-w(y)\psi(y)|^p
d\nu \le \\& C_1\dyle
\int_{B_{\rho}}\int_{B_\rho}((max\{w(x),w(y)\}))^p |\psi(x)-\psi(y)|^pd\nu +
l^p|B_\rho\cap \{v<l\}|_{d\mu}\times \sup_{\{x\in
\text{supp}\,\,\rho\}}\int_{\ren\backslash
B_{\rho}}\dfrac{dy}{|x-y|^{N+ps}}.
\end{array}
\end{equation}
We define now the sequences $\{l_j\}_{j\in\mathbb{N}}$,
$\{\rho_j\}_{j\in\mathbb{N}}$ and
$\{\bar{\rho_j}\}_{j\in\mathbb{N}}$ by setting
$$
l_j=\delta k+2^{-j-1}\delta k,\:\rho_j=4r+2^{1-j}r,\:
\bar{\rho}_j=\frac{\rho_j+\rho_{j+1}}{2}.
$$
Using the Sobolev inequality stated in Theorem \ref{Sobolev},
we obtain that
$$
C(N,s,\beta)\Big(\dint\limits_{B_j}
\dfrac{|w_j\psi_j(x)|^{p^*_{s}}}{|x|^{\beta
p^*_s}}\,dx\Big)^{\frac{p}{p^*_{s}}}\le
\dint_{B_j}\dint_{B_j}|w_j(x)\psi_j(x)-w_j(y)\psi_j(y)|^pd\nu.
$$
Therefore, using the fact that $w_j\psi_j\ge (l_j-l_{j+1})$ in
$B_{j+1}\cap \{v<l_{j+1}\}$ and taking in consideration that
$|x|^{-p^*_s\beta}\ge \bar{C}r^{-(p^*_s-2)\beta}|x|^{-2\beta}$ in $B_j$
with $\bar{C}$ is independent of $j$, it follows that
$$
\Big(\dint\limits_{B_j} \dfrac{|w_j\phi(x)|^{p^*_{s}}}{|x|^{\beta
p^*_s}}\,dx\Big)^{\frac{p}{p^*_{s}}}\ge \frac{C}{r^{(p^*_s-2)\beta
}}(l_j-l_{j+1})^p|B_{j+1} \cap \{v<j+1\}|^{\frac{p}{p^*_s}}_{d\mu}.
$$
Hence we conclude that
$$
(l_j-l_{j+1})^p\Big(\dfrac{|B_{j+1} \cap
\{v<j+1\}|_{d\mu}}{|B_{j+1}|_{d\mu}}\Big)^{\frac{p}{p^*_s}}\le
C(N,s)r^{-(N-ps-2\beta)}\dint_{B_j}\dint_{B_j}|w_j(x)\phi(x)-w_j(y)\phi(y)|^pd\nu.
$$
By application of \eqref{last} for $w_j$, we get
\begin{equation}\label{last1}
\begin{split}
& (l_j-l_{j+1})^p\Big(\dfrac{|B_{j+1} \cap
\{v<j+1\}|_{d\mu}}{|B_{j+1}|_{d\mu}}\Big)^{\frac{p}{p^*_s}}\\
&\le
\dfrac{C(N,s)}{r^{(N-ps-2\beta)}}\Big(C_1\dyle
\int_{B_{\rho}}\int_{B_\rho}((max\{w_j(x),w_j(y)\}))^p |\psi_j(x)-\psi_j(y)|^pd\nu \\
&+l^p_j|B_j\cap \{v<l_j\}|_{d\mu}\times \sup_{\{x\in
\text{supp}\,\,\rho_j\}}\int_{\ren\backslash
B_{\rho_j}}\dfrac{dy}{|x-y|^{N+ps}}\Big).
\end{split}
\end{equation}
We have
$$
\begin{array}{lll}
\dyle \int_{B_{j}}\int_{B_j}w^p_j|\psi_j(x)-\psi_j(y)|^pd\nu &\le
& \dyle l^p\int_{B_{j}\cap
\{v<l_j\}}\dfrac{dx}{|x|^\beta}\int_{B_j}\dfrac{|x-y|^{p-ps}}{|x-y|^N}\dfrac{dy}{|y|^\beta}\\
&\le & \dyle Cr^{-ps}\int_{B_{j}\cap
\{v<l_j\}}\dfrac{dx}{|x|^{2\beta}}.
\end{array}
$$
Thus
$$
\dyle \int_{B_{j}}\int_{B_j}w^p_j|\psi_j(x)-\psi_j(y)|^pd\nu\le
Cr^{-ps}|B_{j}\cap \{v<l_j\}|_{d\mu}.
$$
Now, estimating the term $\sup_{\{x\in
\text{supp}\,\,\rho_j\}}\int_{\ren\backslash
B_{\rho_j}}\dfrac{dy}{|x-y|^{N+ps}}$ as in \cite{CKP} we obtain
that
$$
(l_j-l_{j+1})^p\Big(\dfrac{|B_{j+1} \cap
\{v<j+1\}|_{d\mu}}{|B_{j+1}|_{d\mu}}\Big)^{\frac{p}{p^*_s}}\le\\
\dfrac{C(N,s)}{r^{(N-ps-2\beta)}}r^{-ps}|B_{j}\cap
\{v<l_j\}|_{d\mu}\le \tilde{C}\dfrac{|B_{j} \cap
\{v<j\}|_{d\mu}}{|B_{j}|_{d\mu}}
$$
where $\tilde{C}\le C(R,N,s)2^{N+s+2}$.

Define $A_j=\dfrac{|B_{j} \cap \{v<j\}|_{d\mu}}{|B_{j}|_{d\mu}}$
and following the same arguments as in \cite{CKP}, we get the desired result.
\end{pf}

Now, we need to obtain a kind of \emph{reverse H$\ddot{o}$lder inequality }
for $v$. More precisely we have the following result.
\begin{Lemma}\label{dos}
Suppose that $v$ is a supersolution to \eqref{eq:def}, then for all
$0<\alpha_1<\alpha_2<\frac{N(p-1)}{N-ps}$, we have
\begin{equation}\label{est3}
\Big(\dfrac{1}{|B_{r}|_{d\mu}}\dint_{B_{r}}v^{\alpha_2}\,d\mu\Big)^{\frac{1}{\alpha_2}}
\le C \Big(\dfrac{1}{|B_{\frac 32
r}|_{d\mu}}\dint_{B_{\frac32
r}}v^{\alpha_1}\,d\mu\Big)^{\frac{1}{\alpha_1}}.
\end{equation}
\end{Lemma}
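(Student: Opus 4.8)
The plan is to establish \eqref{est3} by the Moser iteration scheme of \cite{CKP} and \cite{AMPP2}, using small powers of $v$ as test functions together with the weighted fractional Sobolev inequality of Theorem~\ref{Sobolev}. As in Lemmas~\ref{lema1} and \ref{lema2} we may assume $v>0$ in $B_R$ (otherwise work with $v+\e$ and let $\e\to0$ at the end); all integrals below are over balls centered at the origin. Fix radii $r\le\rho'<\rho\le\tfrac32 r$ and a cut-off $\psi\in\mathcal{C}^\infty_0(B_\rho)$ with $\psi\equiv1$ on $B_{\rho'}$, $0\le\psi\le1$, $|\nabla\psi|\le C/(\rho-\rho')$.

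\textbf{Step 1 (Caccioppoli inequality for powers of $v$).} For $\gamma\in(1-p,0)$ I would insert $\varphi=v^{\gamma}\psi^{p}$ (made admissible by a routine truncation of $v$) into \eqref{super}; since $f\ge0$ and $\varphi\ge0$, the datum term has the favourable sign and may be dropped. The local part is estimated by the pointwise algebraic inequality of \cite{CKP1} (the same one used for the borderline exponent in Lemma~\ref{lema1}): writing $g:=v^{\frac{p-1+\gamma}{p}}$,
\[
|v(x)-v(y)|^{p-2}(v(x)-v(y))\big(v(x)^{\gamma}\psi(x)^{p}-v(y)^{\gamma}\psi(y)^{p}\big)\le -c_1\,|g(x)\psi(x)-g(y)\psi(y)|^{p}+c_2\,(\max\{g(x),g(y)\})^{p}\,|\psi(x)-\psi(y)|^{p},
\]
while the interaction of $\varphi$ with $\ren\setminus B_\rho$ is controlled exactly as in Lemmas~\ref{lema1}--\ref{lema2}: one bounds $|v(x)-v(y)|^{p-2}(v(x)-v(y))v(x)^{\gamma}\le C\,v(x)^{p-1+\gamma}$ on $\{v(x)\ge v(y)\}$ (it is $\le0$ otherwise) and uses the finiteness of the corresponding $\tau$-integral, just as for $D(\tau)$ above. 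Setting $q:=p-1+\gamma\in(0,p-1)$ and using $|\psi(x)-\psi(y)|^{p}$-type bounds by $C(\rho-\rho')^{-ps}$, this produces
\[
\iint_{B_\rho\times B_\rho}|g(x)\psi(x)-g(y)\psi(y)|^{p}\,d\nu\le \frac{C}{(\rho-\rho')^{ps}}\int_{B_\rho}v^{q}\,d\mu ,\qquad C=C(N,s,\beta,q).
\]

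\textbf{Step 2 (gain of integrability and iteration).} Applying Theorem~\ref{Sobolev} to $g\psi$ and comparing the weight $|x|^{-2\beta p^*_s/p}$ appearing there with $d\mu$ on $B_\rho$ (using $2\beta\,p^*_s/p\ge2\beta$ and $|x|\le\tfrac32 r$, as in Lemma~\ref{lema2}), and recalling $|g\psi|^{p^*_s}=v^{q p^*_s/p}\psi^{p^*_s}$, $|g|^{p}=v^{q}$, one obtains the reverse Hölder estimate with gain factor $\lambda:=\frac{p^*_s}{p}=\frac{N}{N-ps}>1$,
\[
\Big(\frac{1}{|B_{\rho'}|_{d\mu}}\int_{B_{\rho'}}v^{q\lambda}\,d\mu\Big)^{\frac1{q\lambda}}\le \Big(\frac{C\,r^{ps}}{(\rho-\rho')^{ps}}\Big)^{\frac1q}\Big(\frac1{|B_\rho|_{d\mu}}\int_{B_\rho}v^{q}\,d\mu\Big)^{\frac1q},
\]
valid for every $q\in(0,p-1)$. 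Since $\sup_{0<q<p-1}q\lambda=(p-1)\lambda=\frac{(p-1)N}{N-ps}=p_1$, given $0<\alpha_1<\alpha_2<p_1$ one reaches $\alpha_2$ from $\alpha_1$ in finitely many applications of this estimate along a suitable geometric chain of exponents $\alpha_1=q_0<q_1<\cdots$, keeping every exponent used as an input strictly below $p-1$ (possible because $\alpha_2/\lambda<p-1$; exponents in $[p-1,p_1)$ are reached in one final step from an input just below $p-1$, after first passing to a larger $L^q$-ball if necessary), together with a shrinking chain of radii $r=r_m<\cdots<r_0=\tfrac32 r$ with $r_j-r_{j+1}\sim 2^{-j}r$. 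Absorbing the constants in the usual Moser manner, a trivial Hölder interpolation if the iteration overshoots, and finally letting $\e\to0$, one arrives at \eqref{est3}.

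\textbf{Main obstacle.} The delicate points are all caused by the weight. First, one must pass, in a scale-correct way, from the quantity $\int|g\psi|^{p^*_s}|x|^{-2\beta p^*_s/p}\,dx$ delivered by Theorem~\ref{Sobolev} to an integral against $d\mu=|x|^{-2\beta}\,dx$ on balls about the origin, and then keep track of the resulting powers of $r$ so that the final estimate is homogeneous of the right degree — precisely the bookkeeping already carried out in Lemma~\ref{lema2}, and also the place where $\beta\ge0$ and the localization are used. Second, the constants $c_1,c_2$ in the algebraic inequality and $C$ in the Caccioppoli estimate degenerate as $\gamma\to0^-$ and as $\gamma\to(1-p)^+$ (equivalently $q\to p-1$ and $q\to0$), so the number of iteration steps, the choice of input exponents and the geometric decay of the radii must be tuned in terms of $\alpha_1,\alpha_2$ to keep the product of constants finite; handling the iteration near the borderline exponent $q=p-1$ is the usual subtle part of the argument. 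Both difficulties are resolved by adapting, respectively, the weight estimates of \cite{AMPP2} and the iteration lemma of \cite{CKP}.
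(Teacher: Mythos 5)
Your proposal is correct and follows essentially the same route as the paper: the paper tests \eqref{super} with $\tilde v^{\,1-q}\psi^{p}$, $q\in(1,p)$, $\tilde v=v+d$ (your $\gamma=1-q\in(1-p,0)$ and regularization by $v+\e$ play the identical role), controls the local term by the algebraic inequality of \cite{CKP1} and the far-field term by the same tail estimate as in Lemmas \ref{lema1}--\ref{lema2}, then applies the weighted Sobolev inequality of Theorem \ref{Sobolev} and iterates over radii between $r$ and $\tfrac32 r$, exactly as you outline. Your one-step reverse H\"older inequality has the correct scaling in $r$, and your discussion of keeping the input exponents below $p-1$ only makes explicit what the paper compresses into ``letting $d\to0$ and iterating.''
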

\begin{pf}
Let $q\in (1,p)$ and $d>0$, we set $\tilde{v}=(v+d)$. Assume that
$\psi\in \mathcal{C}^\infty_0(B_R)$ is such that
$\text{Supp}\psi\subset B_{\tau r}, \psi=1$ in $B_{\tau' r}$ and
$|\nabla \psi|\le \frac{C}{(\tau-\tau')r}$ where $\frac 12 \le
\tau'<\tau<\frac 32$. Then using $\varphi=\tilde{v}^{1-q}\psi^p$ as a test
function in \eqref{super}, we obtain that
%\begin{eqnarray*}
%&
 $$ \int_{B_{r}}\int_{B_{r}}
|\tilde{v}(x)-\tilde{v}(y)|^{p-2}(\tilde{v}(x)-\tilde{v}(y))(\frac{\psi^{p}(x)}{\tilde{v}^{q-1}(x)}-\frac{\psi^{p}(y)}{\tilde{v}^{q-1}(y)})
d\nu$$
$$+ 2\int_{\ren \setminus
B_{r}}\int_{B_{r}}|\tilde{v}(x)-\tilde{v}(y)|^{p-2}(\tilde{v}(x)-\tilde{v}(y))\frac{\psi^{p}(x)}{\tilde{v}^{q-1}(x)}d\nu\ge 0.
$$
%\end{eqnarray*}
As in the proof of Lemma \ref{lema1}, we can prove that
$$
\int_{\ren \setminus
B_{r}}\int_{B_{r}}|\tilde{v}(x)-\tilde{v}(y)|^{p-2}(\tilde{v}(x)-\tilde{v}(y))\frac{\psi^{p}(x)}{\tilde{v}(x)^{q-1}}d\nu\le
C_1 \dint\limits_{B_r}\tilde{v}^{p-q}\psi^p\,d\mu \times
\sup_{\{x\in \text{Supp}\psi\}}\int_{\ren\backslash
B_{r}}\dfrac{dy}{|x-y|^{N+ps}}.
$$
In the same way, we get
$$
\begin{array}{lll}
&\dyle \int_{B_{r}}\int_{B_{r}}
|\tilde{v}(x)-\tilde{v}(y)|^{p-2}(\tilde{v}(x)-\tilde{v}(y))(\frac{\psi^{p}(x)}{\tilde{v}^{q-1}(x)}-\frac{\psi^{p}(y)}{\tilde{v}^{q-1}(y)})
d\nu\le\\
&\dyle -C_2\int_{B_{r}}\int_{B_{r}}
(\tilde{v}^{\frac{p-q}{p}}(x)-\tilde{v}^{\frac{p-q}{p}}(y))^p\psi^{p}(y)d\nu
+C_3\int_{B_{r}}\int_{B_{r}}
((\tilde{v}^{{p-q}}(x)+\tilde{v}^{p-q}(y))(\psi(x)-\psi(y))^p
d\nu.
\end{array}
$$
Since
$$
\int_{B_{r}}\int_{B_{r}}
((\tilde{v}^{{p-q}}(x)+\tilde{v}^{p-q}(y))|\psi(x)-\psi(y)|^p
d\nu\le \dfrac{Cr^{-ps}}{(\tau-\tau')^p}\dint\limits_{B_{\tau
r}}\tilde{v}^{p-q}\,d\mu,
$$
and
$$
\sup_{\{x\in \text{Supp}\psi\}}\int_{\ren\backslash
B_{r}}\dfrac{dy}{|x-y|^{N+ps}}\le Cr^{-ps},
$$
 then combining the above estimates we reach that
$$
\int_{B_{\rho}}\int_{B_\rho}((max\{w(x),w(y)\}))^p |\psi(x)-\psi(y)|^pd\nu
\le \dfrac{Cr^{-ps}}{(\tau-\tau')^p}\dint\limits_{B_{\tau
r}}\tilde{v}^{p-q}\,d\mu.
$$
Now using Sobolev inequality given in Proposition \ref{Sobolev}
and taking in consideration the previous estimates, there results
that
$$
\Big(\dfrac{1}{|B_{\tau' r}|_{d\mu}}\dint_{B_{\tau
r}}\tilde{v}^{\frac{(p-q)N}{N-ps}}\,d\mu\Big)^{\frac{N-ps}{N}} \le
\dfrac{C}{|B_{\tau r}|_{d\mu}(\tau-\tau')^p}\dint_{B_{\tau
r}}\tilde{v}^{p-q}\,d\mu.
$$
Now, letting $d\to 0$ and iterating the previous inequality, we
reach the desired result.
\end{pf}

The next Lemma will be the key in order to complete the proof of the weak Harnack inequality.
\begin{Lemma}\label{tres}
Assume that $v$ is a supersolution to \eqref{eq:def}, then there
exists $\eta\in (0,1)$ depending only on $N,s$ such that
\begin{equation}\label{est31}
\dyle\Big(\dfrac{1}{|B_r|_{d\mu}}\int_{B_r}v^{\eta}d\mu\Big)^{\frac{1}{\eta}}\le
C\inf_{B_r} v.
\end{equation}
\end{Lemma}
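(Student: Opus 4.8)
The plan is to derive \eqref{est31} from the nonlocal \emph{expansion of positivity} encoded in Lemmas \ref{lema1} and \ref{lema2}, by a Krylov--Safonov type covering iteration, exactly in the spirit of \cite{CKP} and \cite{AMPP2}; since $v\gneqq 0$ the negative part of $v$ vanishes, so no nonlocal tail term survives and the argument is morally local. As usual we may assume $v>0$ in a large ball $B_R$ (work with $v+\e$ and let $\e\to 0$ at the end), and we set $m=\inf_{B_r}v$, taking $r$ small relative to $R$ so that every dilated ball appearing below stays inside $B_R$ and meets the radius restriction of Lemma \ref{lema1}. The whole proof reduces to the distributional estimate
\begin{equation}\label{p:weak}
|B_r\cap\{v>t\}|_{d\mu}\le C\Big(\frac{m}{t}\Big)^{\gamma}|B_r|_{d\mu}\qquad\text{for all }t>0,
\end{equation}
for some $\gamma=\gamma(N,s,p,\beta)>0$ and $C\ge 1$. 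Granting \eqref{p:weak}, choosing $\eta=\gamma/2\in(0,1)$ and using the layer--cake formula gives
\begin{equation*}
\begin{split}
\frac{1}{|B_r|_{d\mu}}\dint_{B_r}v^{\eta}\,d\mu&=\eta\int_0^\infty t^{\eta-1}\,\frac{|B_r\cap\{v>t\}|_{d\mu}}{|B_r|_{d\mu}}\,dt\\
&\le \eta\int_0^{m}t^{\eta-1}\,dt+C\eta\, m^{\gamma}\int_{m}^{\infty}t^{\eta-1-\gamma}\,dt=C'm^{\eta},
\end{split}
\end{equation*}
which is \eqref{est31} with $C''=(C')^{1/\eta}$, and then $\e\to 0$.

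To prove \eqref{p:weak}, first record the ``one step'' of the expansion of positivity: applying Lemma \ref{lema1} with $\sigma=\tfrac12$, $k=t$, and then Lemma \ref{lema2}, there is a fixed $\delta_1=\delta_1(N,s,p,\beta)\in(0,\tfrac14)$ such that, for every admissible ball $B_\rho(x_0)$ with $B_{8\rho}(x_0)\subset B_R$,
\[
|B_\rho(x_0)\cap\{v>t\}|_{d\mu}\ge \tfrac12|B_\rho(x_0)|_{d\mu}\ \Longrightarrow\ B_{4\rho}(x_0)\subset\{v>\delta_1 t\}.
\]
Applied to $B_\rho(x_0)=B_r$ itself this already gives $t\le m/\delta_1$ whenever $|B_r\cap\{v>t\}|_{d\mu}>\tfrac12|B_r|_{d\mu}$, since then $m=\inf_{B_r}v\ge\inf_{B_{4r}}v\ge\delta_1 t$; hence $|B_r\cap\{v>t\}|_{d\mu}\le\tfrac12|B_r|_{d\mu}$ for all $t>m/\delta_1$. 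One now invokes the Krylov--Safonov ``growing ink--spots'' lemma for the doubling measure $d\mu$ with bad set $E=B_r\cap\{v>t\}$ and larger set $F=B_r\cap\{v>\delta_1 t\}$: the implication above shows that whenever $B_\rho(x_0)\subset B_r$ satisfies $|B_\rho(x_0)\cap E|_{d\mu}\ge\tfrac12|B_\rho(x_0)|_{d\mu}$ then $B_\rho(x_0)\subset F$, so for $t>m/\delta_1$ we get $|B_r\cap\{v>t\}|_{d\mu}\le\bar\e\,|B_r\cap\{v>\delta_1 t\}|_{d\mu}$ with $\bar\e=\bar\e(N,\beta)\in(0,1)$. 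Replacing $t$ by $t\delta_1^{-j}$ and iterating,
\[
|B_r\cap\{v>t\delta_1^{-j}\}|_{d\mu}\le\bar\e^{\,j}\,|B_r|_{d\mu}\qquad(t>m/\delta_1,\ j\ge 0),
\]
and comparing an arbitrary level $s\ge m/\delta_1$ with the geometric levels $m\delta_1^{-j}$ turns this into the power bound \eqref{p:weak} with $\gamma=\log(1/\bar\e)/\log(1/\delta_1)>0$ (for $s<m/\delta_1$ the estimate \eqref{p:weak} is trivial after enlarging $C$).

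The main obstacle is exactly the covering/iteration step: one must run the Krylov--Safonov ink--spots lemma in the weighted measure $d\mu$ — which is doubling precisely because $0\le 2\beta<N$ — and keep track of the geometry so that all the dilated balls $B_{4\rho}(x_0)$, $B_{6\rho}(x_0)$, $B_{8\rho}(x_0)$ required by Lemmas \ref{lema1}--\ref{lema2} remain admissible; this radius bookkeeping (together with the limit $\e\to0$) is the only delicate point, the rest being the soft layer--cake computation above. Note that Lemma \ref{dos} is not used here; it will instead be combined with \eqref{est31} to raise the exponent $\eta$ up to any $q<\frac{N(p-1)}{N-ps}$ in the proof of Theorem \ref{harnack}.
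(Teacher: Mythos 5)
Your proposal is correct and follows essentially the same route as the paper: expansion of positivity via Lemmas \ref{lema1}--\ref{lema2}, a Krylov--Safonov covering iteration in the doubling measure $d\mu$ yielding the power-type decay $|B_r\cap\{v>t\}|_{d\mu}\le C\,(\inf_{B_r}v/t)^{\gamma}|B_r|_{d\mu}$, and the layer-cake formula with a small exponent $\eta$. The only cosmetic difference is that you phrase the covering step as the growing ink-spots lemma comparing the levels $t$ and $\delta_1 t$, whereas the paper iterates the levels $t\delta^i$ through the dilation alternative of Lemma \ref{cover}; these are two equivalent formulations of the same iteration.
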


To prove Lemma \ref{tres} we need the next covering result, see
Lemma 4.1 in \cite{CKP}.
\begin{Lemma}\label{cover}
Assume that $E\subset B_r(x_0)$ is a measurable set. For
$\bar{\delta}\in (0,1)$, we define
$$
[E]_{\bar{\delta}}\equiv \bigcup_{\rho>0}\{B_{3\rho}(x)\cap
B_r(x_0), x\in B_r(x_0): |E\cap B_{3\rho}(x)|_{d\mu}>\bar{\delta}|
B_{\rho}(x)|_{d\mu}\}.
$$
Then, either
\begin{enumerate}
\item $
|[E]_{\bar{\delta}}|_{d\mu}\ge\frac{\tilde{C}}{\bar{\delta}}|E|_{d\mu}$,
or \item $ [E]_{\bar{\delta}}=B_r(x_0)$
\end{enumerate}
where $\tilde{C}$ depends only on $N$.
\end{Lemma}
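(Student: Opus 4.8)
The plan is to view Lemma~\ref{cover} as the classical Krylov--Safonov covering (or ``growth'') lemma, in the precise form used for Lemma~4.1 of \cite{CKP}, but now relative to the measure $d\mu=|x|^{-2\beta}dx$ instead of Lebesgue measure; write $\mu(A):=|A|_{d\mu}$. The one structural fact that must be recorded is that $d\mu$ is a \emph{doubling} measure on $\ren$: since $0\le 2\beta<N-ps<N$, the weight $|x|^{-2\beta}$ is a Muckenhoupt $A_1$ weight, hence there are constants $c_d,C_d$ depending only on $N$ and $\beta$ with $c_d^{\,c}\,\mu(B_{c\rho}(x))\le\mu(B_\rho(x))\le\mu(B_{c\rho}(x))\le C_d^{\,c}\,\mu(B_\rho(x))$ for every ball and every $c\ge 1$. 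With this, the Vitali ``$5r$''-covering lemma and the Lebesgue differentiation theorem are available with respect to $\mu$, all the ball ratios appearing below are pinched between such constants, and this is exactly why the resulting $\tilde C$ is purely geometric.

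Granting this, I would reproduce the \cite{CKP} argument almost verbatim. We may assume $\mu(E)>0$ and $\mu\big(B_r(x_0)\setminus E\big)>0$ (otherwise $[E]_{\bar\delta}=B_r(x_0)$, i.e.\ case~(2)), and that alternative~(2) fails; we must prove~(1). By the $\mu$-Lebesgue differentiation theorem, for $\mu$-a.e.\ $x\in E$ one has $\mu(E\cap B_\rho(x))/\mu(B_\rho(x))\to1$ as $\rho\to0$, so for all small $\rho$, $\mu(E\cap B_{3\rho}(x))\ge\mu(E\cap B_\rho(x))>\bar\delta\,\mu(B_\rho(x))$, whence $B_{3\rho}(x)\cap B_r(x_0)\subseteq[E]_{\bar\delta}$; letting $\rho\to0$ gives $x\in[E]_{\bar\delta}$, so $E\subseteq[E]_{\bar\delta}$ up to a $\mu$-null set. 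For each density point $x$ of $E$ set
\[
\rho_x:=\sup\big\{\rho>0:\ \mu(E\cap B_{3\rho}(x))>\bar\delta\,\mu(B_\rho(x))\big\}.
\]
Then $0<\rho_x<\infty$: positivity is the density estimate just used, finiteness holds because $\mu(B_\rho(x))\to\infty$ while $\mu(E\cap B_{3\rho}(x))\le\mu(E)$, so the defining inequality must fail for large $\rho$. Since spheres are $\mu$-null, $\rho\mapsto\mu(E\cap B_{3\rho}(x))$ and $\rho\mapsto\mu(B_\rho(x))$ are continuous; letting $\rho\uparrow\rho_x$ through admissible radii and $\rho\downarrow\rho_x$ through non-admissible ones yields $\mu(E\cap B_{3\rho_x}(x))\ge\bar\delta\,\mu(B_{\rho_x}(x))$ and, crucially, $\mu(E\cap B_{3t}(x))\le\bar\delta\,\mu(B_t(x))$ \emph{for every $t>\rho_x$}. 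Passing to the union of the admissible balls $B_{3\rho}(x)$, $\rho<\rho_x$, also gives $B_{3\rho_x}(x)\cap B_r(x_0)\subseteq[E]_{\bar\delta}$; and from $\mu(B_{\rho_x}(x))\le\bar\delta^{-1}\mu(E\cap B_{3\rho_x}(x))\le\bar\delta^{-1}\mu(E)<\mu(B_r(x_0))$ the balls $B_{\rho_x}(x)$ are small relative to $B_r(x_0)$, so by doubling $\mu\big(B_{3\rho_x}(x)\cap B_r(x_0)\big)\ge c_d\,\mu\big(B_{3\rho_x}(x)\big)$.

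The final step is the covering. The balls $\{B_{3\rho_x}(x)\}$, over density points $x$ of $E$, cover $E$ up to a $\mu$-null set and have uniformly bounded radii; by the Vitali covering lemma extract a countable \emph{disjoint} subfamily $\{B_{3\rho_{x_i}}(x_i)\}_i$ with $E\subseteq\bigcup_iB_{15\rho_{x_i}}(x_i)$ modulo a $\mu$-null set. Writing $\rho_i:=\rho_{x_i}$ and applying the ``for every $t>\rho_i$'' bound at $t=5\rho_i$, $\mu\big(E\cap B_{15\rho_i}(x_i)\big)\le\bar\delta\,\mu\big(B_{5\rho_i}(x_i)\big)\le C_d^{\,5}\bar\delta\,\mu\big(B_{\rho_i}(x_i)\big)$, hence
\[
\mu(E)\le\sum_i\mu\big(E\cap B_{15\rho_i}(x_i)\big)\le C_d^{\,5}\bar\delta\sum_i\mu\big(B_{\rho_i}(x_i)\big).
\]
On the other hand, using $B_{3\rho_i}(x_i)\cap B_r(x_0)\subseteq[E]_{\bar\delta}$, disjointness of the $B_{3\rho_i}(x_i)$, and the lower bound above,
\[
\mu\big([E]_{\bar\delta}\big)\ge\sum_i\mu\big(B_{3\rho_i}(x_i)\cap B_r(x_0)\big)\ge c_d\sum_i\mu\big(B_{3\rho_i}(x_i)\big)\ge c_d\sum_i\mu\big(B_{\rho_i}(x_i)\big)\ge\frac{c_d}{C_d^{\,5}}\,\frac{\mu(E)}{\bar\delta},
\]
which is~(1) with $\tilde C=c_d\,C_d^{-5}$. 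The only genuinely delicate points are the doubling statement for $d\mu$ (which is where the hypothesis $2\beta<N$ enters, and which makes all constants geometric) and the bookkeeping of the various dilation radii near $\partial B_r(x_0)$; both are handled exactly as in \cite{CKP}. This lemma is precisely the device that, iterated together with the single-scale expansion of positivity in Lemmas~\ref{lema1}--\ref{lema2}, produces the measure-decay estimate for $v$ needed to prove Lemma~\ref{tres}.
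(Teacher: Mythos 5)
The paper itself offers no proof of this lemma (it is quoted from Lemma 4.1 of \cite{CKP}), and your overall strategy --- the Krylov--Safonov stopping-radius argument run with respect to the doubling measure $d\mu=|x|^{-2\beta}dx$ (doubling because $0\le 2\beta<N-ps<N$), followed by a Vitali $5r$-covering --- is indeed the standard way to prove it. But there is one genuine gap, and it sits exactly where the dichotomy lives: you announce that alternative (2) fails and then never use that assumption. Instead, to control the stopping radii you write $\mu(B_{\rho_x}(x))\le \bar\delta^{-1}\mu\big(E\cap B_{3\rho_x}(x)\big)\le\bar\delta^{-1}\mu(E)<\mu(B_r(x_0))$. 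The last inequality is unjustified: it amounts to assuming $\mu(E)<\bar\delta\,\mu(B_r(x_0))$, which is not part of the hypotheses, and it cannot be dropped, because when $E$ occupies a large $\mu$-fraction of $B_r(x_0)$ one is not automatically in case (2) either (that only follows when $\mu(E)>\bar\delta\, C\,\mu(B_r(x_0))$ with $C$ a doubling constant). Everything downstream leans on this step: the uniform bound on the radii $\rho_x$ needed to invoke the Vitali covering lemma, and the lower bound $\mu\big(B_{3\rho_x}(x)\cap B_r(x_0)\big)\ge c_d\,\mu\big(B_{3\rho_x}(x)\big)$, which is false if $\rho_x\gg r$ (the intersection is then just $B_r(x_0)$, of much smaller measure) and would at best hold with a $\bar\delta$-dependent constant, destroying the claim that $\tilde C$ is purely geometric.

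The repair is the classical one and is exactly where the failure of (2) must enter: pick $y\in B_r(x_0)\setminus[E]_{\bar\delta}$. For any $x\in B_r(x_0)$ and any $\rho>|x-y|/3$ one has $y\in B_{3\rho}(x)\cap B_r(x_0)$, so the density condition must fail at that $\rho$ (otherwise $y\in[E]_{\bar\delta}$); hence $\rho_x\le|x-y|/3<\tfrac{2r}{3}$ for every density point $x$ of $E$. With this bound the radii are uniformly bounded by $r$, the intersection estimate $\mu\big(B_{3\rho_x}(x)\cap B_r(x_0)\big)\ge c\,\mu\big(B_{3\rho_x}(x)\big)$ holds with $c=c(N,\beta)$ by doubling, and the rest of your computation (the stopping inequality at $t=5\rho_i$, the disjointness of the $B_{3\rho_i}(x_i)$, and the summation giving $\tilde C=c_d C_d^{-5}$) goes through verbatim and yields (1). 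Two cosmetic remarks: the constant necessarily depends on $\beta$ as well as $N$ through the doubling constant of $|x|^{-2\beta}dx$ (the statement's ``depends only on $N$'' is inherited from the unweighted case in \cite{CKP}); and at the stopping radius you only need admissible radii $\rho_n\uparrow\rho_x$, not admissibility of all $\rho<\rho_x$, which is how the inclusion $B_{3\rho_x}(x)\cap B_r(x_0)\subset[E]_{\bar\delta}$ should be phrased.
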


{\bf Proof of Lemma \ref{tres}.}

Notice that, for any $\eta>0$,
\begin{equation}\label{rep}
\dyle\dfrac{1}{|B_r|_{d\mu}}\int_{B_r}v^{\eta}d\mu(x)=\eta
\int_0^\infty
t^{\eta-1}\dfrac{|B_r\cap\{v>t\}|_{d\mu}}{|B_r|_{d\mu}}dt.
\end{equation}
Then, for $t>0$ and $i\in \mathbb{N}$, we set $ A^i_t=\{x\in B_r:
v(x)>t\delta^i\}$ where $\delta $ is given by Lemma \ref{lema2}.
Notice that $A^{i-1}_t\subset A^i_t$.

Let $x\in B_r$ such that $B_{3\rho}(0)\cap B_r\subset
[A^{i-1}_t]_{\bar{\delta}}$, then
$$
|A^{i-1}_t\cap
B_{3\rho}(x)|_{d\mu}>\bar{\delta}|B_\rho|_{d\mu}=\frac{\bar{\delta}}{3^{N-2\beta}}|B_{3\rho}|_{d\mu}.
$$
Hence using Lemma \ref{lema2}, we reach that
$$
v(x)>\delta (t\delta^{i-1})=t\delta^i  \mbox{  for all }x\in B_r.
$$
Thus $[A^{i-1}_t]_{\bar{\delta}}\subset A^{i}_t$. Therefore, using
the alternative result in Lemma \ref{cover}, we obtain that,
either $A^{i}_t=B_r$ or $|A^{i}_t|_{d\mu}\ge
\frac{\tilde{C}}{\delta}|A^{i-1}_t|_{d\mu}$.

Thus, if for some $m\in \mathbb{N}$, we have
\begin{equation}\label{nesr}
|A^0_t|_{d\mu}>(\frac{\bar{\delta}}{\tilde{C}})^m|B_r|_{d\mu},
\end{equation}
then $|A^{m}_t|_{d\mu}= |B_r|_{d\mu}$. Hence $A^{i}_t=B_r$ and
then
$$
\inf_{B_r}v>t\delta^m.
$$
It is clear that \eqref{nesr} holds if
$m>\frac{1}{\log(\frac{\bar{\delta}}{\tilde{C}})}\log(\frac{|A^{0}_t|_{d\mu}}{|B_r|_{d\mu}})$.
Fixed $m$ as above and define
$\beta=\frac{\log(\frac{\bar{\delta}}{\tilde{C}})}{\log(\delta)}$,
it follows that
$$
\inf_{B_r}v>t\delta\Big(\frac{|A^{0}_t|_{d\mu}}{|B_r|_{d\mu}})^{\frac{1}{\beta}}.
$$
We set $\xi=\inf_{B_r}v$, then
$$
\dfrac{|B_r\cap\{v>t\}|_{d\mu}}{|B_r|_{d\mu}}=\frac{|A^{0}_t|_{d\mu}}{|B_r|_{d\mu}}\le
C\delta^{-\beta}t^{-\beta}\xi^\beta.
$$
Going back to \eqref{rep}, we have
$$
\dyle\dfrac{1}{|B_r|_{d\mu}}\int_{B_r}v^{\eta}d\mu(x)\le
\eta\int_0^at^{\eta-1}dt +\eta C\int_a^\infty
\delta^{-\beta}t^{-\beta}\xi^\beta dt.
$$
Choosing $a=\xi$ and $\eta=\frac{\beta}{2}$, we reach the desired
result.\cqd

We give now the proof of the weighted weak Harnack inequality.

{\bf Proof of Theorem \ref{harnack}.} Using Lemma \ref{tres} we
obtain that
$$
\dyle\Big(\dfrac{1}{|B_r|_{d\mu}}\int_{B_r}u^{\eta}d\mu(x)\Big)^{\frac{1}{\eta}}\le
C\inf_{B_r} u
$$
for some $\eta\in (0,1)$. Fixed $1\le q<\frac{N(p-1)}{N-ps}$, then by
Lemma \ref{dos} for $\alpha_1=\eta$ and $\alpha_2=q$, there
results that
\begin{equation}\label{est33}
\Big(\dfrac{1}{|B_{r}|_{d\mu}}\dint_{B_{
r}}u^{q}\,d\mu\Big)^{\frac{1}{q}} \le C \Big(\dfrac{1}{|B_{\frac
32 r}|_{d\mu}}\dint_{B_{\frac32
r}}u^{\eta}\,d\mu\Big)^{\frac{1}{\eta}}.
\end{equation}
Hence
$$
\Big(\dfrac{1}{|B_{r}|_{d\mu}}\dint\limits_{B_{
r}}u^{q}\,d\mu\Big)^{\frac{1}{q}} \le C\inf_{B_{\frac 32 r}} u
$$
and then we conclude. \cqd

\end{document}